\definecolor{dark green}{rgb}{0.09, 0.45, 0.27}
\newtheorem{theorem}{Theorem}[section]
\newtheorem{proposition}[theorem]{Proposition}
\newtheorem{lemma}[theorem]{Lemma}
\newtheorem{corollary}[theorem]{Corollary}
\newtheorem{definition}[theorem]{Definition}
\newtheorem{remark}[theorem]{Remark}
\newtheorem{example}[theorem]{Example}
\newtheorem{assumption}[theorem]{Assumption}
\newtheorem{algorithm}[theorem]{Algorithm}
\newcommand{\R}{\mathbb{R}}
\newcommand{\N}{\mathds{N}}
\newcommand{\coloneq}{\mathrel{\mathop:}=}
\tikzstyle{none}=[inner sep=0pt]
\tikzset{new/.style={thick}}
\definecolor{cccccc}{rgb}{0.8,0.8,0.8}
\definecolor{azure}{rgb}{0.0, 0.5, 1.0}
\definecolor{cqcqcq}{rgb}{0.75,0.75,0.75}
\definecolor{ao}{rgb}{0.0, 0.5, 0.0}
\definecolor{amber}{rgb}{1.0, 0.49, 0.0}
\definecolor{babyblue}{rgb}{0.54, 0.81, 0.94}
\definecolor{darkpastelgreen}{rgb}{0.01, 0.75, 0.24}
\definecolor{darkspringgreen}{rgb}{0.09, 0.45, 0.27}
\begin{document}
\title[Continuity of martingale optimal transport]{Continuity of  the martingale optimal transport problem on the real line}

\date{\today}

\author{Johannes Wiesel}

\address{
Johannes Wiesel\newline
Columbia University\newline
Department of Statistics\newline
1255 Amsterdam Avenue\newline
New York, NY 10027, USA}
\email{johannes.wiesel@columbia.edu}

\keywords{stability, martingale transport, causal transport, weak adapted topology, robust finance.}

\begin{abstract}
We show continuity of the martingale optimal transport optimisation problem as a functional of its marginals. This is achieved via an estimate on the projection in the nested/causal Wasserstein distance  of an arbitrary coupling on to the set of martingale couplings with the same marginals. As a corollary we obtain an independent proof of sufficiency of the monotonicity principle established in [Beiglb\"ock, M., \& Juillet, N. (2016). On a problem of optimal transport under marginal martingale constraints. Ann. Probab., 44 (2016), no. 1, 42-106] for cost functions of polynomial growth.
\end{abstract}

\thanks{Support from the European Research Council under the European Union's Seventh Framework Programme (FP7/2007-2013) / ERC grant agreement no. 335421, St. John's College in Oxford and the German Scholarship foundation are gratefully acknowledged. We thank Benjamin Jourdain, Jan Ob{\l}{\'o}j and Gudmund Pammer for helpful comments.}
\maketitle

\section{Introduction: The Martingale optimal transport problem and nested Wasserstein distance}
The martingale optimal transport (MOT) problem, which was introduced in \cite{Beiglbock:2013cxa} in discrete time and in \cite{HenryLabordere:2014hta} in continuous time, is a version of the optimal transport problem, which was first posed by Gaspard Monge in \cite{Monge:3L-czg_j}, with an additional martingale constraint. In recent years it has received considerable attention in the field of robust mathematical finance, as it can be utilised to obtain no-arbitrage pricing bounds. For an overview of recent developments in the field we refer to \cite{Beiglbock:2016kt}, \cite{Beiglbock:2017hcc} and the references therein. \\
Given two measures $\mu$ and $\nu$ on the real line, let us denote by $\Pi(\mu, \nu)$ the set of probability measures on $\R^2$ with marginals $\mu$ and $\nu$. With this notation at hand the MOT problem reads
\begin{align}\label{eq:mot}
C(\mu,\nu)\coloneq \inf_{\pi\in \mathcal{M}(\mu,\nu)} \int c(x_1,x_2)\,\pi(dx_1,dx_2).
\end{align}
Here $\mathcal{M}(\mu, \nu)$ is the set of martingale couplings
\begin{align*}
\mathcal{M}(\mu,\nu)=\left\{\pi\in \Pi(\mu,\nu) \ :\ \int (x_2-x_1)\, \pi_{x_1}(dx_2) =0 \quad \mu\text{-a.s.}\right\},
\end{align*}
$(\pi_{x_1})_{x_1\in \R}$ denotes a regular disintegration of the coupling $\pi$ with respect to its first marginal $\mu$ and $c: \R^2 \to \R$ is a Borel measurable function. \\
In this paper we establish continuity of the mapping 
\begin{align}\label{eq:main}
(\mu,\nu)\mapsto C(\mu,\nu)
\end{align}
with respect to the Wasserstein metric and give a new proof of sufficiency of the monotonicity principle for martingale optimal transport, which was introduced in \cite[Lemma 1.11]{Beiglbock:2016kt}, as a consequence of this result. Such a continuity property is well known for classical optimal transport (see e.g. \cite[Theorem 5.20, p.77]{Villani:2009ha}), but has only quite recently  been proven for martingale optimal transport in the independent work \cite{BackhoffVeraguas:2019vwa}. Before, partial results have been obtained in \cite{Juillet:2016ksb} and \cite{Guo:2017txa}. Establishing continuity of $(\mu, \nu) \mapsto C(\mu, \nu)$ is clearly of paramount importance for any practical applications such as computational methods or statistical estimation, when approximations cannot be avoided or uncertainty in the underlying data is present.\\
Contrary to \cite{BackhoffVeraguas:2019vwa}, our main stability result is proved via an estimate of the nested $1$-Wasserstein distance $W_1^{nd}$ between a coupling $\pi \in \Pi(\mu, \nu)$ and the set $\mathcal{M}(\mu, \nu)$. More specifically, we show
\begin{align}\label{eq:main2}
\inf_{\tilde{\pi} \in \mathcal{M}(\mu,\nu)} W_1^{nd}(\pi, \tilde{\pi}) \approx \int \left|\int (x_2-x_1)\, \pi_{x_1}(dx_2) \right|\, \mu(dx_1),
\end{align}
where $W_1^{nd}$ is defined as (see \cite[Proposition 5.2]{Backhoff:2017gk})
\begin{align}\label{eq:defn_nested_wasserstein}
W_1^{nd}(\pi,\tilde{\pi})&=\inf_{\gamma^1\in \Pi(\pi^1,\tilde{\pi}^1)} \Bigg(\int|x_1-y_1|\,\gamma^1(dx_1,y_1) \\\
&\qquad\qquad+\int\inf_{\gamma^2\in \Pi(\pi_{x_1},\tilde{\pi}_{y_1})}\int |x_2-y_2|\,\gamma^2(dx_2, dy_2)\, \gamma^1(dx_1,dy_1)\Bigg) \nonumber.
\end{align}
Here $\pi^1, \tilde{\pi}^1$ denote the first marginals of $\pi$ and $\tilde{\pi}$ respectively, while $(\pi_{x_1})_{x_1\in \R}$, $(\tilde{\pi}_{y_1})_{y_1\in \R}$ denote the disintegrations of $\pi$ and $\tilde{\pi}$ with respect to the first marginal. On an intuitive level, the nested Wasserstein distance only considers those couplings $\gamma\in \Pi(\pi, \tilde{\pi})$, which respect the information flow formalised by the canonical (i.e. coordinate) filtration $(\mathcal{F}_t)_{t\in \{1,2\}}$: in \eqref{eq:defn_nested_wasserstein} this is achieved by first taking an infimum over  couplings of $\pi^1, \tilde{\pi}^1$ (i.e. ``couplings at time one") and then a second (nested) infimum with respect to the respective disintegrations (i.e. ``conditional couplings at time two"). This feature distinguishes $W_1^{nd}$ from the Wasserstein distance $W_1$, which also includes ``anticipative couplings". We refer to \cite[pp. 2-3]{BackhoffVeraguas:2019tnb} for a well-written introduction to this topic. The nested distance was introduced in \cite{Pflug:2009hl}, \cite{Pflug:2012bfa} in the context of multistage stochastic optimisation and was independently analysed in \cite{Lassalle:2018hfa}.\\
Our estimate \eqref{eq:main2} complements the results of \cite{ BackhoffVeraguas:2019vwa}, who essentially show continuity of the monotonicity principle for MOT without using the primal formulation~\eqref{eq:mot} directly. We believe that it is of independent interest as it implies uniform continuity of the mapping $\pi\mapsto \inf_{\tilde{\pi} \in \mathcal{M}(\pi^1,\pi^2)} W_1^{nd}(\pi, \tilde{\pi})$ under a uniform integrability constraint on the second marginal of $\pi$, which we denote by $\pi^2$. Furthermore we show that our estimate \eqref{eq:main2} is sharp for a class of couplings $\pi \in \Pi(\mu, \nu)$ satisfying a dispersion assumption in the spirit of \cite{Hobson:2015cka}, extending results obtained in \cite{Jourdain:2018teb}.\\
The remainder of this article is organised as follows: we state our main results in Section \ref{sec:main}. The proof of Proposition \ref{thm:dispersion} is given in Section \ref{sec:thm_dispersion}, while we prove Theorem \ref{thm:W1} in Section \ref{sec:W1}. In Section \ref{sec:proofs} we collect proofs of the remaining results announced in Section \ref{sec:main}. We also list some generic approximation results for $W_1^{nd}$ and $W_p$ in Section \ref{sec:generic}. These will be frequently used in Sections \ref{sec:thm_dispersion} and \ref{sec:W1} and are proved in the appendix.

\section{Main results}\label{sec:main}
\subsection{Notation}
Let us first outline the notation used in this paper. We denote by $\mathcal{P}(\R^d)$  the probability measures on $\R^d$ and write $\mathcal{P}_p(\R^d)=\{\mu\in \mathcal{P}(\R^d) \ : \ \int |x|^p\mu(dx)<\infty\}$, where $p \ge 1$ and $|\cdot|$ is the Euclidean distance on $\R^d$. For two probability measures $\mu,\nu\in \mathcal{P}(\R^d)$ let $\Pi(\mu, \nu)$ denote the set of couplings $\pi\in\mathcal{P}( \R^d\times \R^d)$ with marginals $\mu$ and $\nu$. Let $f_\ast \pi$ denote the push-forward measure of $\pi$ by the measurable function $f:\R^d\times \R^d\to \R^d$. For $\pi \in \mathcal{P}(\R^d\times \R^d)$ we denote by $\pi^1$ and $\pi^2$ the push-forward measures of $\pi$ under the canonical projection to the first coordinate $(x_1, x_2) \mapsto x_1$ and second coordinate $(x_1,x_2)\mapsto x_2$ respectively. Furthermore a disintegration (or regular conditional probability distribution) of $\pi$ is defined as a   family of probability measures $(\pi_{x_1})_{x_1\in \R^d}$ on $\R^d$  such that for every Borel set $B\subseteq \R^d$ the mapping $x_1\mapsto \pi_{x_1}(B)$ is Borel measurable and for all Borel sets $A,B\subseteq\R^d$ 
\begin{align}\label{eq:idiot2}
\pi(A\times B)=\int_A \pi_{x_1}(B)\,\pi^1(dx_1).
\end{align}
For a general existence result on Polish spaces and fundamental properties of disintegrations we refer to \cite[pp.12-19]{strvar}.
More generally, for a disintegration $(\pi_{x_1})_{x_1\in \R^d}$ on $\R^d$ and a measure $\mu$ on $\R^d$ we denote by $\mu \otimes \pi_{x_1}$ the measure obtained via $\mu\otimes \pi_{x_1}(A \times B)= \int_{A} \pi_{x_1}(B)\,\mu(dx_1)$ for Borel $A,B \subseteq \R^d$. The product coupling of $\pi\in \mathcal{P}(\R^d)$ and $\tilde{\pi}\in \mathcal{P}(\R^d)$ will be denoted by $\pi\times \tilde{\pi}$. We also write $\text{supp}(\pi)$ for the support of a measure $\pi\in \mathcal{P}(\R^d)$ and often use $\mu(x_1):=\mu(\{x_1\})$ to shorten notation. Given a set $\Gamma \in \R^d\times \R^d$ we write 
$\Gamma^1\coloneq \{x_1\in \R^d\ : \ \exists x_2\in \R^d \text{ such that } (x_1, x_2)\in \Gamma\}$ and $\Gamma_{x_1}\coloneq \{x_2\in \R^d \ : \ (x_1, x_2)\in \Gamma\}$. We also write $\int_{\{x_1 \in A\}}$ instead of $\int_{\{(x_1,x_2)\in  A\times \R^d\}}$ for a Borel set $A\subseteq \R^d$.
\\
For $\mu,\nu\in \mathcal{P}_1(\R)$ let $\mathcal{M}(\mu, \nu)$ be the set of martingale couplings
\begin{align*}
\mathcal{M}(\mu,\nu)=\left\{\pi\in \Pi(\mu,\nu) \ :\ \int (x_2-x_1)\, \pi_{x_1}(dx_2) =0 \quad \mu\text{-a.s.}\right\},
\end{align*}
where $(\pi_{x_1})_{x_1\in \R}$ is a disintegration of $\pi$.  We denote the convex order of $\mu$ and $\nu$ by $\mu \preceq_c \nu$, i.e. $\mu \preceq_c \nu$ iff $\int f d\mu \le \int fd\nu$ for all convex functions $f:\R\to \R$. It is well known that $\mu \preceq_c \nu$ is equivalent to $\mathcal{M}(\mu, \nu)\neq \emptyset$ (see \cite{Strassen:1965ha}).  We call a set of measures $\mathfrak{P}\subseteq \mathcal{P}(\R)$ uniformly integrable if $\lim_{K \to \infty} \sup_{\mu \in \mathfrak{P}}\int_{\{|x|\ge K\}} |x|\,\mu(dx)=0$.
Next we recall the $p$-Wasserstein distance on $\mathcal{P}(\R^2)$ given by
\begin{align*}
W_p(\pi,\tilde{\pi})= \left(\inf_{\gamma\in\Pi(\pi,\tilde{\pi})}\int |x_1-y_1|^p+|x_2-y_2|^p\,\gamma(dx,dy) \right)^{1/p},
\end{align*}
where $\Pi(\pi, \tilde{\pi})\subseteq \mathcal{P}(\R^2 \times \R^2)$ is the set of couplings with first marginal $\pi\in \mathcal{P}(\R^2)$ and second marginal $\tilde{\pi}\in \mathcal{P}(\R^2)$, 
and the nested $p$-Wasserstein distance
\begin{align*}
W_p^{nd}(\pi,\tilde{\pi})&= \Bigg(\inf_{\gamma^1\in \Pi(\pi^1,\tilde{\pi}^1)} \Bigg(\int|x_1-y_1|^p\,\gamma^1(dx_1,y_1) \\\
&\qquad\qquad+\int \inf_{\gamma^2\in \Pi(\pi_{x_1},\tilde{\pi}_{y_1})}\int |x_2-y_2|^p\,\gamma^2(dx_2, dy_2)\, \gamma^1(dx_1,dy_1)\Bigg)\Bigg)^{1/p}.
\end{align*}
For ease of notation we furthermore define 
\begin{align*}
\epsilon_{\pi} \coloneq \int \left|\int (x_2-x_1)\,\pi_{x_1}(dx_2)\right|\,\mu(dx_1)
\end{align*}
for $\pi \in \Pi(\mu,\nu)$.
\\\\
Fix $\mu,\nu\in \mathcal{P}_1(\R)$. We now investigate the nested distance $W_1^{nd}$ between a coupling $\pi\in \Pi(\mu,\nu)$ and its projection on to the set $\mathcal{M}(\mu,\nu)$. For the sake of clarity we first give a lower bound on $\inf_{\tilde{\pi}\in \mathcal{M}(\mu,\nu)} W^{nd}_1(\pi,\tilde{\pi})$ and then derive upper bounds under progressively less restrictive assumptions.

\subsection{Projection on to $\mathcal{M}(\mu, \nu)$: attainment of lower bound}
Let us first derive a lower bound on $\inf_{\tilde{\pi}\in \mathcal{M}(\mu,\nu)} W^{nd}_1(\pi,\tilde{\pi})$:
\begin{lemma}\label{lem:upperbound} Let $\mu,\nu \in \mathcal{P}_1(\R)$, $\mu \preceq_c \nu$ and $\pi \in \Pi(\mu,\nu)$. Then
\begin{align}\label{eq:upperbound}
\inf_{\tilde{\pi}\in \mathcal{M}(\mu,\nu)} W^{nd}_1(\pi,\tilde{\pi})\ge \epsilon_{\pi}.
\end{align}
\end{lemma}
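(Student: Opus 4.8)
The plan is to exploit the defining property of a bicausal coupling $\gamma \in \Pi_{\text{bc}}(\pi, \tilde\pi)$, namely that it disintegrates in a way compatible with the filtration $\mathcal{F}_1$ in both the $x$ and $y$ coordinates. Fix any $\tilde\pi \in \mathcal{M}(\mu,\nu)$ and any $\gamma \in \Pi_{\text{bc}}(\pi,\tilde\pi)$. The key structural fact is that bicausality forces the first-coordinate transport to be ``decided before the second coordinate is revealed'': writing $\gamma^{11}$ for the projection of $\gamma$ onto $(x_1,y_1)$, one can factor $\gamma = \gamma^{11}(dx_1,dy_1) \otimes \gamma_{x_1,y_1}(dx_2,dy_2)$, where under $\gamma_{x_1,y_1}$ the marginal of $x_2$ is $\pi_{x_1}$ and the marginal of $y_2$ is $\tilde\pi_{y_1}$. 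I would first record this factorisation carefully, since it is what lets us compare conditional barycentres.

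Next I would estimate $\int |x_2 - y_2|\,\gamma(dx,dy)$ from below using a martingale-type argument. Consider the function $g(x_1,y_1) \coloneq \int (x_2 - x_1)\,\pi_{x_1}(dx_2) - \int(y_2 - y_1)\,\tilde\pi_{y_1}(dy_2)$. Since $\tilde\pi$ is a martingale coupling, the second term vanishes $\tilde\pi^1$-a.s., so $g(x_1,y_1) = \int(x_2-x_1)\,\pi_{x_1}(dx_2) =: b(x_1)$ for $\gamma^{11}$-a.e. $(x_1,y_1)$. On the other hand, by the factorisation above and Jensen/triangle inequalities,
\begin{align*}
|b(x_1)| = \left| \int (x_2 - y_2) - (x_1 - y_1)\,\gamma_{x_1,y_1}(dx_2,dy_2)\right| \le \int |x_2 - y_2|\,\gamma_{x_1,y_1}(dx_2,dy_2) + |x_1 - y_1|.
\end{align*}
Integrating against $\gamma^{11}$ and using that $\int |b(x_1)|\,\gamma^{11}(dx_1,dy_1) = \int |b(x_1)|\,\mu(dx_1) = \epsilon$ (because $\gamma^{11}$ has first marginal $\mu$), I obtain
\begin{align*}
\epsilon \le \int |x_2 - y_2|\,\gamma(dx,dy) + \int |x_1 - y_1|\,\gamma(dx,dy) \le \int |x_1-y_1| + |x_2-y_2|\,\gamma(dx,dy).
\end{align*}
Taking the infimum over $\gamma \in \Pi_{\text{bc}}(\pi,\tilde\pi)$ and then over $\tilde\pi \in \mathcal{M}(\mu,\nu)$ gives the claim $\inf_{\tilde\pi} W_1^{nd}(\pi,\tilde\pi) \ge \epsilon$.

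The main obstacle I anticipate is the careful justification of the factorisation of the bicausal coupling and, in particular, verifying that the conditional law of $x_2$ under $\gamma_{x_1,y_1}$ is exactly $\pi_{x_1}$ (and symmetrically for $y_2$) rather than something that also depends on $y_1$ — this is precisely where causality (not just bicausality) is used, and it requires handling the ``up to null sets'' measurability conventions with some care. The rest is a routine application of Jensen's inequality and Fubini. A minor subtlety worth a line is that all integrals are finite because $\mu,\nu \in \mathcal{P}_1(\R)$, so $\epsilon < \infty$ and the displayed manipulations are legitimate; if no $\gamma$ has finite transport cost the inequality is vacuous.
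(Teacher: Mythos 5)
Your argument is correct and is essentially the paper's proof: both reduce the bound to the conditional barycentres via the bicausal structure (the conditional law of $x_2$ given $(x_1,y_1)$ is $\pi_{x_1}$, that of $y_2$ is $\tilde{\pi}_{y_1}$, and the martingale property kills the $y$-side), then apply Jensen and the (reverse) triangle inequality and integrate against the first-coordinate coupling, whose first marginal is $\mu$. The only cosmetic difference is that the paper accesses this structure through the dynamic-programming representation of $W_1^{nd}$ from \cite{Backhoff:2017gk}, whereas you derive the needed lower bound directly from the factorisation of an arbitrary bicausal coupling, which is the same content used in only one direction.
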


We introduce the following assumption:

\begin{assumption}[Barycentre dispersion assumption]\label{def:dispersion}
Let $\mu \preceq_c \nu$ and $\pi\in \Pi(\mu,\nu)$. For all $x \in \R$
\begin{align}\label{eq:generaliseddispersion}
\int_{\{x_1 \ge x\}} (x_2-x_1)\, \pi(dx_1, dx_2) \ge 0.
\end{align}
\end{assumption}

In contrast to \cite{Hobson:2015cka} our dispersion assumption \ref{def:dispersion} is formulated for $\pi$ and not just the marginals $\mu$ and $\nu$. In order to motivate it, let us recall the Hoeffding--Frechet coupling $\pi_{HF}\in \Pi(\mu,\nu)$: it enjoys the property, that it is an optimiser for problems of the form
\begin{align*}
\inf_{\pi\in \Pi(\mu,\nu)}\int c(x_1,x_2)\, \pi(dx_1,dx_2),
\end{align*}
where $c(x_1,x_2)=h(x_2-x_1)$ and $h:\R\to \R$ is any convex function. In fact $\pi_{HF}$ is characterised by the following monotonicity property:
\begin{align}\label{eq:fh_montonone}
&\quad\text{There exists a Borel set }\Gamma_{HF}\subseteq \R^2 \text{ such that }\pi_{HF}(\Gamma_{HF})=1\nonumber\\
&\text{and whenever }(x_1,x_2),(y_1,y_2)\in \Gamma_{HF}\text{ and }x_1<y_1\text{ then also }x_2\le y_2. 
\end{align}
This characterisation directly implies the following lemma:
\begin{lemma}\label{lem:fh}
Let $\mu,\nu \in \mathcal{P}_1(\R)$ with $\mu \preceq_c \nu$ and $\pi_{HF}\in \Pi(\mu, \nu)$ be the Hoeffding--Frechet coupling. Then $\pi_{HF}$ satisfies the barycentre dispersion assumption \ref{def:dispersion}.
\end{lemma}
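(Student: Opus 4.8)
The plan is to derive the assertion from the monotone support property \eqref{eq:fh_montonone} together with the description of convex order via integrated quantile functions. Since $\mu\preceq_c\nu$ forces $\mu$ and $\nu$ to share the same (finite) mean, we have $\int_{\R^2}(x_2-x_1)\,\pi_{HF}(dx_1,dx_2)=0$, so \eqref{eq:generaliseddispersion} for $\pi=\pi_{HF}$ is equivalent to
$$\int_{\{x_1<x\}}(x_2-x_1)\,\pi_{HF}(dx_1,dx_2)\le 0 \qquad \text{for every }x\in\R.$$
I fix such an $x$ and set $\alpha\coloneq\mu((-\infty,x))=\pi_{HF}(\{x_1<x\})$; if $\alpha=0$ there is nothing to show, so I assume $\alpha>0$.

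First I would analyse the restriction of $\pi_{HF}$ to $B\coloneq\{x_1<x\}$. Its first marginal is $\mu|_{(-\infty,x)}$, while its second marginal, say $\nu_B$, satisfies $\nu_B\le\nu$ and $\nu_B(\R)=\alpha$. The crucial observation is that $\nu_B$ is the \emph{lower $\alpha$-portion} of $\nu$: picking a Borel set $\Gamma_{HF}$ of full $\pi_{HF}$-measure as in \eqref{eq:fh_montonone}, every $(x_1,x_2)\in\Gamma_{HF}\cap B$ satisfies $x_1<x\le y_1$, hence $x_1<y_1$ and therefore $x_2\le y_2$, for every $(y_1,y_2)\in\Gamma_{HF}$ with $y_1\ge x$; thus the $\nu_B$-essential supremum does not exceed the $(\nu-\nu_B)$-essential infimum, which forces $\nu_B=\nu|_{(-\infty,c)}+r\,\delta_c$ for a suitable threshold $c$ and mass $r\in[0,\nu(\{c\})]$. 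Consequently $\int t\,\nu_B(dt)$ equals the smallest first moment of any sub-measure of $\nu$ of mass $\alpha$, namely $\int_0^\alpha F_\nu^{-1}(u)\,du$; likewise, since $\mu|_{(-\infty,x)}$ is a sub-measure of $\mu$ of mass $\alpha$ concentrated on a left half-line, $\int_{(-\infty,x)}t\,\mu(dt)=\int_0^\alpha F_\mu^{-1}(u)\,du$. (Alternatively this whole paragraph can be bypassed using the comonotone representation $\pi_{HF}=(F_\mu^{-1},F_\nu^{-1})_{*}\mathrm{Leb}_{(0,1)}$, under which $B$ pulls back to $(0,\alpha)$.)

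It then remains to use the standard fact that $\mu\preceq_c\nu$ implies $\int_0^\beta F_\mu^{-1}(u)\,du\ge\int_0^\beta F_\nu^{-1}(u)\,du$ for all $\beta\in[0,1]$; this follows from the inequality $\int(k-t)^+\mu(dt)\le\int(k-t)^+\nu(dt)$ (valid for all $k$ since $t\mapsto(k-t)^+$ is convex) together with the elementary identity $\int_0^\beta F_\rho^{-1}(u)\,du=\sup_{k\in\R}\big(\beta k-\int(k-t)^+\rho(dt)\big)$. Applying this with $\beta=\alpha$ yields $\int_B(x_2-x_1)\,\pi_{HF}=\int_0^\alpha F_\nu^{-1}(u)\,du-\int_0^\alpha F_\mu^{-1}(u)\,du\le 0$, which is the required inequality and hence completes the proof. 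I expect the only genuinely delicate point to be the marginal identification in the second paragraph: in the presence of atoms one has to argue with some care that the second marginal of $\pi_{HF}|_{\{x_1<x\}}$ is exactly the lower $\alpha$-portion of $\nu$ — this is precisely where \eqref{eq:fh_montonone} is used — whereas all remaining manipulations are routine, the integrability needed being guaranteed by $\mu,\nu\in\mathcal{P}_1(\R)$.
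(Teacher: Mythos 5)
Your argument is correct: the equal-means reduction to $\int_{\{x_1<x\}}(x_2-x_1)\,\pi_{HF}\le 0$ is legitimate since $\mu\preceq_c\nu$ in $\mathcal{P}_1(\R)$ forces equal (finite) means, the identification of the second marginal of $\pi_{HF}|_{\{x_1<x\}}$ as the lower $\alpha$-portion $\nu|_{(-\infty,c)}+r\delta_c$ is exactly what \eqref{eq:fh_montonone} delivers (including in the atomic case), and the inequality $\int_0^\beta F_\mu^{-1}\ge\int_0^\beta F_\nu^{-1}$ follows as you say from testing convex order against $t\mapsto(k-t)^+$ together with the conjugacy identity $\int_0^\beta F_\rho^{-1}(u)\,du=\sup_k\bigl(\beta k-\int(k-t)^+\rho(dt)\bigr)$. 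The route differs from the paper's in execution though not in substance. The paper argues by contraposition: it defines $x^\ast$ as the supremum of points where \eqref{eq:generaliseddispersion} fails, uses left-continuity of $x\mapsto\int_{\{x_1\ge x\}}(x_2-x_1)\,\pi_{HF}$ to find a slightly smaller threshold where it still fails, lets $\hat{x}$ be the infimum of the support of the corresponding disintegration, and then tests convex order against the single call payoff $(\,\cdot-\hat{x})^+$, using \eqref{eq:fh_montonone} to see that the second coordinates paired with $\{x_1\ge x^\ast-\delta\}$ constitute the upper tail $\{x_2\ge\hat{x}\}$ of $\nu$. Both proofs therefore rest on the same two ingredients — the tail-to-tail mapping property of the Hoeffding--Fr\'echet coupling coming from \eqref{eq:fh_montonone}, and convex order evaluated on hockey-stick functions — but yours is a direct computation that packages the hockey-stick testing into the standard quantile characterisation of convex order (and so needs the quantile/atom bookkeeping you flag), whereas the paper's avoids quantile functions at the cost of the critical-point and left-continuity argument and a strict-inequality contradiction. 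Either way the conclusion, Assumption \ref{def:dispersion} for $\pi_{HF}$, is established.
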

It turns out that our lower bound on $\inf_{\tilde{\pi}\in \mathcal{M}(\mu,\nu)} W_1^{nd}(\pi, \tilde{\pi})$ is tight under the barycentre dispersion assumption:
\begin{proposition}\label{thm:dispersion}
Let $\mu,\nu \in \mathcal{P}_1(\R)$ satisfy $\mu \preceq_c \nu$. Let $\pi\in \Pi(\mu,\nu)$ satisfy the barycentre dispersion assumption \ref{def:dispersion}. Then there exists a martingale measure $\pi_{mr}\in \mathcal{M}(\mu,\nu)$ such that
\begin{align}\label{eq:dispersion}
\inf_{\tilde{\pi}\in \mathcal{M}(\mu,\nu)} W^{nd}_1(\pi,\tilde{\pi})=W_1^{nd}(\pi,\pi_{mr})=\epsilon_{\pi}.
\end{align}
\end{proposition}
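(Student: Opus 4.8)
The plan is to build on the lower bound of Lemma~\ref{lem:upperbound}. Since every $\pi_{mr}\in\mathcal{M}(\mu,\nu)$ automatically satisfies $W_1^{nd}(\pi,\pi_{mr})\ge\epsilon$ and $\inf_{\tilde\pi\in\mathcal{M}(\mu,\nu)}W_1^{nd}(\pi,\tilde\pi)\ge\epsilon$, it suffices to exhibit one $\pi_{mr}\in\mathcal{M}(\mu,\nu)$ with $W_1^{nd}(\pi,\pi_{mr})\le\epsilon$. Writing $b(x_1)\coloneq\int(x_2-x_1)\,\pi_{x_1}(dx_2)$, I would reduce this to constructing a Markov kernel $x_1\mapsto K_{x_1}\in\mathcal{P}(\R)$ with (i) $\int K_{x_1}\,\mu(dx_1)=\nu$, (ii) $\int y\,K_{x_1}(dy)=x_1$ for $\mu$-a.e.\ $x_1$, and (iii) for $\mu$-a.e.\ $x_1$ the measures $K_{x_1}$ and $\pi_{x_1}$ are stochastically ordered, $K_{x_1}$ being the smaller one when $b(x_1)>0$ and the larger one when $b(x_1)<0$. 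Indeed, $\pi_{mr}\coloneq\mu\otimes K_{x_1}$ then lies in $\mathcal{M}(\mu,\nu)$ by (i) and (ii); the transport plan that leaves the first coordinate unchanged and, conditionally on it, couples $\pi_{x_1}$ and $K_{x_1}$ $W_1$-optimally on $\R$ is a well-defined element of $\Pi(\pi,\pi_{mr})$, it is bicausal (the $\gamma$-disintegration of any $\mathcal{F}_1$-set is a function of the first coordinate alone, on both sides), and it has cost $\int W_1(\pi_{x_1},K_{x_1})\,\mu(dx_1)$. By the one-dimensional identity $W_1(\alpha,\beta)=\int_{\R}|F_\alpha(t)-F_\beta(t)|\,dt$ together with (ii)--(iii)—a difference of cumulative distribution functions of constant sign integrates in absolute value to the difference of the means—this cost equals $\int|b(x_1)|\,\mu(dx_1)=\epsilon$, which gives the claim.

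The crux is then the construction of $K$, and this is exactly where Assumption~\ref{def:dispersion} enters. The assumption says precisely that the signed measure $b(x_1)\,\mu(dx_1)$, whose total mass is $0$ because $\mu\preceq_c\nu$, puts nonnegative mass on every half-line $[x,\infty)$, hence nonpositive mass on every $(-\infty,x)$. Geometrically, I would obtain $\pi_{mr}$ from $\pi$ by redistributing, inside each fibre, the mass of $\pi_{x_1}$ in the second coordinate—pushing it downward on the fibres with $b(x_1)>0$ and upward on those with $b(x_1)<0$—in such a way that the aggregate over all fibres stays equal to $\nu$ at every level $t$; equivalently, the downward shifts forced on the $\{b>0\}$ fibres must be matched, level by level, against the upward shifts required on the $\{b<0\}$ fibres. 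The dispersion condition is what guarantees that this matching can be carried out. Concretely I would first treat finitely supported $\mu$ and $\nu$, where $K$ can be produced by an explicit greedy procedure that processes the atoms of $\mu$ in increasing order and uses $\int_{\{x_1\ge x\}}(x_2-x_1)\,\pi(dx_1,dx_2)\ge0$ to ensure that at no stage does one run out of the appropriate $\nu$-mass and that (iii) is preserved throughout; alternatively, a direct construction for general $\mu,\nu$ via a monotone rearrangement of the conditional laws guided by the function $x\mapsto\int_{\{x_1\ge x\}}(x_2-x_1)\,\pi(dx_1,dx_2)$ should also work.

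The general case would then follow by approximation: pick finitely supported $\mu_n\to\mu$ and $\nu_n\to\nu$ with $\mu_n\preceq_c\nu_n$, together with $\pi_n\in\Pi(\mu_n,\nu_n)$ converging to $\pi$ in the adapted (bicausal) sense and inheriting the dispersion assumption, apply the finite construction to obtain $\pi_{mr,n}\in\mathcal{M}(\mu_n,\nu_n)$ with $W_1^{nd}(\pi_n,\pi_{mr,n})=\epsilon_n$, and pass to a limit point $\pi_{mr}$. One checks that $(\pi_{mr,n})_n$ is tight and uniformly integrable (being dominated by $\nu_n$), that its limit points lie in $\mathcal{M}(\mu,\nu)$, that $\epsilon_n\to\epsilon$ (the barycentre functional is continuous for the adapted topology), and that $W_1^{nd}$ is lower semicontinuous along such approximations, which yields $W_1^{nd}(\pi,\pi_{mr})\le\liminf_n\epsilon_n=\epsilon$.

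I expect the main obstacle to be the (finite-case) construction of $K$: one must simultaneously meet the marginal constraint (i), the martingale property (ii) and the stochastic-ordering requirement (iii), and it is only the barycentre dispersion assumption that makes all three compatible—dropping it, one can in general only hope for an inequality with a multiplicative constant in place of the sharp value $\epsilon$. A secondary technical difficulty is arranging the approximating data so that convergence takes place in the adapted topology while the dispersion property is preserved along the approximation.
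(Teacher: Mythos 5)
Your reduction is sound and is, in substance, a reformulation of what the paper's construction achieves: the paper's switching algorithm moves mass monotonically within each fibre (upward on fibres with negative barycentre, downward on fibres with positive barycentre, never overshooting zero), so the resulting $\pi_{mr}=\mu\otimes K_{x_1}$ satisfies exactly your (i)--(iii); your bicausality check for the ``identity in the first coordinate, fibrewise optimal'' coupling and the cost identity $W_1(\pi_{x_1},K_{x_1})=|b(x_1)|$ via cumulative distribution functions are correct. The problem is that the two places where you defer are precisely where the proof lives. First, the finite-case construction of $K$ is not given: you assert that a greedy pass through the atoms of $\mu$, using Assumption \ref{def:dispersion}, ``should'' never run out of the appropriate $\nu$-mass while preserving (i)--(iii), but verifying this is the entire content of the statement. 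The paper does it by an explicit iterative exchange (pick the largest $x_1^-$ with negative barycentre, use \eqref{eq:generaliseddispersion} to find $x_1^+>x_1^-$ with positive barycentre and points $x_2^-<x_2^+$ in the respective fibres, shift the maximal admissible mass $\lambda$, and show termination); without an argument of this kind — in particular without showing why the level-by-level capacity constraints on the CDFs can always be met — the claim that (i), (ii) and (iii) are simultaneously realisable is unproven.

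Second, the passage to general $\pi$ has a genuine gap. The approximations produced as in Lemma \ref{lemma:approx1} do \emph{not} inherit Assumption \ref{def:dispersion} exactly (the paper only gets $\int_{\{x_1\ge x\}}(x_2-x_1)\,\pi^n(dx_1,dx_2)\ge -1/n$ and must adapt the finite argument accordingly), and, more seriously, $W_1^{nd}$ is \emph{not} lower semicontinuous along weak (or even Wasserstein) limits of the second argument: a standard example is $\tilde\pi^n$ with first coordinate $\pm 1/n$ determining the sign of the second coordinate, converging weakly to the product-type limit with degenerate first marginal, along which the nested distance to a fixed $\pi$ jumps up in the limit. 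So extracting a tight, uniformly integrable weak limit point $\pi_{mr}$ of $\pi_{mr,n}$ and invoking lower semicontinuity does not yield $W_1^{nd}(\pi,\pi_{mr})\le\epsilon$. This is exactly why the paper works at the level of the kernels: it extends the disintegrations $x_1\mapsto\pi^n_{mr,x_1}$ to a common $\mu$-a.e.\ defined family via the explicit construction of Lemma \ref{lemma:approx1}, applies a Koml\'os-type theorem to get $\mu$-a.e.\ weak convergence of Ces\`aro averages of the conditional laws, and controls the fibrewise $W_1$ costs of these averages directly. Your scheme could be repaired in the same spirit (compactness for the kernels $K^n$ fibrewise rather than for the joint laws), but as written the limiting step would fail.
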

We call a martingale coupling $\pi_{mr}\in \mathcal{M}(\mu,\nu)$ satisfying \eqref{eq:dispersion} a \emph{($W_1^{nd}$-minimal) martingale rearrangement coupling} of $\pi$. We now discuss some basic properties of $\pi_{mr}$. Let us first remark that, as $W_1(\cdot, \cdot) \le W_1^{nd}(\cdot, \cdot)$,  we have  $W_1(\pi, \pi_{mr})\le \epsilon_{\pi}$ in Proposition \ref{thm:dispersion} and this inequality is strict in general. Furthermore, while existence of $\pi_{mr}$ is guaranteed in Proposition \ref{thm:dispersion}, uniqueness is not satisfied in general as the following example shows:
\begin{example}
Take $$\pi\coloneq\frac{1}{4}\left( \delta_{(-2,-3)}+\delta_{(-1,-2)}+\delta_{(1,2)}+\delta_{(2,3)}\right).$$ As $\epsilon_{\pi}=1$ it remains to check that both $$\bar{\pi}=\frac{1}{20} \left(4\delta_{(-2,-3)}+\delta_{(-2,2)}+4\delta_{(-1,-2)}+\delta_{(-1,3)}+4\delta_{(1,2)}+\delta_{(1,-3)}+4\delta_{(2,3)}+\delta_{(2,-2)}\right)$$
and
$$\hat{\pi}\coloneq \frac{1}{24}\left(5\delta_{(-2,-3)}+\delta_{(-2,3)}+5\delta_{(2,3)}+\delta_{(2,-3)}\right)+\frac{1}{16}\left(3\delta_{(-1,-2)}+\delta_{(-1,2)}+3\delta_{(1,2)}+\delta_{(1,-2)}\right)$$
are $W_1^{nd}$-minimal rearrangement couplings.
\end{example}
By an application of the triangle inequality the following corollary of Proposition \ref{thm:dispersion} is immediate:
\begin{corollary}\label{cor:special}
Let $c:\R^2\to \R$ be $L$-Lipschitz-continuous. Then
\begin{align*}
 C(\mu, \nu)=\inf_{\pi\in \Pi(\mu,\nu),\ \pi\ \mathrm{satisfies\  Ass. \ \ref{def:dispersion}}} \left( \int c(x_1,x_2)\,\pi(dx_1,dx_2) +
L\epsilon_{\pi} \right).\end{align*}
\end{corollary}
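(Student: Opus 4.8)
The plan is to prove the identity by establishing the two inequalities separately; the quantity $\epsilon$ appearing in the displayed infimum is of course the one attached to the coupling $\pi$ over which one optimises. Throughout one uses that an $L$-Lipschitz cost has linear growth, $|c(x_1,x_2)|\le |c(0,0)|+L(|x_1|+|x_2|)$, so that $\int|c|\,d\pi<\infty$ for every $\pi\in\Pi(\mu,\nu)$ when $\mu,\nu\in\mathcal{P}_1(\R)$.

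First I would show that $C(\mu,\nu)$ dominates the right-hand side. The point is that every martingale coupling $\pi\in\mathcal{M}(\mu,\nu)$ itself satisfies Assumption \ref{def:dispersion}: disintegrating against $\mu$,
\[
\int_{\{x_1\ge x\}}(x_2-x_1)\,\pi(dx_1,dx_2)=\int_{\{x_1\ge x\}}\Bigl(\int(x_2-x_1)\,\pi_{x_1}(dx_2)\Bigr)\mu(dx_1)=0
\]
for every $x\in\R$, and moreover $\epsilon=0$ for such $\pi$. Hence $\mathcal{M}(\mu,\nu)$ lies in the feasible set of the right-hand infimum, on which the objective reduces to $\int c\,d\pi$; thus the right-hand side is at most $\inf_{\pi\in\mathcal{M}(\mu,\nu)}\int c\,d\pi=C(\mu,\nu)$.

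For the reverse inequality I would fix any $\pi\in\Pi(\mu,\nu)$ satisfying Assumption \ref{def:dispersion} and invoke Proposition \ref{thm:dispersion} to obtain a martingale rearrangement $\pi_{mr}\in\mathcal{M}(\mu,\nu)$ with $W_1^{nd}(\pi,\pi_{mr})=\epsilon$; since $W_1\le W_1^{nd}$ this gives $W_1(\pi,\pi_{mr})\le\epsilon$. Because $c$ is $L$-Lipschitz for the $\ell^1$-metric $|x_1-y_1|+|x_2-y_2|$ used in the definition of $W_1$, the Kantorovich--Rubinstein estimate gives, for any $\gamma\in\Pi(\pi,\pi_{mr})$,
\[
\Bigl|\int c\,d\pi-\int c\,d\pi_{mr}\Bigr|=\Bigl|\int\bigl(c(x)-c(y)\bigr)\,\gamma(dx,dy)\Bigr|\le L\int\bigl(|x_1-y_1|+|x_2-y_2|\bigr)\,\gamma(dx,dy),
\]
and minimising over $\gamma$ yields $\bigl|\int c\,d\pi-\int c\,d\pi_{mr}\bigr|\le L\,W_1(\pi,\pi_{mr})\le L\epsilon$. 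Since $\pi_{mr}\in\mathcal{M}(\mu,\nu)$ it follows that $C(\mu,\nu)\le\int c\,d\pi_{mr}\le\int c\,d\pi+L\epsilon$, and taking the infimum over all admissible $\pi$ shows that $C(\mu,\nu)$ is at most the right-hand side. Together the two inequalities give the claim.

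The substantive content is entirely in Proposition \ref{thm:dispersion}, so there is no genuine obstacle here; the only points deserving a word of care are the finiteness of the integrals of $c$ (covered by the linear-growth bound and the $\mathcal{P}_1$ assumption on the marginals) and matching the Lipschitz constant to the $\ell^1$-metric underlying $W_1$, which is what makes the additive error exactly $L\epsilon$ rather than a constant multiple of it.
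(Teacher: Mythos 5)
Your proposal is correct and follows essentially the same route as the paper: the lower bound comes from noting that every martingale coupling satisfies Assumption \ref{def:dispersion} with $\epsilon=0$, and the upper bound combines Proposition \ref{thm:dispersion} with the Lipschitz estimate $\bigl|\int c\,d\pi-\int c\,d\pi_{mr}\bigr|\le L\,W_1(\pi,\pi_{mr})\le L\,W_1^{nd}(\pi,\pi_{mr})=L\epsilon$. The only cosmetic difference is that you pass through $W_1\le W_1^{nd}$ explicitly, while the paper bounds the cost difference by $L\,W_1^{nd}$ directly; the argument is the same.
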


Proposition \ref{thm:dispersion} and Corollary \ref{cor:special} complement \cite{Jourdain:2018teb}, who give a characterisation of the above $W_1^{nd}$-minimal martingale couplings of the Hoeffding-Frechet coupling $\pi_{HF}$ by use of its characterisation via quantile functions. More specifically  \cite[Theorem 2.11]{Jourdain:2018teb} states 
\begin{align}\label{eq:jm}
\inf_{\pi \in \mathcal{M}(\mu, \nu)} \int |x_1-x_2|\, \pi(dx_1, dx_2) \le 2 W_1(\mu, \nu).
\end{align}
We can recover \eqref{eq:jm}, noting that
\begin{align*}
\inf_{\pi \in \mathcal{M}(\mu, \nu)} \int |x_1-x_2|\, \pi(dx_1, dx_2)&\le \int |x_1-x_2|\,\pi_{HF}(dx_1,dx_2)+\epsilon_{\pi_{HF}}\\
&= \int |x_1-x_2|\,\pi_{HF}(dx_1,dx_2)\\
&\quad+\int \left| \int (x_2-x_1)\,\pi_{HF,x_1}(dx_2)\right|\,\mu(dx_1)\\
&\le 2 \int |x_1-x_2|\,\pi_{HF}(dx_1,dx_2)=2W_1(\mu,\nu),
\end{align*}
where we used Lemma \ref{lem:fh} and Corollary \ref{cor:special} in the first inequality and Jensen's inequality for the second inequality.\\

Let us lastly give the following remark.
\begin{remark}\label{rem:antitone}
While Assumption \ref{def:dispersion} is sufficient for \eqref{eq:dispersion}, it is not necessary. Indeed, at least in the finitely supported case, \eqref{eq:dispersion} also holds for the antitone or decreasing monotone coupling $\pi_{AT}$, which satisfies
\begin{align}\label{eq:antitone}
&\quad\text{There exists a Borel set }\Gamma_{AT}\subseteq \R^2 \text{ such that }\pi_{AT}(\Gamma_{AT})=1\nonumber\\
&\text{and whenever }(x_1,x_2),(y_1,y_2)\in \Gamma_{AT}\text{ and }x_1<y_1\text{ then }x_2\ge y_2. 
\end{align}
\end{remark}
We leave the question of finding a necessary condition for \eqref{eq:dispersion} for future research.\\

\begin{figure}[h!]
\begin{tikzpicture}[line cap=round,line join=round,x=1.5cm,y=1cm]
\draw[-,color=black] (0.5,0) -- (7.5,0) node[right]{$\mu$}; 
\draw[-,color=black] (0.5,3) -- (7.5,3)  node[right]{$\nu$}; 
\draw[->, line width=2pt, color=red] (3,0)  node[below] {\footnotesize $x_1^-$} -- (1,3) node[above] {\footnotesize $x_2^-$};
\draw[->,line width=2pt, color=red] (3,0) -- (3.5,3);
\draw[->,line width=2pt, color=azure] (5,0) node[below] {\footnotesize $x_1^+$} -- (7,3) node[above]  {\footnotesize $x_2^+$};
\draw[->,line width=2pt, color=azure] (5,0) -- (4.5,3);
\end{tikzpicture}
\begin{tikzpicture}[line cap=round,line join=round,x=1.5cm,y=1cm]
\draw[-,color=black] (0.5,0) -- (7.5,0) node[right]{$\mu$}; 
\draw[-,color=black] (0.5,3) -- (7.5,3) node[right]{$\nu$}; 
\draw[->, dashed, line width=2pt, color=red] (3,0)  node[below] {\footnotesize $x_1^-$} -- (1,3) node[above] {\footnotesize $x_2^-$};
\draw[->,line width=2pt, color=red] (3,0) -- (3.5,3);
\draw [->, dashed, line width=2pt, color=red] (3,0) -- (7,3);
\draw[->,dashed, line width=2pt, color=azure] (5,0) node[below] {\footnotesize $x_1^+$} -- (7,3)  node[above]  {\footnotesize $x_2^+$};
\draw[->, line width=2pt, color=azure] (5,0) -- (4.5,3) ;
\draw [->, dashed, line width=2pt, color=azure] (5,0) -- (1,3);
\end{tikzpicture}
\caption{Exchange of masses at $x_2^-$ and $x_2^+$ for the case $x_1^-<x_1^+$.}
\label{fig:1}
\end{figure}

The proofs of the above results are deferred to Sections \ref{sec:thm_dispersion} and \ref{sec:proofs} and rely on the following simple observation: let us assume for the moment that $\pi\in \Pi(\mu,\nu)\setminus \mathcal{M}(\mu, \nu)$ is finitely supported and let us consider the barycentres of the disintegration $(\pi_{x_1})_{x_1\in \R}$ given by $\left(\int (x_2-x_1)\,\pi_{x_1}(dx_2)\right)_{x_1\in \text{supp}(\mu)}$. By the barycentre dispersion assumption \ref{def:dispersion} and convex order of $\mu$ and $\nu$ we can find pairs $x_1^-, x_1^+\in\text{supp}(\mu)$ such that
\begin{align*}
 \int (x_2-x_1^-)\,\pi_{x_1^-}(dx_2)<0,  \qquad \int (x_2-x_1^+)\,\pi_{x_1^+}(dx_2)>0
\end{align*}
and corresponding points $x_{2}^-\in \text{supp}\left(\pi_{x_1^-}\right), x_2^+\in \text{supp}\left(\pi_{x_1^+}\right)$ with $x_1^-<x_1^+$ and $x_2^-<x_1^-<x_1^+<x_2^+$. Assigning a part of the mass at $x_2^-$ and $x_2^+$ to the disintegrations $\pi_{x_1^+}$ and $\pi_{x_1^-}$ respectively then allows to essentially rectify the barycentres of $\pi_{x_1^-}$ and $\pi_{x_1^+}$ piece by piece without changing the marginal constraints (see Figure \ref{fig:1}).

\subsection{Projection on to $\mathcal{M}(\mu, \nu)$: the general case}
It turns out that \eqref{eq:dispersion} is not satisfied in general (see Example \ref{Ex:1}). Instead we obtain the following relaxation of Proposition \ref{thm:dispersion} as a main result:
\begin{theorem}\label{thm:W1}
Let  $\mathfrak{P} \subseteq \mathcal{P}_1(\R)$ be uniformly integrable, i.e. $$\lim_{K \to \infty} \sup_{\mu \in \mathfrak{P}}\int_{\{|x|\ge K\}} |x|\,\mu(dx)=0.$$
 Then for every $\delta>0$ there exists a constant $K=K(\delta,\mathfrak{P})$ such that the following holds: for every measure $\pi\in \Pi(\mu,\nu)$, where $\mu\preceq_c\nu$ and $\nu\in \mathfrak{P}$, we have
\begin{align}\label{eq:rea}
\inf_{\tilde{\pi}\in \mathcal{M}(\mu,\nu)} W^{nd}_1(\pi,\tilde{\pi})\le K \epsilon_{\pi}+\delta.
\end{align} 
\end{theorem}
Similarly to Corollary \ref{cor:special} we obtain:
\begin{corollary}\label{cor:jourdain}
Let $c:\R^2\to \R$ be $L$-Lipschitz-continuous. Then for every $\delta>0$ there exists a constant $K=K(\delta,\nu)$ such that 
\begin{align*}
\inf_{\pi\in \Pi(\mu,\nu)} \int c(x_1,x_2)\,\pi(dx_1,dx_2) +
K L\epsilon_{\pi}
&\le C(\mu, \nu) \\
&\le \inf_{\pi\in \Pi(\mu,\nu)}  \int c(x_1,x_2)\,\pi(dx_1,dx_2) +
KL \epsilon_{\pi} +L\delta.
\end{align*}
\end{corollary}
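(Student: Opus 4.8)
The plan is to deduce the statement from Theorem~\ref{thm:W1} by exactly the triangle-inequality argument that turned Proposition~\ref{thm:dispersion} into Corollary~\ref{cor:special}; the only change is that the identity $W_1^{nd}(\pi,\pi_{mr})=\epsilon$ is now replaced by the approximate estimate $\inf_{\tilde\pi\in\mathcal M(\mu,\nu)}W_1^{nd}(\pi,\tilde\pi)\le K\epsilon+\delta$, which is what forces the extra additive $\delta$ into the conclusion.

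Before that I would clear away the degenerate cases: if $L=0$ then $c$ is constant and the claim holds with an arbitrary $K$, and I take $\mu,\nu\in\mathcal P_1(\R)$ with $\mu\preceq_c\nu$ to be in force as in Theorem~\ref{thm:W1}, since otherwise $\mathcal M(\mu,\nu)=\emptyset$, $C(\mu,\nu)=+\infty$, and there is nothing meaningful to prove. Under these hypotheses $|c(x_1,x_2)|\le|c(0,0)|+L(|x_1|+|x_2|)$ together with $\mu,\nu\in\mathcal P_1(\R)$ makes $\int c\,d\pi$ finite for every $\pi\in\Pi(\mu,\nu)$, so $C(\mu,\nu)\in\R$ and every expression below is well defined. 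Next I would invoke Theorem~\ref{thm:W1} for the singleton family $\mathfrak P=\{\nu\}$ (a single $\mathcal P_1(\R)$-measure is trivially uniformly integrable), with $\delta/L$ in place of $\delta$; this produces a constant, which since $L$ depends only on $c$ may be recorded as $K=K(\delta,\nu)$, such that $\inf_{\tilde\pi\in\mathcal M(\mu,\nu)}W_1^{nd}(\pi,\tilde\pi)\le K\epsilon+\delta/L$ for all admissible $\mu$ and all $\pi\in\Pi(\mu,\nu)$.

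For the upper bound I would fix $\pi\in\Pi(\mu,\nu)$ and $\tilde\pi\in\mathcal M(\mu,\nu)$. For any bicausal $\gamma\in\Pi_{\text{bc}}(\pi,\tilde\pi)$, Lipschitz continuity of $c$ with respect to the $\ell^1$ metric on $\R^2$ underlying the cost in the definition of $W_1^{nd}$ (if $c$ is Lipschitz for the Euclidean metric the same bound holds a fortiori, as $\|x-y\|_2\le|x_1-y_1|+|x_2-y_2|$) gives
\[
\int c\,d\tilde\pi-\int c\,d\pi=\int\bigl(c(y_1,y_2)-c(x_1,x_2)\bigr)\,\gamma(dx,dy)\le L\int\bigl(|x_1-y_1|+|x_2-y_2|\bigr)\,\gamma(dx,dy),
\]
and passing to the infimum over $\gamma$ yields $\int c\,d\tilde\pi\le\int c\,d\pi+L\,W_1^{nd}(\pi,\tilde\pi)$. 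Since $\tilde\pi\in\mathcal M(\mu,\nu)$ we have $C(\mu,\nu)\le\int c\,d\tilde\pi$; taking the infimum over $\tilde\pi\in\mathcal M(\mu,\nu)$ and substituting the bound from the previous paragraph gives $C(\mu,\nu)\le\int c\,d\pi+KL\epsilon+\delta$, whence, taking the infimum over $\pi\in\Pi(\mu,\nu)$, the upper inequality of the corollary.

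The lower inequality requires no work and not even the constant $K$: since $\mathcal M(\mu,\nu)\subseteq\Pi(\mu,\nu)$ and $\epsilon=0$ for every $\pi\in\mathcal M(\mu,\nu)$, one has $\inf_{\pi\in\Pi(\mu,\nu)}\bigl(\int c\,d\pi+KL\epsilon\bigr)\le\inf_{\pi\in\mathcal M(\mu,\nu)}\int c\,d\pi=C(\mu,\nu)$. Consequently I do not anticipate any genuine obstacle in this corollary: all the substance is in Theorem~\ref{thm:W1}, and what remains is bookkeeping — matching the Lipschitz metric to the cost in $W_1^{nd}$, rescaling $\delta$ by the factor $L$, and the elementary finiteness observations above.
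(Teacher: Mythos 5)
Your proof is correct and follows exactly the route the paper intends: the paper gives no separate proof of this corollary but states it follows ``similarly to Corollary~\ref{cor:special}'', i.e.\ by the same Lipschitz/triangle-inequality argument with Proposition~\ref{thm:dispersion} replaced by Theorem~\ref{thm:W1}, which is precisely what you do (including the harmless rescaling of $\delta$ by $L$ and the observation that $\epsilon=0$ on $\mathcal{M}(\mu,\nu)$ for the lower bound).
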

Consequently $C(\mu,\nu)$ can be approximated by an optimal transport problem with cost function $\tilde{c}(x_1,x_2, \pi_{x_1})=c(x_1,x_2)+K(\delta,\nu)L|\int (y_2-x_1)\, \pi_{x_1}(dy_2)|$ for an $L$-Lipschitz-continuous cost function $c$.  The function  $\tilde{c}$ can be interpreted as a sum of a usual optimal transport cost and a weak optimal transport cost in the spirit of \cite{gozlan2017kantorovich}. The penalisation approach of Corollary \ref{cor:jourdain} is also akin to the numerical approximation results for the MOT problem obtained in \cite{Guo:2017txa}.\\

The dependence of $K$ on $\delta$ and $\mathfrak{P}$ in Theorem \ref{thm:W1} above is crucial, as the following counterexample shows:
\begin{example}\label{Ex:1}
Let us consider $$\mu^n=\nu^n=\frac{1}{n}\sum_{i=1}^n \delta_{i}.$$ Then trivially $\mu^n\preceq_c \nu^n$ for all $n\in \N$, $\mathfrak{P}=\{\nu_n\ : \ n\in \N\}$ is not uniformly integrable and the only martingale coupling $\tilde{\pi}^n\in \mathcal{M}(\mu^n,\nu^n)$ is supported on the diagonal $x_1=x_2$. We take $$\pi^n=\frac{1}{n} \left(\frac{\delta_{(1,1)}+\delta_{(1,2)}}{2}+\frac{\delta_{(n,n-1)}+\delta_{(n,n)}}{2}+\sum_{i=2}^{n-1} 
\frac{\delta_{(i,i-1)}+\delta_{(i,i+1)}}{2}\right),$$
which is ``almost" a martingale coupling. Then 
\begin{align*}
\inf_{\tilde{\pi}\in \mathcal{M}(\mu^n,\nu^n)} W^{nd}_1(\pi^n,\tilde{\pi})=\frac{n-1}{n} \qquad \text{and}\qquad \int \left| \int (x_2-x_1)\,\pi^n_{x_1}(dx_2)\right|\,\mu(dx_1)=\frac{1}{n}.  
\end{align*}
Thus for any $0\le\delta<1$ there exists no $K>0$, which fulfils \eqref{eq:rea}  simultaneously for all $(\pi^n)_{n\in \N}$.
\end{example}

\subsection{Continuity of MOT}

We now turn to our second main result, which establishes continuity of the map $(\mu,\nu)\mapsto C(\mu,\nu)$:
\begin{theorem}\label{thm:approx_simple}
Let $p\ge 1$ and let $(\mu^n)_{n\in \N}$, $(\nu^n)_{n\in \N}$ be two sequences of measures in $\mathcal{P}_p(\R)$ with \mbox{$\mu^n \preceq_c \nu^n$} for all $n \in \N$. Let $\mu,\nu\in \mathcal{P}(\R)$ be such that $\lim_{n\to \infty}W_p(\mu^n,\mu)=0$ and $\lim_{n\to \infty}W_p(\nu^n,\nu)=0$. Furthermore let $c: \R^2\to \R$ be continuous and such that $|c(x_1,x_2)|\le C(1+|x_1|^p+|x_2|^p)$ for some $ C\ge 0$. Then
\begin{align*}
\lim_{n \to \infty} C(\mu^n, \nu^n)=C(\mu, \nu).
\end{align*}
\end{theorem}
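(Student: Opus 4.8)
The plan is to establish the two inequalities $\limsup_n C(\mu^n,\nu^n)\le C(\mu,\nu)$ and $\liminf_n C(\mu^n,\nu^n)\ge C(\mu,\nu)$ separately. I will use throughout that $W_p$-convergence of probability measures on $\R^d$ is equivalent to weak convergence together with convergence of $p$-th moments (equivalently, uniform integrability of $|x|^p$), so that $\int\phi\,d\rho^n\to\int\phi\,d\rho$ whenever $\rho^n\to\rho$ in $W_p$ and $\phi$ is continuous with $|\phi(x)|\le \tilde{K}(1+|x|^p)$; in particular this applies to $c$. I also record two standard facts: (i) if $\rho^n\in\Pi(\mu^n,\nu^n)$ with $\mu^n\to\mu$ and $\nu^n\to\nu$ in $W_p$, then $\{\rho^n\}$ is tight and uniformly $p$-integrable on $\R^2$; and (ii) convex order is closed under $W_1$-convergence, so $\mu\preceq_c\nu$, whence $\mathcal{M}(\mu,\nu)\neq\emptyset$ and, all measures involved lying in $\mathcal{P}_p(\R)$, all the values $C(\mu^n,\nu^n)$, $C(\mu,\nu)$ are finite.

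\emph{The lower bound.} This is the soft, compactness direction, which does not invoke the new estimates. Passing to a subsequence along which $C(\mu^n,\nu^n)$ converges to its $\liminf$, pick $\pi^n\in\mathcal{M}(\mu^n,\nu^n)$ with $\int c\,d\pi^n\le C(\mu^n,\nu^n)+1/n$. By (i) the family $\{\pi^n\}$ is tight and uniformly $p$-integrable, so along a further subsequence $\pi^n\to\pi^*$ weakly, hence in $W_p$, with $\pi^*\in\Pi(\mu,\nu)$. The set $\mathcal{M}(\mu,\nu)$ is closed under this convergence — one tests the martingale property against $(x_2-x_1)\mathbf{1}_{\{x_1\le t\}}$ at continuity points $t$ of $\mu$, uniform integrability taking care of the unboundedness — so $\pi^*\in\mathcal{M}(\mu,\nu)$. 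Then $C(\mu,\nu)\le\int c\,d\pi^*=\lim_n\int c\,d\pi^n\le\liminf_n C(\mu^n,\nu^n)$, as desired.

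\emph{The upper bound.} Here Theorem~\ref{thm:W1} is the decisive tool. Fix $\eta>0$ and $\pi\in\mathcal{M}(\mu,\nu)$ with $\int c\,d\pi\le C(\mu,\nu)+\eta$. Transporting the marginals of $\pi$ onto $\mu^n,\nu^n$ produces a coupling in $\Pi(\mu^n,\nu^n)$ but breaks the martingale property, and \eqref{eq:rea} is exactly what repairs it. Precisely, let $\alpha^n\in\Pi(\mu^n,\mu)$ and $\beta^n\in\Pi(\nu,\nu^n)$ be $W_p$-optimal, and let $\Gamma^n$ be the law on $\R^4$ obtained by drawing $x_1\sim\mu$, then $x_1^n$ from the disintegration of $\alpha^n$ given $x_1$ and, \emph{independently}, $x_2$ from $\pi_{x_1}$, and finally $x_2^n$ from the disintegration of $\beta^n$ given $x_2$; put $\pi^n\coloneq\mathrm{law}_{\Gamma^n}(x_1^n,x_2^n)\in\Pi(\mu^n,\nu^n)$. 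The pair $((x_1^n,x_2^n),(x_1,x_2))$ is a coupling of $\pi^n$ and $\pi$, so $W_p(\pi^n,\pi)^p\le W_p(\mu^n,\mu)^p+W_p(\nu^n,\nu)^p\to 0$. For the martingale defect $\epsilon^n$ of $\pi^n$, write, $\mu^n$-a.s., $\int(x_2-x_1)\,\pi^n_{x_1}(dx_2)=\E_{\Gamma^n}[(x_2^n-x_2)+(x_2-x_1)+(x_1-x_1^n)\mid x_1^n]$; since $x_2$ is, under $\Gamma^n$, conditionally independent of $x_1^n$ given $x_1$, the middle term has conditional expectation $\E[x_2-x_1\mid x_1]=0$ $\mu$-a.s., while the other two contribute at most $W_p(\nu^n,\nu)+W_p(\mu^n,\mu)$ in $L^1(\mu^n)$ (by Jensen and $W_p$-optimality of $\alpha^n,\beta^n$), so $\epsilon^n\to 0$. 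The family $\{\nu^n\}\subseteq\mathcal{P}_1(\R)$ is uniformly integrable, so Theorem~\ref{thm:W1} applied with $\mathfrak{P}=\{\nu^n:n\in\N\}$ gives, for each $\delta>0$, a constant $K=K(\delta,\mathfrak{P})$ with $\inf_{\tilde\pi\in\mathcal{M}(\mu^n,\nu^n)}W_1^{nd}(\pi^n,\tilde\pi)\le K\epsilon^n+\delta$ for all $n$; letting $n\to\infty$ and then $\delta\downarrow 0$ shows this infimum tends to $0$. Choosing $\tilde\pi^n\in\mathcal{M}(\mu^n,\nu^n)$ nearly attaining it, we get $W_1(\pi^n,\tilde\pi^n)\le W_1^{nd}(\pi^n,\tilde\pi^n)\to 0$, hence $\tilde\pi^n\to\pi$ weakly; since $\{\tilde\pi^n\}$ is uniformly $p$-integrable (its marginals being $\mu^n,\nu^n$), in fact $W_p(\tilde\pi^n,\pi)\to 0$, so $C(\mu^n,\nu^n)\le\int c\,d\tilde\pi^n\to\int c\,d\pi\le C(\mu,\nu)+\eta$. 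Letting $\eta\downarrow0$ concludes.

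\emph{The main obstacle.} With the compactness direction routine, the heart of the argument is the upper bound, and within it the claim $\epsilon^n\to0$ for the glued coupling $\pi^n$: mere $W_p$-closeness of $\pi^n$ to the martingale $\pi$ says nothing about the disintegrations $\pi^n_{x_1}$, and it is the \emph{causal} nature of the gluing — the conditional independence that annihilates the $\E[x_2-x_1\mid x_1^n]$ term — that makes the defect vanish. Combined with the quantitative projection estimate \eqref{eq:rea}, which turns the small defect $\epsilon^n$ back into $W_1^{nd}$-proximity to $\mathcal{M}(\mu^n,\nu^n)$, this is precisely the ingredient absent from classical optimal transport and the reason the nested distance is brought in.
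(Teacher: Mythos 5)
Your proposal is correct and follows essentially the same route as the paper: the $\liminf$ direction by tightness/compactness of near-optimizers and closedness of the martingale constraint, and the $\limsup$ direction by gluing a near-optimal $\pi\in\mathcal{M}(\mu,\nu)$ through $W_p$-optimal couplings of the marginals (your construction of $\Gamma^n$ is exactly the paper's Lemma \ref{lemma:approx2a}) and then repairing the martingale defect with Theorem \ref{thm:W1}. The only cosmetic difference is that you verify the defect estimate inline via conditional independence and handle the $p$-growth of $c$ directly through $W_p$-convergence, whereas the paper cites Lemma \ref{lemma:approx2a} and first reduces to bounded continuous $c$.
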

This stability result extends the findings of \cite{Juillet:2016ksb} and \cite{Guo:2017txa}.  \cite{Juillet:2016ksb} proves continuity of the left-curtain coupling with respect to its marginals in a Wasserstein-type metric. In particular the results obtained only hold for cost functions satisfying the Spence-Mirrlees condition $c_{xyy}<0$. On the other hand \cite[Prop. 4.7]{Guo:2017txa} assume a Lipschitz-continuous cost function $c$ together with a finite second moment of $\nu$ and exploit a duality result for martingale optimal transport. Our result is more general and only considers the primal formulation of $C(\mu,\nu)$ given in \eqref{eq:mot}. It is akin to a similar stability result in optimal transport with the obvious modifications. The proof of Theorem \ref{thm:approx_simple} extends a natural construction given in \cite[proof of Proposition 4.2, p. 20]{Guo:2017txa}, which essentially couples the marginals $\mu^n, \nu^n$ with the disintegration $(\pi_{x_1})_{x_1\in \R}$. In a second step one then corrects the new coupling to account for the martingale constraint, which is achieved by an application of Theorem \ref{thm:W1}.

\subsection{An independent proof of the monotonicity principle for MOT}
As in classical optimal transport, it is desirable to characterise the sets $\Gamma\subseteq \R^2$, on which optimisers of $C(\mu,\nu)$ live. This has been achieved in the influential work \cite{Beiglbock:2016kt} and is known as a monotonicity principle for martingale optimal transport. To set up notation we recall here the notion of a competitor given in \cite{Beiglbock:2016kt}, which naturally extends the corresponding optimal transport formulation. We recall that $\alpha^1$ denotes push-forward measure of $\alpha$ under the canonical projection to the first coordinate $x=(x_1,x_2)\mapsto x_1$:
\begin{definition}\label{def:monotone}
Let $\alpha\in \mathcal{P}(\R^2)$. We say that $\alpha'\in \mathcal{P}(\R^2)$ is a competitor of $\alpha$, if $\alpha'$ has the same marginals as $\alpha$ and
\begin{align*}
\int y\,\alpha_x(dy)= \int y\,d\alpha'_x(dy)\qquad  \alpha^1\text{-a.s.}
\end{align*}
\end{definition}

The following monotonicity principle was first stated in \cite[Lemma 1.11, p. 49]{Beiglbock:2016kt}, where necessity and a partial sufficiency result was shown.

\begin{theorem}\label{thm:monotone}
Assume that $\mu,\nu\in \mathcal{P}_p(\R)$ satisfy $\mu \preceq_c \nu$ and that $c : \R^2\to \R$ is a continuous cost function such that $|c(x_1,x_2)|\le \tilde{K}(1+|x_1|^p)+|x_2|^p)$ for some $\tilde{K}\ge 0$ and $p> 1$. Then $\pi\in \mathcal{M}(\mu,\nu)$ is an optimiser of $C(\mu,\nu)$ if and only if there exists a Borel set $\Gamma$ with $\pi(\Gamma) = 1$ such that the following holds:\\
if $\alpha$ is a measure on $\R^2$ with $|\text{supp}(\alpha)|<\infty$ and $\text{supp}(\alpha)\subseteq\Gamma$, then we have $$ \int c(x_1,x_2)\, \alpha(dx_1,dx_2) \le  \int c(x_1,x_2)\,\alpha'(dx_1,dx_2)$$
for every competitor $\alpha'$ of $\alpha$.
\end{theorem}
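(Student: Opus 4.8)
The necessity direction is \cite[Lemma 1.11]{Beiglbock:2016kt}, so I will only sketch sufficiency. Suppose $\pi\in\mathcal{M}(\mu,\nu)$ is concentrated on a monotone Borel set $\Gamma$ as in the statement; the goal is to show $\int c\,d\pi=C(\mu,\nu)$. (One may assume $\nu\in\mathcal{P}_p(\R)$, hence $\mu\in\mathcal{P}_p(\R)$ by convex order, so that all $c$-integrals below are finite; the general case reduces to this.) The plan rests on one elementary observation and one construction. The observation: for \emph{any} $\rho,\sigma$, every $\tilde\pi\in\mathcal{M}(\rho,\sigma)$ is a competitor of every $\alpha\in\mathcal{M}(\rho,\sigma)$ in the sense of Definition \ref{def:monotone} — both have marginals $\rho,\sigma$ and conditional barycentre $\int y\,\alpha_x(dy)=x$ — so if moreover $\alpha$ is \emph{finitely supported} with $\mathrm{supp}(\alpha)\subseteq\Gamma$ and $\Gamma$ monotone, then $\int c\,d\alpha\le\int c\,d\tilde\pi$ for all $\tilde\pi\in\mathcal{M}(\rho,\sigma)$, i.e.\ $\alpha$ is optimal: $\int c\,d\alpha=C(\rho,\sigma)$. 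The construction: approximate $\pi$ in $W_p$ by finitely supported martingale couplings that still live on $\Gamma$. Granting the construction, Theorem \ref{thm:approx_simple} finishes the proof.

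The construction is the heart of the matter. The plan is to produce finitely supported $\mu^N,\nu^N$ with $\mu^N\preceq_c\nu^N$, $W_p(\mu^N,\mu)\to0$, $W_p(\nu^N,\nu)\to0$, and martingale couplings $\alpha^N\in\mathcal{M}(\mu^N,\nu^N)$ with $\mathrm{supp}(\alpha^N)\subseteq\Gamma$ and $W_p(\alpha^N,\pi)\to0$. Disintegrate $\pi=\mu\otimes\pi_{x_1}$: for $\mu$-a.e.\ $x_1$ the kernel $\pi_{x_1}$ is concentrated on the fibre $\Gamma_{x_1}=\{x_2:(x_1,x_2)\in\Gamma\}$ and has barycentre $x_1$. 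Since $x_1\mapsto\pi_{x_1}$ is only measurable, I would first use Lusin's theorem to make it continuous off a set of arbitrarily small $\mu$-mass; on the remainder I would partition into short intervals $I_j$, pick $\xi_j\in I_j$ with $\pi_{\xi_j}(\Gamma_{\xi_j})=1$, and replace $\mathbf{1}_{I_j}\,\mu\otimes\pi_{x_1}$ by $\mu(I_j)\,\delta_{\xi_j}\otimes\hat\pi^N_{\xi_j}$, where $\hat\pi^N_{\xi_j}$ is a finitely supported approximation of $\pi_{\xi_j}$ concentrated on $\Gamma_{\xi_j}$ with barycentre \emph{exactly} $\xi_j$ (a slight reweighting of a fine discretisation of $\pi_{\xi_j}$ does this, since $\pi_{\xi_j}$ already has barycentre $\xi_j$); the small-mass leftover is handled by iterating. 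By construction $\alpha^N$ is finitely supported, $\mathrm{supp}(\alpha^N)\subseteq\Gamma$, and $\int y\,\alpha^N_x(dy)=x$, i.e.\ $\alpha^N\in\mathcal{M}(\mu^N,\nu^N)$; refining the partitions and keeping track of tails gives $W_p(\alpha^N,\pi)\to0$. This step is also where I expect the main obstacle to lie: one has to keep the discretisation simultaneously \emph{on} $\Gamma$ \emph{and} a martingale (which forces working with a common fibre $\Gamma_{\xi_j}$ on each $I_j$, hence the Lusin regularisation, and the exact barycentre bookkeeping for $\hat\pi^N_{\xi_j}$), and then one must upgrade weak convergence of $\alpha^N$ to $W_p$-convergence — which is where the polynomial growth of $c$ enters, via uniform integrability of $1+|x_1|^p+|x_2|^p$.

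Given such a sequence, the conclusion is soft. Each $\alpha^N$ is a finitely supported martingale coupling supported on the monotone set $\Gamma$, so by the observation above $\int c\,d\alpha^N=C(\mu^N,\nu^N)$. Theorem \ref{thm:approx_simple} applies to $(\mu^N,\nu^N)$, since $\mu^N\preceq_c\nu^N$ lie in $\mathcal{P}_p(\R)$, converge to $\mu$ and $\nu$ in $W_p$, and $c$ is continuous with $|c|\le\tilde K(1+|x_1|^p+|x_2|^p)$; it gives $C(\mu^N,\nu^N)\to C(\mu,\nu)$. On the other hand $W_p(\alpha^N,\pi)\to0$ together with the growth bound on $c$ gives $\int c\,d\alpha^N\to\int c\,d\pi$. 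Hence
\[
\int c\,d\pi=\lim_{N\to\infty}\int c\,d\alpha^N=\lim_{N\to\infty}C(\mu^N,\nu^N)=C(\mu,\nu),
\]
so $\pi$ is an optimiser for $C(\mu,\nu)$, which is the claimed sufficiency.
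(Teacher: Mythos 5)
Your proof is correct and follows essentially the same route as the paper: finite-support martingale approximation of $\pi$ on $\Gamma$ (the paper's Lemma \ref{lemma:approx1}, which your Lusin/discretisation/barycentre-correction sketch reproduces) combined with the stability result of Theorem \ref{thm:approx_simple}. The only difference is presentational: the paper argues by contraposition (if $\pi$ is not optimal, the approximants yield a finitely supported measure on $\Gamma$ with a strictly better competitor), whereas you run the same two ingredients directly, observing that finite optimality of $\Gamma$ forces each approximant to be exactly optimal for its own marginals and then passing to the limit.
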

The proof of necessity was later simplified in \cite{Beiglbock:2019ufa} and essentially relies on the idea to select competitors in a measurable way. We give here an independent proof of sufficiency, which uses the stability result stated in Theorem \ref{thm:approx_simple}. The idea is to argue by contraposition: take any martingale measure $\pi\in \mathcal{M}(\mu,\nu)$, any set $\Gamma \subseteq \R^2$ such that $\pi(\Gamma)=1$ and assume $\pi$ is not optimal for $C(\mu,\nu)$. By an approximation result given in Lemma \ref{lemma:approx1} it is possible to find martingale measures $\pi^n$ finitely supported on $\Gamma$ such that $\lim_{n\to \infty} W_p(\pi^n, \pi)=0$. Let us denote the first marginal of $\pi^n$ by $\mu^n$ and the second marginal by $\nu^n$. As $\pi$ is not optimal and as $(\mu,\nu)\mapsto C(\mu,\nu)$ is continuous, there exists a number $n\in \N$ and a competitor $\pi'\in \mathcal{M}(\mu^n,\nu^n)$ with cost $\int c\,d\pi'$ strictly smaller than $\int c\,d\pi^n$, showing that $\Gamma$ is not finitely optimal.\\
In particular this enables us to show sufficiency for continuous functions of polynomial growth similarly to \cite{Griessler:2016wga}, who uses a splitting property for cyclically monotone sets and the decomposition into irreducible components established in \cite{Beiglbock:2016kt}.

\section{Generic approximation results}\label{sec:generic}
Let us now list several approximation results for the nested distance $W_p^{nd}$ and the Wasserstein distance $W_p$, which we will use throughout the paper. As these do not immediately follow from the isometric embedding of the space $(\mathcal{P}_p(\R),W^{nd}_p)$ into a Wasserstein space of nested distributions obtained in \cite{BackhoffVeraguas:2017ww}, we adopt a constructive self-contained approach. The proofs are mainly technical and are thus deferred to the appendix. We also refer to \cite{Alfonsi:2017wl} for an algorithmic approximation of $\mu$, $\nu$ by finitely supported measures, such that the convex order for the approximating measures is retained. Throughout this section we fix some $p\ge 1$.

\begin{lemma}\label{lemma:approx1}\mbox{}
Let $\mu,\nu\in \mathcal{P}_p(\R),$ $\pi \in \Pi(\mu, \nu)$  and $\kappa>0$.  Let $\Gamma\subseteq \R^2$ be a Borel set such that $\pi(\Gamma)=1$. 
\begin{enumerate}[(i)]
\item There exists a measure $\hat{\pi}$, which is finitely supported on $\Gamma$, such that $W_p^{nd}(\pi, \hat{\pi})\le \kappa$. Furthermore
\begin{align}\label{eq:nachtrag}
\int_{\{x_1\ge x\}} (x_2-x_1)\,\hat{\pi}(dx_1,dx_2)\ge \int_{\{x_1\ge x\}} (x_2-x_1)\,\pi(dx_1,dx_2)-\kappa
\end{align}
for all $x\in \mathrm{supp}(\hat{\pi}^1)$.
\item If $\pi\in \mathcal{M}(\mu,\nu)$, then $\hat{\pi}$ can be chosen to be a martingale measure.
\end{enumerate}
\end{lemma}

\begin{lemma}\label{cor:approx}
Let $\mu,\nu\in \mathcal{P}_p(\R),$ $\mu \preceq_c\nu$, $\pi\in \Pi(\mu,\nu)$ and $\kappa>0$. Then there exists a finitely supported measure $\bar{\pi}\in \Pi(\bar{\mu},\bar{\nu})$ such that $\bar{\mu}\preceq_c \bar{\nu}$ and $W_p^{nd}(\pi,\bar{\pi})\le \kappa$.
\end{lemma}

\begin{lemma}\label{lemma:convexity}
Let $\pi^n\in \Pi(\mu^n,\nu^n)$ be a sequence of measures in $\mathcal{P}_1(\R^2)$ and let $(\tilde{\pi}^n)_{n \in \N}$ be another sequence satisfying $\tilde{\pi}^n \in \Pi(\mu^n, \rho^n)$ for all $n\in \N$ and some $(\rho^n)_{n\in \N}$ with $\rho^n \in \mathcal{P}_1(\R)$ for all $n\in \N$. Then
\begin{align*}
W_1^{nd}\left(\frac{1}{n}\sum_{i=1}^n \pi^i,\frac{1}{n}\sum_{i=1}^n \tilde{\pi}^i\right)\le \frac{1}{n}\sum_{i=1}^n \int W_1(\pi^i_{x_1}, \tilde{\pi}^i_{x_1})\,\mu^i(dx_1).
\end{align*}
\end{lemma}


\begin{lemma}\label{lemma:approx2a}
Let $\mu,\nu,\tilde{\mu},\tilde{\nu}$ be elements of $\mathcal{P}_p(\R)$, $\mu\preceq_c\nu$ and let $\pi \in \mathcal{M}(\mu,\nu)$. Then there exists $\tilde{\pi}\in \Pi(\tilde{\mu},\tilde{\nu})$ such that $W_{p}^{p}(\pi, \tilde{\pi}) \leq W_{p}^{p}(\mu, \tilde{\mu})+W_{p}^{p}(\nu, \tilde{\nu})$ and 
\begin{align}\label{eq:gaoyue2}
\int \left|\int (y_2-y_1)\,\tilde{\pi}_{y_1}(dy_2)\right|\,\tilde{\mu}(dy_1)\le W_1(\mu, \tilde{\mu})+W_1(\nu,\tilde{\nu}).
\end{align}
In particular we have $W_p(\pi,\tilde{\pi})\le W_p(\mu,\tilde{\mu})+W_p(\nu,\tilde{\nu}) $.
\end{lemma}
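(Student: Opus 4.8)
The plan is to build $\tilde{\pi}$ by a glueing (composition) construction. Let $\alpha\in\Pi(\mu,\tilde{\mu})$ be an optimal coupling for $W_p(\mu,\tilde{\mu})$, with Borel disintegration $(\alpha_{x_1})_{x_1\in\R}$ against $\mu$, and let $\beta\in\Pi(\nu,\tilde{\nu})$ be an optimal coupling for $W_p(\nu,\tilde{\nu})$, with Borel disintegration $(\beta_{x_2})_{x_2\in\R}$ against $\nu$. Define a probability measure $Q$ on $\R^4$, with coordinates written $(x_1,x_2,y_1,y_2)$, by
\[
Q(dx_1,dx_2,dy_1,dy_2)=\pi(dx_1,dx_2)\,\alpha_{x_1}(dy_1)\,\beta_{x_2}(dy_2),
\]
and let $\tilde{\pi}$ be the law of $(y_1,y_2)$ under $Q$. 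Integrating out $y_2$ and then $x_2$ shows that the $y_1$-marginal of $Q$ equals $\int\mu(dx_1)\,\alpha_{x_1}(dy_1)=\tilde{\mu}$; integrating out $y_1$ and then $x_1$ shows that the $y_2$-marginal equals $\int\nu(dx_2)\,\beta_{x_2}(dy_2)=\tilde{\nu}$; hence $\tilde{\pi}\in\Pi(\tilde{\mu},\tilde{\nu})$.

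For the $W_p$-estimate, the image of $Q$ under $(x_1,x_2,y_1,y_2)\mapsto\big((x_1,x_2),(y_1,y_2)\big)$ is a coupling of $\pi$ and $\tilde{\pi}$; since $Q$ restricts to $\alpha$ in the variables $(x_1,y_1)$ and to $\beta$ in the variables $(x_2,y_2)$, this gives
\[
W_p(\pi,\tilde{\pi})^p\le\int\big(|x_1-y_1|^p+|x_2-y_2|^p\big)\,dQ=W_p(\mu,\tilde{\mu})^p+W_p(\nu,\tilde{\nu})^p\le\big(W_p(\mu,\tilde{\mu})+W_p(\nu,\tilde{\nu})\big)^p,
\]
where the last step uses $a^p+b^p\le(a+b)^p$ for $a,b\ge0$ and $p\ge1$. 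Taking $p$-th roots yields the first claim.

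For \eqref{eq:gaoyue2}, rewrite the left-hand side as $\int\big|\,\E_Q[\,y_2-y_1\mid y_1\,]\,\big|\,\tilde{\mu}(dy_1)$ and decompose $y_2-y_1=(y_2-x_2)+(x_2-x_1)+(x_1-y_1)$. The key point is the middle term: under $Q$ the conditional law of $x_2$ given $(x_1,y_1)$ is $\pi_{x_1}$ (the coordinate $y_1$, drawn from $\alpha_{x_1}$, carries no information about $x_2$), so $\E_Q[\,x_2-x_1\mid x_1,y_1\,]=\int(x_2-x_1)\,\pi_{x_1}(dx_2)=0$ by the martingale property of $\pi$, whence $\E_Q[\,x_2-x_1\mid y_1\,]=0$. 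Bounding the remaining two terms by their absolute values and applying the triangle inequality together with Jensen's inequality,
\[
\int\big|\,\E_Q[\,y_2-y_1\mid y_1\,]\,\big|\,\tilde{\mu}(dy_1)\le\int|y_2-x_2|\,\beta(dx_2,dy_2)+\int|x_1-y_1|\,\alpha(dx_1,dy_1)\le W_p(\nu,\tilde{\nu})+W_p(\mu,\tilde{\mu}),
\]
the last inequality again by Jensen, since the $L^1$-cost of an optimal $W_p$-coupling is at most $W_p$.

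The construction and both estimates are short; the only point requiring care is the measure-theoretic bookkeeping behind the conditioning — existence of Borel disintegrations $\alpha_{x_1},\beta_{x_2},\pi_{x_1}$ and the product structure of $Q$ — which is routine. Conceptually, the glueing is arranged precisely so that the martingale increment $x_2-x_1$ of $\pi$ reappears inside the barycentre defect of $\tilde{\pi}$ and is annihilated in conditional expectation, leaving only the two transport errors.
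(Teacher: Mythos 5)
Your construction is exactly the gluing measure the paper uses (the paper's $\rho$ with optimal couplings $\zeta,\eta$ disintegrated against $\mu$ and $\nu$), and both estimates are carried out correctly, so the proposal follows essentially the same route. The only difference is that you spell out the proof of \eqref{eq:gaoyue2} — the decomposition $y_2-y_1=(y_2-x_2)+(x_2-x_1)+(x_1-y_1)$ with the martingale term vanishing under conditioning on $y_1$ — where the paper simply cites the triangle-inequality argument from Guo--Ob{\l}{\'o}j; your conditional-independence justification of $\E_Q[x_2-x_1\mid y_1]=0$ is valid.
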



\begin{lemma}\label{lemma:uniform_integrability}
Let $\mu,\nu\in \mathcal{P}_p(\R)$. Let $(\pi^n)_{n\in \N}$ be a sequence of  measures satisfying $\pi^n\in \Pi(\mu^n,\nu^n)$ for all $n\in \N$ and let $(\tilde{\pi}^n)_{n \in \N}$ be another sequence satisfying $\tilde{\pi}^n \in \Pi(\mu^n, \nu^n)$ for all $n\in \N$. Let $\lim_{n \to \infty} W_p(\mu^n, \mu)=\lim_{n \to \infty} W_p(\nu^n, \nu)=0$ and $\lim_{n \to \infty}W_1(\pi^n, \tilde{\pi}^n)=0$ . Then for any continuous function $c:\R \times \R\to \R$ satisfying $|c(x_1, x_2)|\le C(1+|x_1|^p+|x_2|^p)$ for some $C\ge 0$ we have
\begin{align*}
\lim_{n \to \infty} \left(\int c(x_1, x_2)\,\pi^n(dx_1, dx_2)- \int c(x_1, x_2)\, \tilde{\pi}^n(dx_1, dx_2)\right)=0,
\end{align*}
in particular $\lim_{n \to \infty} W_p(\pi^n, \tilde{\pi}^n)=0$.
\end{lemma}

\begin{lemma}\label{lem:aap_1}
For measures $\pi,\tilde{\pi}\in \mathcal{P}_1(\R^2)$ we have
\begin{align*}
\left|\int \left|\int (x_2-x_1)\,\pi_{x_1}(dx_2) \right|\,\pi^1(dx_1)- \int \left| \int (y_2-y_1)\,\tilde{\pi}_{y_1}(dy_2)\right|\,\tilde{\pi}^1(dy_1)\right|\le W_1^{nd}(\pi,\tilde{\pi}).
\end{align*}
Similarly
\begin{align*}
\left|\int \left(\int (x_2-x_1)\,\pi_{x_1}(dx_2) \right)^-\,\pi^1(dx_1)- \int \left( \int (y_2-y_1)\,\tilde{\pi}_{y_1}(dy_2)\right)^-\,\tilde{\pi}^1(dy_1)\right|\le W_1^{nd}(\pi,\tilde{\pi}).
\end{align*}
\end{lemma}

\begin{lemma}\label{lem:aap2}
Let $(\pi^n)_{n\in \N}$ be a sequence of measures in $\mathcal{P}_1(\R^2)$ and let $\pi\in \mathcal{P}_1(\R^2)$ with marginals $\mu\preceq \nu$, such that $\lim_{n\to \infty}W_1^{nd}(\pi^n, \pi)=0$. Assume that
\begin{align*}
\lim_{n\to \infty} \int \left( \int (x_2-x_1)\,\pi^n_{x_1}(dx_2)\right)^- (\pi^n)^1(dx_1)=0.
\end{align*}
Then $\pi\in \mathcal{M}(\mu,\nu)$.
\end{lemma}

\section{Proof of Proposition \ref{thm:dispersion}}\label{sec:thm_dispersion}

\subsection{Proof of Proposition \ref{thm:dispersion} for finitely supported measures}\label{sec:dispersion_finite}
Throughout this section we fix two finitely supported measures $\mu, \nu\in \mathcal{P}(\R)$ and a number $c\ge 0$. To prepare for the general case treated in Section \ref{sec:dispersion_general} we introduce the following generalised barycentre assumption for $c$:

\begin{assumption}[Generalised barycentre dispersion assumption]\label{def:dispersion2}
Let $\pi\in \Pi(\mu,\nu)$. For all $x \in \R$
\begin{align}\label{eq:generaliseddispersion}
\int_{\{x_1 \ge x\}} (x_2-x_1)\, \pi(dx_1, dx_2) \ge -c\mu([x,\infty)).
\end{align}
\end{assumption}

The main aim of the proof of Proposition \ref{thm:dispersion} is to show that under Assumption \ref{def:dispersion}, there exists a martingale measure $\tilde{\pi}\in \mathcal{M}(\mu,\nu)$ such that
\begin{align}\label{eq:aim}
\int W_1(\tilde{\pi}_{x_1}, \pi_{x_1})\,\mu(dx_1)\le \int \left|\int (x_2-x_1)\pi_{x_1}(dx_2)\right| \mu(dx_1)
\end{align}
holds. We remark that, together with Lemma \ref{lem:upperbound}, \eqref{eq:aim} implies Proposition \ref{thm:dispersion}, as the lhs of \eqref{eq:aim} majorizes $W_1^{nd}(\pi, \tilde{\pi})$. In this section we work with a finitely supported measure $\pi\in \Pi(\mu,\nu)$. We show that under Assumption \ref{def:dispersion2} we can find an ``approximate martingale measure" $\tilde{\pi}\in \Pi(\mu,\nu)$, such that
\begin{align*}
\int W_1(\tilde{\pi}_{x_1}, \pi_{x_1})\,\mu(dx_1)\le \int \left|\int (x_2-x_1)\pi_{x_1}(dx_2)+c\right| \mu(dx_1)
\end{align*}
holds. We will see that this implies Proposition \ref{thm:dispersion} for finitely supported $\pi\in \Pi(\mu,\nu)$. We will then extend the result to general measures $\pi$ in Section \ref{sec:dispersion_general}.\\
Let us start by giving the following definition:

\begin{definition}\label{def:j}
For a sequence of measures $(\pi^n)_{n\in \N}$ we set
\begin{align*}
X_{1}^{n,+} &\coloneq \left\{ x_1\in \mathrm{supp}(\mu) \ \bigg|\ \int (x_2-x_1)\,\pi^n_{ x_1}(dx_2)>-c\right\},\\
X_1^{n,0} &\coloneq  \left\{ x_1\in \mathrm{supp}(\mu) \ \bigg|\   \int (x_2-x_1)\,\pi^n_{x_1}(dx_2)= -c\right\},\\
X_{1}^{n,-} &\coloneq \left\{ x_1\in \mathrm{supp}(\mu) \ \bigg|\ \int (x_2-x_1)\,\pi^n_{x_1}(dx_2)<-c\right\}.
\end{align*}
for all $n\in \N$.
Furthermore
\begin{align*}
X_{2}^{n,+}&\coloneq \bigcup_{x_{1}^+\in X_{1}^{n,+}}\mathrm{supp}(\pi^n_{x_{1}^+}), \quad X_{2}^{n,0}\coloneq \bigcup_{x_{1}^0\in X_{1}^{n,0}} \mathrm{supp}(\pi^n_{x_{1}^0})\quad \text{and}\\
 X_{2}^{n,-}&\coloneq \bigcup_{x_{1}^-\in X_{1}^{n,-}}\mathrm{supp}(\pi^n_{ x_1^-}).
\end{align*}
\end{definition}

Now we fix a measure $\pi\in \Pi(\mu,\nu)$ satisfying Assumption \ref{def:dispersion2} for $c\ge0$, set $\pi^{(0)}\coloneq \pi$ and assume that $X_1^{0,-}\neq \emptyset$. The general idea formalised in this section will be to iteratively build measures $\pi^{(j)}\in \Pi(\mu,\nu)$ such that $X_1^{j,-}$ is decreasing to $\emptyset$. This is achieved by switching atoms in the supports of $(\pi_{x_{1}})_{x_1\in \text{supp}(\mu)}$ without changing the marginal constraints. More specifically we will use Algorithm \ref{alg:2} given below, which is written in a slightly elaborate form in order to prepare for the more complicate case of Algorithm \ref{alg:1} discussed in Section \ref{sec:W1}. In the definition of the algorithm, we will use Definition \ref{def:j} for the sequence of measures $(\pi^{(n)})_{n\in \N}$, which are constructed iteratively.

\begin{algorithm}\label{alg:2}
Set $j=0$.
\begin{enumerate}[(i)]
\item  Define $x_1^-(j)\coloneq \max(X_1^{j,-})$ and $x_2^-(j)\coloneq \min(\mathrm{supp}(\pi^{(j)}_{x_1^-(j)}))$. Set $x_1^+(j)\coloneq\max(X_1^{j,+})$ and $x_2^+(j)\coloneq \max(\mathrm{supp}(\pi^{(j)}_{x_1^{+}(j)}))$.
\item Define 
\begin{align}\label{eq:lambdadef}
\begin{split}
\lambda^{(j)}&\coloneq \Bigg[\mu(x_1^-(j))\min\left\{\tilde{\lambda}>0\ \bigg| \ \int (x_2-x_1^-(j))\,\pi^{(j)}_{x_1^-(j)}(dx_2)+\tilde{\lambda}(x_2^+(j)-x_2^-(j))\ge -c\right\}\\
&\qquad\wedge \pi^{(j)}(x_1^-(j),x_2^-(j))\\
&\qquad \wedge \mu(x_1^+(j))\min\left\{\tilde{\lambda}>0\ \bigg| \ \int (x_2-x_1^+(j))\,\pi^{(j)}_{x_1^+(j)}(dx_2)+\tilde{\lambda}(x_2^-(j)-x_2^+(j))\le -c\right\}\\
&\qquad\wedge\pi^{(j)}(x_1^+(j),x_2^+(j))\Bigg]\cdot (x_2^+(j)-x_2^-(j)).
\end{split}
\end{align}
\item Define $\rho^{(j)}\in \mathcal{P}(\R^2\times \R^2)$ via $$\rho^{(j)}(dx_1, dx_2, dy_1, dy_2)\coloneq (\delta_{x_1}(dy_1)\otimes  \mu(dx_1)) \otimes \rho^{(j)}_{(x_1, y_1)}(dx_2,dy_2),$$ where
\begin{align*}
\rho^{(j)}_{(x_1,x_1)}&\coloneq(x_2,x_2)_\ast \pi^{(j)}_{x_1}\quad \text{ for all }x_1\in \mathrm{supp}(\mu)\setminus \{x_1^-(j),x_1^+(j)\}\\
\rho^{(j)}_{(x_1^-(j),x_1^-(j))}&\coloneq(x_2,x_2)_\ast\pi^{(j)}_{x_1^-(j)}\\
&+\frac{\lambda^{(j)}}{\mu(x_1^-(j))(x_2^+(j)-x_2^-(j))}(\delta_{(x_2^-(j),x_2^+(j))}-\delta_{(x_2^-(j),x_2^-(j))})\\
\rho^{(j)}_{(x_1^+(j),x_1^+(j))}&\coloneq(x_2,x_2)_\ast\pi^{(j)}_{x_1^+(j)}\\
&+\frac{\lambda^{(j)}}{\mu(x_1^+(j))(x_2^+(j)-x_2^-(j))}(\delta_{(x_2^+(j),x_2^-(j))}-\delta_{(x_2^+(j),x_2^+(j))}).
\end{align*}
Set $\pi^{(j+1)}(dy_1, dy_2)\coloneq \int \rho^{(j)}(dx_1, dx_2, dy_1, dy_2)$.
\end{enumerate}
Now set $j=j+1$ and iterate \textit{(i)}-\textit{(iii)}. Terminate if $X_1^{j,-}=\emptyset$.
\end{algorithm}

\begin{remark}\label{rem:explained}
We note that the above algorithm formalises the intuition of switching barycentre mass $\lambda^{(j)}$ between the points $(x^{-}_1(j), x_2^-(j))$ and $(x^{+}_1(j), x_2^+(j))$. In the definition of $\lambda^{(j)}$ we pay tribute to the following constraints:
\begin{itemize}
\item After switching masses, $x^{-}_1(j)\in X_1^{j+1,-}\cup X_1^{j+1,0}$ should hold: this explains the first term in \eqref{eq:lambdadef}.
\item We cannot switch more probability mass than available at the point $(x_1^-(j), x_2^-(j))$, which is  exactly $\pi^{(j)}(x_1^-(j),x_2^-(j))$: this explains the second term in in \eqref{eq:lambdadef}.
\item After switching masses, $x^{+}_1(j)\in X_1^{j+1,+}\cup X_1^{j+1,0}$ should hold: this explains the third term in \eqref{eq:lambdadef}.
\item  We cannot switch more probability mass than available at the point $(x_1^+(j), x_2^+(j))$, which is  exactly $\pi^{(j)}(x_1^+(j),x_2^+(j))$: this explains the fourth term in in \eqref{eq:lambdadef}.
\end{itemize}
In particular it is important to realise that one of these constraints will be strictly binding, i.e. after carrying out step $j$ we have one (or more) of the following:
\begin{itemize}
\item We have $x^{-}_1(j)\in X_1^{j+1,0}$.
\item We have $\pi^{(j+1)}(x_1^-(j),x_2^-(j))=0$, which means that we have deleted the left-most point $x_2^-(j)$ of the support of $\pi^{(j)}_{x_1^-(j)}$ (we might have added a new point $x_2^+(j)$ to the support of $\pi^{(j)}_{x_1^-(j)}$. We will see in Lemma \ref{lem:W1_1} that $x_2^+(j)>x_2^-(j)$).
\item We have $x^{+}_1(j)\in X_1^{j+1,0}$.
\item  We have $\pi^{(j+1)}(x_1^+(j),x_2^+(j))=0$,  which means that we have deleted the right-most point of the support of $\pi^{(j)}_{x_1^+(j)}$ (we might have added a new point $x_2^-(j)$ to the support of $\pi^{(j)}_{x_1^+(j)}$. We will see in Lemma \ref{lem:W1_1} that $x_2^-(j)<x_2^+(j)$).
\end{itemize}
\end{remark}

\begin{definition}
We denote the number of steps until termination of Algorithm \ref{alg:2} by $N\in \N\cup \{\infty\}$.
\end{definition}

We note that $\rho^{(j)}$ defines a (bicausal) transport plan between $\pi^{(j)}$ and $\pi^{(j+1)}$, which we will later use to bound $W_1^{nd}(\pi^{(0)}, \pi^{(N)})$.

\begin{remark}\label{rem:rho2}
To simplify notation, we will however mostly work with the measure $\pi^{(j+1)}$ in the proofs below. In particular we will use the fact that $\pi^{(j+1)}$  has first marginal $\mu$ and 
\begin{align*}
\pi^{(j+1)}_{x_1}&=\pi^{(j)}_{x_1}\quad \text{ for all }x_1\in \mathrm{supp}(\mu)\setminus \{x_1^-(j),x_1^+(j)\}\\
\pi^{(j+1)}_{x_1^-(j)}&=\pi^{(j)}_{x_1^-(j)}+\frac{\lambda^{(j)}}{\mu(x_1^-(j))(x_2^+(j)-x_2^-(j))}(\delta_{x_2^+(j)}-\delta_{x_2^-(j)})\\
\pi^{(j+1)}_{x_1^+(j)}&=\pi^{(j)}_{x_1^+(j)}+\frac{\lambda^{(j)}}{\mu(x_1^+(j))(x_2^+(j)-x_2^-(j))}(\delta_{x_2^-(j)}-\delta_{x_2^+(j)}).
\end{align*}
Nevertheless the definition of $\rho^{(j)}$ will be crucial for the estimation of $W_1^{nd}(\pi, \pi^{(j)})$.
\end{remark}

\begin{lemma}\label{lem:W1_1}
In every step $0\le j\le N$ of Algorithm \ref{alg:2} we have $x_1^+(j)> x_1^-(j)$ and  $x_2^+(j)> x_2^-(j)$. Furthermore the measures $\pi^{(j)}$ satisfy the generalised barycentre dispersion assumption \ref{def:dispersion2} for $c$ and $\pi^{(j)}\in \Pi(\mu,\nu)$. 
\end{lemma}

\begin{proof}
By Assumption \ref{def:dispersion2} there exists $x_1>x_1^-(0)$ such that $x_1\in X_1^{0,+}$, in particular $x_1^+(0)>x_1^-(0)$, which also implies $x_2^+(0)>x_1^+(0)-c>x_1^-(0)-c> x_2^-(0)$. We now check that the measure $\pi^{(1)}$ satisfies Assumption \ref{def:dispersion2}. Note that by Algorithm \ref{alg:2}, equation \eqref{eq:generaliseddispersion} trivially holds for $\pi^{(1)}$ and $x>x_1^+(0)$. By definition of $\lambda^{(0)}$ we also have
\begin{align*}
\int (x_2-x_1^+(0))\,\pi^{(1)}_{x_1^+(0)}(dx_2)\ge -c,
\end{align*}
which implies that \eqref{eq:generaliseddispersion} holds for all $x\ge x_1^+(0)$ and by definition of $x_1^-(0)$ then also for all $x> x_1^-(0)$. Next we note that for all $x\le x_1^-(0)$
\begin{align*}
\int_{\{x_1\ge x\}} (x_2-x_1)\,\pi^{(1)}(dx_1,dx_2)&=\int_{\{x_1\ge x\}} (x_2-x_1)\,\pi^{(0)}(dx_1,dx_2)\\
&+\mu(x_1^-(0))\frac{\lambda^{(0)}}{\mu(x_1^-(0))(x_2^+(0)-x_2^-(0))}(x_2^+(0)-x_2^-(0)) \\
&+\mu(x_1^+(0))\frac{\lambda^{(0)}}{\mu(x_1^+(0))(x_2^+(0)-x_2^-(0))}(x_2^-(0)-x_2^+(0)) \\
&=\int_{\{x_1\ge x\}} (x_2-x_1)\,\pi^{(0)}(dx_1,dx_2)\ge -c\mu([x,\infty)),
\end{align*}
so the claim follows. Lastly we show that $\pi^{(1)}\in \Pi(\mu,\nu)$. As noted before, the first marginal of $\pi^{(1)}$ is $\mu$, so we only need to check the second marginal. For this we take a Borel set $A\subseteq \R$ and calculate
\begin{align*}
\int_{\R\times A} \pi^{(1)}(dx_1, dx_2)&=\int_{\R} \int_A\pi^{(1)}_{x_1}(dx_2) \,\mu(dx_1)\\
&=\int_{\R} \int_A\pi^{(0)}_{x_1}(dx_2) \,\mu(dx_1)\\
&+\mu(x_1^-(0))\frac{\lambda^{(0)}}{\mu(x_1^-(0))(x_2^+(0)-x_2^-(0))}(\delta_{x_2^+(0)}(A)-\delta_{x_2^-(0)}(A))\\
&+\mu(x_1^+(0))\frac{\lambda^{(0)}}{\mu(x_1^+(0))(x_2^+(0)-x_2^-(0))}(\delta_{x_2^-(0)}(A)-\delta_{x_2^+(0)}(A))\\
&=\int_{\R} \int_A\pi^{(0)}_{x_1}(dx_2) \,\mu(dx_1)\\
&+\frac{\lambda^{(0)}}{x_2^+(0)-x_2^-(0)}(\delta_{x_2^+(0)}(A)-\delta_{x_2^-(0)}(A)+\delta_{x_2^-(0)}(A)-\delta_{x_2^+(0)}(A))\\
&=\int_{\R} \int_A\pi^{(0)}_{x_1}(dx_2) \,\mu(dx_1)=\int_{\R\times A} \pi^{(0)}(dx_1, dx_2).
\end{align*}
Applying the above arguments inductively concludes the proof.
\end{proof}

Recalling Remark \ref{rem:explained} the following lemma is immediate:

\begin{lemma}\label{lem:W1_2}
Algorithm \ref{alg:2} terminates after at most $N \le |\mathrm{supp}(\mu)|(1+|\mathrm{supp}(\nu)|)$ steps.
\end{lemma}

\begin{proof}
For $j\in \N_0:=\N\cup \{0\}$ and all $x_1\in X_1^{0,+}$ we define the set 
\begin{align*}
I^{(j)}(x_1):=\{ x_2\in \mathrm{supp}(\nu) \ | \ x_2\le \max(\mathrm{supp}(\pi^{(j)}_{x_1}) )\}.
\end{align*}
Similarly for all $x_1\in X_1^{0,-}$ we set
\begin{align*}
I^{(j)}(x_1):=\{ x_2\in \mathrm{supp}(\nu) \ | \ x_2\ge \min(\mathrm{supp}(\pi^{(j)}_{x_1}) )\}.
\end{align*}
Let us explicitly point out here that $\text{supp}(\pi^{(j)}_{x_1})\subseteq I^{(j)}(x_1)$, where the inclusion is typically strict.
By the definition of $\lambda^{(j)}$ in Algorithm \ref{alg:2} and Remark \ref{rem:explained} we note that in every step $j$ at least one of the following three cases occurs:
\begin{enumerate}[(i)]
\item $|X_1^{j,+}|-|X_1^{j+1,+}|=1$ or $|X_1^{j,-}|-|X_1^{j+1,-}|=1$.
\item $|I^{(j)}(x_1^+(j))|-|I^{(j+1)}(x_1^+(j))|\ge 1$.
\item $|I^{(j)}(x_1^-(j))|-|I^{(j+1)}(x_1^-(j))|\ge 1$.
\end{enumerate}
Combining this observation with the fact that again by the definition of $\lambda^{(j)}$ in Algorithm \ref{alg:2} and Remark \ref{rem:explained} we have $X_1^{j,+}\subseteq X_1^{0,+}$ and $X_1^{j,-}\subseteq X_1^{0,-}$ as well as $I^{(j)}(x_1)\subseteq I^{(0)}(x_1)$ for all $x_1\in X_1^{0,+}\cup X_1^{0,-}$ we conclude that the number of steps $N$ is bounded by 
\begin{align*}
|X_1^{0,+}|+|X_1^{0,-}|+\sum_{x_1\in X_1^{0,-}} |I^{(0)}(x_1)| +\sum_{x_1\in X_1^{0,+}} |I^{(0)}(x_1)|&\le |\mathrm{supp}(\mu)|+|\mathrm{supp}(\mu)||\mathrm{supp}(\nu)|\\
&=|\mathrm{supp}(\mu)|(1+|\mathrm{supp}(\nu)|).
\end{align*}
This concludes the proof.
\end{proof}

\begin{proof}[Proof of Proposition \ref{thm:dispersion} for finitely supported $\pi\in \Pi(\mu,\nu)$]
Given Lemmas \ref{lem:W1_1} and \ref{lem:W1_2} all that is left to show is that a slightly generalised version of \eqref{eq:aim} holds for $\pi^{(N)}$, namely
\begin{align*}
\int W_1(\pi^{(N)}_{x_1}, \pi_{x_1})\,\mu(dx_1)\le \int \left|\int (x_2-x_1)\,\pi_{x_1}(dx_2) +c\right| \mu(dx_1).
\end{align*}
Using the triangle inequality and Remark \ref{rem:rho2} we indeed have
\begin{align}\label{eq:triangle1}
\begin{split}
W_1^{nd}(\pi^{(N)}, \pi)&\le \int W_1(\pi^{(N)}_{x_1}, \pi_{x_1})\,\mu(dx_1)\\
&\le \sum_{j=1}^N \int W_1(\pi^{(j)}_{x_1}, \pi^{(j-1)}_{x_1})\, \mu(dx_1)\\
&\le  \sum_{j=1}^N \mu(x_1^-(j-1))\frac{\lambda^{(j-1)}}{\mu(x_1^-(j-1))(x_2^+(j-1)-x_2^-(j-1))}\\
&\qquad\cdot|x_2^+(j-1)-x_2^-(j-1)| \\
&+ \sum_{j=1}^N\mu(x_1^+(j-1))\frac{\lambda^{(j-1)}}{\mu(x_1^+(j-1))(x_2^+(j-1)-x_2^-(j-1))}\\
&\qquad\cdot|x_2^-(j-1)-x_2^+(j-1)| \\\
&= \sum_{j=1}^N 2\lambda^{(j-1)}.
\end{split}
\end{align}
On the other hand, by definition of $\lambda^{(j-1)}$,
\begin{align}\label{eq:triangle2}
\int \left|\int (x_2-x_1)\,\pi^{(j-1)}_{x_1}(dx_2)+c \right| \mu(dx_1)- \int \left|\int (x_2-x_1)\,\pi^{(j)}_{x_1}(dx_2)+c \right| \mu(dx_1)=2\lambda^{(j-1)}.
\end{align}
Combining \eqref{eq:triangle1} and \eqref{eq:triangle2}
\begin{align*}
W_1^{nd}(\pi^{(N)}, \pi)&\le \int W_1(\pi^{(N)}_{x_1}, \pi_{x_1})\,\mu(dx_1)\\
&\le \sum_{j=1}^N \int \left|\int (x_2-x_1)\,\pi^{(j-1)}_{x_1}(dx_2) +c\right| \mu(dx_1)- \int \left|\int (x_2-x_1)\,\pi^{(j)}_{x_1}(dx_2) +c\right| \mu(dx_1)\\ &\le \int \left|\int (x_2-x_1)\,\pi^{(0)}_{x_1}(dx_2)+c \right| \mu(dx_1)=\int \left|\int (x_2-x_1)\,\pi_{x_1}(dx_2) +c\right| \mu(dx_1),
\end{align*}
which shows the claim.\\
Lastly using Algorithm \ref{alg:2} in the special case $c=0$ the claim now follows for finitely supported measures, as $\mu\preceq_c \nu$ and $X_1^{N,-}=\emptyset$ implies $X_1^{N,+}=\emptyset$. Thus $\pi^{(N)}$ is a martingale.
\end{proof}

\subsection{Proof of Proposition \ref{thm:dispersion} for general $\pi\in \Pi(\mu,\nu)$}\label{sec:dispersion_general}
Throughout this section we fix two measures $\mu,\nu \in \mathcal{P}_p(\R)$ such that $\mu\preceq_c \nu$.
We now extend the results from Section \ref{sec:dispersion_finite} to a general coupling $\pi\in \Pi(\mu,\nu)$ satisfying the barycentre dispersion assumption \ref{def:dispersion}. 

\begin{proof}[Proof of Proposition \ref{thm:dispersion} for general $\pi\in \Pi(\mu,\nu)$]
By Lemma \ref{lemma:approx1} applied with $\kappa_n=1/n$ there exists a sequence of finitely supported measures $(\pi^n)_{n \in \N}$ with 
\begin{align*}
\lim_{n \to \infty} W_1^{nd} (\pi^n, \pi)=0
\end{align*}
and 
\begin{align}\label{eq:display}
\int_{\{x_1 \ge x\}} (x_2-x_1) \, \pi^n(dx_1, dx_2)\ge -1/n
\end{align} 
for all $x \in \R$ and for all $n\in \N$. We remark that \eqref{eq:display} follows from the barycentre dispersion assumption \ref{def:dispersion} for $\pi$. Let us denote the marginals of $\pi^n$ by $\mu^n$ and $\nu^n$. In particular $\pi^n$ satisfies Assumption \ref{def:dispersion2} with $c_n=1/n$. Applying Algorithm \ref{alg:2} and using the proof of Proposition \ref{thm:dispersion} for finitely supported measures we can find a sequence of measures $(\pi^n_{mr})_{n\in \N}$ such that for all $n \in \N$ we have $\pi_{mr}^n \in \Pi(\mu^n, \nu^n)$,
\begin{align}\label{eq:fhh}
\int W_1(\pi_{mr,x_1}^n, \pi^n_{x_1})\,\mu^n(dx_1)\le \int \left|\int (x_2-x_1)\,\pi^n_{x_1}(dx_2) +\frac{1}{n}\right|\mu^n(dx_1)
\end{align}
and 
\begin{align}\label{eq:lowerbound}
 \int (x_2-x_1)\, \pi^n_{mr, x_1}(dx_2) \ge -1/n
\end{align}
for all $x_1\in \mathrm{supp}(\mu^n)$. We now want to extend the (only $\mu^n$-a.s. defined) disintegrations $x_1\mapsto \pi_{x_1}^n$ and $x_1\mapsto \pi_{mr,x_1}^n$ to the real line. For this we recall the functions $f^{\kappa}$ defined in \eqref{eq:fkappa} in the proof of Lemma \ref{lemma:approx1}.(i) and set for $\kappa=1/n$
\begin{align}\label{eq:extension}
\pi_{x_1}^n := \pi_{f^{1/n}(x_1)}^n, \qquad \pi^n_{mr,x_1}=\pi_{mr,f^{1/n}(x_1)}^n.
\end{align}
Next we define $\bar{\pi}^n:= \mu\otimes\pi_{x_1}^n$ as well as $\bar{\pi}_{mr}^n:= \mu\otimes\pi_{mr,x_1}^n$ and note that 
\begin{align*}
W_1(\pi_{x_1}^n, \pi^n_{f^{1/n}(x_1)}) =0=W_1(\pi_{mr,x_1}^n, \pi^n_{mr, f^{1/n}(x_1)})
\end{align*}
by \eqref{eq:extension}. But as $\mu^n=f^{1/n}(x_1)_*\mu$ this immediately implies
\begin{align*}
W_1^{nd}(\pi^n, \bar{\pi}^n)&\le \int \left( |x_1-f^{1/n}(x_1)|+W_1(\pi_{x_1}^n, \pi^n_{f^{1/n}(x_1)})\right)\,\mu(dx_1)\\
&= \int |x_1-f^{1/n}(x_1)|\,\mu(dx_1)
\end{align*}
and thus $\lim_{n\to \infty} W_1^{nd}(\pi^n, \bar{\pi}^n)=0$. The same argument shows $\lim_{n\to \infty} W_1^{nd}(\pi^n_{mr}, \bar{\pi}_{mr}^n)=0$. In particular the marginals of $(\bar{\pi}^n)_{n\in \N}$ and $(\bar{\pi}^n_{mr})_{n\in \N}$ still converge to $\mu$ and $\nu$ respectively and we still have from \eqref{eq:fhh} and \eqref{eq:lowerbound} that
\begin{align}\label{eq:fhh1a}
\begin{split}
\int W_1(\bar{\pi}_{mr,x_1}^n, \bar{\pi}^n_{x_1})\,\mu(dx_1)&=\int W_1(\pi_{mr,x_1}^n, \pi^n_{x_1})\,\mu^n(dx_1)\\
&\le \int \left|\int (x_2-x_1)\,\pi^n_{x_1}(dx_2) +\frac{1}{n}\right|\mu^n(dx_1)
\end{split}
\end{align}
as well as  
\begin{align}\label{eq:lowerbound1}
 \int (x_2-f^{1/n}(x_1))\, \bar{\pi}^n_{mr, x_1}(dx_2) \ge -1/n.
\end{align}
We also note that $(\bar{\pi}^n_{mr})^1=\mu$ for all $n\in \N$ and thus 
\begin{align}\label{eq:marginal}
\left(\frac{1}{n} \sum_{i=1}^n \bar{\pi}^i_{mr}\right)_{x_1}=\frac{1}{n} \sum_{i=1}^n \bar{\pi}^i_{mr,x_1}
\end{align}
for all $n\in \N$.\\
The introduction of $\bar{\pi}^n_{mr}$ has a specific purpose: it enables us to apply precompactness results for Young measures, see e.g \cite[Theorem 3.15, p.18]{balder1995lectures}; in particular there exists a disintegration $x_1\mapsto \bar{\pi}_{mr,x_1}$  such that (after taking a subsequence without relabelling) the measures $$\left(\frac{1}{n} \sum_{i=1}^n \bar{\pi}_{mr, x_1}^i\right)_{n\in \N}$$ converge weakly to $\bar{\pi}_{mr,x_1}$ for $\mu$-a.e. $x_1\in \R$. Setting $\pi_{mr} \coloneq \mu \otimes \bar{\pi}_{mr,x_1}$ this implies in particular that $\bar{\pi}_{mr}^n$ converges weakly to $\pi_{mr}$ by \cite[Cor. 3.14]{balder1995lectures}, and thus $\pi_{mr}\in \Pi(\mu,\nu)$. Furthermore
\begin{align*}
\limsup_{n\to \infty} \int\left( \inf_{\gamma^2\in \Pi\left(\frac{1}{n}\sum_{i=1}^n \bar{\pi}^{i}_{mr,x_1}, \pi_{mr,x_1}\right)} \int (|x_2-y_2|\wedge 1)\, \gamma^2(dx_2,dy_2)\right) \mu(dx_1)=0
\end{align*}
by the dominated convergence theorem. As $\lim_{n\to \infty} W_1^{nd}(\pi^n_{mr}, \bar{\pi}_{mr}^n)=0$ it is easy to see that also the first moments of the marginals of $(\frac{1}{n}\sum_{i=1}^n \bar{\pi}_{mr}^i)_{n\in \N}$ converge. Now we conclude by \cite[Lemma 1.4]{backhoff2020all}  that in fact $\lim_{n\to \infty}W^{nd}_1(\frac{1}{n} \sum_{i=1}^n \bar{\pi}_{mr}^i,\pi_{mr})=0$.\\
We now show that $\pi_{mr}$ is actually a martingale measure, i.e. $\pi_{mr}\in \mathcal{M}(\mu,\nu)$. Using the triangle inequality for $(\cdot)^-$ and summing over $i=1,\dots,n$ in \eqref{eq:lowerbound1} we obtain 
\begin{align*}
\left( \int (x_2-x_1)\,\left(\frac{1}{n}\sum_{i=1}^n \bar{\pi}_{mr,x_1}^i\right)(dx_2)\right)^-
&\le \frac{1}{n}\sum_{i=1}^n \left( \int (x_2-x_1)\,\bar{\pi}_{mr,x_1}^i(dx_2)\right)^-\\
&\le \frac{1}{n}\sum_{i=1}^n \Bigg( \left( \int (x_2-f^{1/i}(x_1))\,\bar{\pi}_{mr,x_1}^i(dx_2)\right)^- \\
&\qquad+\int |f^{1/i}(x_1))-x_1|\,\bar{\pi}_{mr,x_1}^i(dx_2)\Bigg)\\
&\le \frac{1}{n}\sum_{i=1}^n\left( \frac{1}{i} + \left|f^{1/i}(x_1))-x_1\right|\right).
\end{align*}
Using \eqref{eq:marginal} we thus conclude that
\begin{align*}
\lim_{n\to \infty} \int \left(  \int (x_2-x_1)\, \left(\frac{1}{n}\sum_{i=1}^n\bar{\pi}^{i}_{mr}\right)_{ x_1}(dx_2) \right)^- \left( \frac{1}{n}\sum_{i=1}^n\bar{\pi}^i_{mr}\right)^{1}(dx_1)=0.
\end{align*}
An application of Lemma \ref{lem:aap2} then shows $\pi_{mr} \in \mathcal{M}(\mu,\nu)$.\\
Lastly we aim to show that 
\begin{align*}
W_1^{nd}\left(\pi_{mr},\pi\right)&\le \int \left| \int (x_2-x_1)\, \pi_{x_1}(dx_2)\right|\,\mu(dx_1).
\end{align*}
Using Lemma \ref{lemma:convexity} we have
\begin{align*}
W_1^{nd}\left(\frac{1}{n}\sum_{i=1}^n \bar{\pi}_{mr}^i,\frac{1}{n}\sum_{i=1}^n \bar{\pi}^i\right)&\le \frac{1}{n}\sum_{i=1}^n \int W_1(\bar{\pi}_{mr,x_1}^i, \bar{\pi}^i_{x_1})\,\mu(dx_1) \\
&\stackrel{\eqref{eq:fhh1a}}{\le} \frac{1}{n}\sum_{i=1}^n \int \left|\int (x_2-x_1)\,\pi^i_{x_1}(dx_2) +\frac{1}{i}\right|\mu^i(dx_1).
\end{align*}
Thus
\begin{align*}
W_1^{nd}\left(\pi_{mr},\pi\right)&\le \limsup_{n\to \infty} W_1^{nd}\left(\pi_{mr}, \frac{1}{n}\sum_{i=1}^n \bar{\pi}_{mr}^i \right)  + \limsup_{n\to \infty}W_1^{nd}\left(\frac{1}{n}\sum_{i=1}^n \bar{\pi}_{mr}^i,\frac{1}{n}\sum_{i=1}^n \bar{\pi}^i\right)\\ &\quad + \limsup_{n\to \infty}W_1^{nd}\left(\frac{1}{n}\sum_{i=1}^n \bar{\pi}^i, \pi\right) \\
&\le \limsup_{n\to \infty}  \frac{1}{n}\sum_{i=1}^n \int \left|\int (x_2-x_1)\,\pi^i_{x_1}(dx_2) +\frac{1}{i}\right|\mu^i(dx_1),
\end{align*}
where we have used
\begin{align*}
\limsup_{n\to \infty} W_1^{nd} \left(\frac{1}{n}\sum_{i=1}^n \bar{\pi}^i, \pi\right)&\le \limsup_{n\to \infty} \frac{1}{n}\sum_{i=1}^n \int W_1(\bar{\pi}_{x_1}^i, \pi_{x_1})\,\mu(dx_1)\\
&=\limsup_{n\to \infty} \frac{1}{n}\sum_{i=1}^n \int W_1(\pi_{f^{1/i}(x_1)}^i, \pi_{x_1})\,\mu(dx_1)=0,
\end{align*}
which holds again by the choice of $\mu^n=(f^{1/n}(x_1))_*\mu$ as in the proof of Lemma \ref{lemma:approx1}.(i). 
As $\lim_{n\to \infty} W_1^{nd}(\pi^n, \pi)=0$ we can apply Lemma \ref{lem:aap_1} to see that the last expression is equal to $$\int \left| \int (x_2-x_1)\, \pi_{x_1}(dx_2)\right|\,\mu(dx_1).$$ This concludes the proof.
\end{proof}

\section{Proof of Theorem \ref{thm:W1}}\label{sec:W1}

Throughout this section we assume $\mu \preceq_c \nu$ and $\mu,\nu\in \mathcal{P}_1(\R)$. Furthermore we make use of the notation introduced in Section \ref{sec:thm_dispersion} for the case $c=0$, i.e. we write
\begin{align*}
X_{1}^{n,+} &= \left\{ x_1\in \text{supp}(\mu) \ \bigg|\ \int (x_2-x_1)\,\pi^n_{ x_1}(dx_2)>0\right\},\\
X_1^{n,0} &=  \left\{ x_1\in \text{supp}(\mu) \ \bigg|\ \int (x_2-x_1)\,\pi^n_{x_1}(dx_2)=0\right\},\\
X_{1}^{n,-} &= \left\{ x_1\in \text{supp}(\mu) \ \bigg|\ \int (x_2-x_1)\,\pi^n_{x_1}(dx_2)<0\right\}
\end{align*}
for a sequence of measures $(\pi^n)_{n\in \N}$.
We also recall
\begin{align*}
X_{2}^{n,+}&= \bigcup_{x_{1}^+\in X_{1}^{n,+}}\mathrm{supp}(\pi^n_{x_{1}^+}), \quad X_{2}^{n,0}= \bigcup_{x_{1}^0\in X_{1}^{n,0}} \mathrm{supp}(\pi^n_{x_{1}^0})\quad \text{and}\\
 X_{2}^{n,-}&= \bigcup_{x_{1}^-\in X_{1}^{n,-}}\mathrm{supp}(\pi^n_{ x_1^-}).
\end{align*}

\subsection{Proof of Theorem \ref{thm:W1} for finitely supported $\pi\in \Pi(\mu,\nu)$ with common compact support}\label{sec:w1_finite}
We prove Theorem \ref{thm:W1} via several lemmas. We first argue for finitely supported $\pi\in \Pi(\mu, \nu)$ and write $\pi^{(0)}=\pi$. To motivate the construction in this section, let us first consider a particular case:
\begin{lemma}\label{lem:aux0}
Assume that $\pi=\pi^{(0)}\in \Pi(\mu, \nu)$ is finitely supported and that $X_1^{0,0}=\emptyset$. Then there exist pairs $(x_{1}^-, x_{1}^+)\in X_{1}^{0,-}\times X_{1}^{0,+}$ and $(x_{2}^-,x_{2}^+)\in \text{supp}(\pi^{(0)}_{x_{1}^-})\times \text{supp}(\pi^{(0)}_{x_{1}^+})$ such that $x_{2}^-<x_{2}^+.$
\end{lemma}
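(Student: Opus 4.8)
The plan is to argue by contradiction, using only the standing hypothesis $\mu \preceq_c \nu$ (which forces $\mu$ and $\nu$ to have the same barycentre) together with finiteness of the support. First I would note that $X_1^+$ and $X_1^-$ are both non-empty: since $X_1^0 = \emptyset$ we have $\text{supp}(\mu) = X_1^+ \cup X_1^-$, so if $X_1^- = \emptyset$ then $\int (x_2-x_1)\,\pi_{x_1}(dx_2) > 0$ on a set of full $\mu$-measure, whence $\int x_2\,\nu(dx_2) > \int x_1\,\mu(dx_1)$, contradicting equality of barycentres; the case $X_1^+ = \emptyset$ is symmetric. Consequently $\mu(X_1^+) > 0$, $\mu(X_1^-) > 0$, and (since $\pi$ is finitely supported) the sets $X_2^+$ and $X_2^-$ are finite and non-empty.

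Next, suppose the conclusion fails; then $x_2^- \ge x_2^+$ for every $x_1^\pm \in X_1^\pm$ and every $x_2^\pm \in \text{supp}(\pi_{x_1^\pm})$, i.e., with $a \coloneq \max X_2^+$ we have $X_2^+ \subseteq (-\infty, a]$ and $X_2^- \subseteq [a, \infty)$. Writing $\beta(x_1) \coloneq \int x_2\,\pi_{x_1}(dx_2)$ for the barycentre of the disintegration, I would then locate the first marginal relative to $a$: for $x_1 \in X_1^+$ one has $x_1 < \beta(x_1) \le \max\text{supp}(\pi_{x_1}) \le a$, and for $x_1 \in X_1^-$ one has $x_1 > \beta(x_1) \ge \min\text{supp}(\pi_{x_1}) \ge a$, so $X_1^+ \subseteq (-\infty, a)$ and $X_1^- \subseteq (a, \infty)$.

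Finally I would test the convex order against the convex function $\phi(x) = (a-x)^+$. Since $\text{supp}(\mu) = X_1^+ \cup X_1^-$ with $X_1^- \subseteq (a, \infty)$, we get $\int \phi\,d\mu = \int_{X_1^+}(a - x_1)\,\mu(dx_1)$; and since $\pi_{x_1}$ is supported in $(-\infty, a]$ for $x_1 \in X_1^+$ while $\phi \equiv 0$ on $[a, \infty) \supseteq \text{supp}(\pi_{x_1})$ for $x_1 \in X_1^-$, disintegrating $\nu = \int \pi_{x_1}\,\mu(dx_1)$ gives $\int \phi\,d\nu = \int_{X_1^+}(a - \beta(x_1))\,\mu(dx_1)$. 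As $\beta(x_1) > x_1$ on $X_1^+$ and $\mu(X_1^+) > 0$, this yields $\int \phi\,d\nu < \int \phi\,d\mu$, contradicting $\mu \preceq_c \nu$; hence a pair with $x_2^- < x_2^+$ must exist.

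The one point that needs a little care is the boundary configuration $\min X_2^- = \max X_2^+ = a$, in which the threshold value $a$ may simultaneously be an atom of some $\pi_{x_1^+}$ and of some $\pi_{x_1^-}$; this causes no trouble because $\phi(a) = 0$, so the $X_1^-$-contribution to $\int\phi\,d\nu$ still vanishes and the strict inequality $\beta(x_1) > x_1$ on the positive-mass set $X_1^+$ still closes the argument. Everything else is routine bookkeeping with the definitions of $X_1^\pm$, $X_2^\pm$ and convex order.
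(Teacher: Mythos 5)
Your proof is correct and follows essentially the same route as the paper: assume no such pair exists, deduce that $X_2^+$ lies entirely to the left of $X_2^-$ (hence $X_1^+$ and $X_1^-$ are separated by the threshold), and contradict $\mu\preceq_c\nu$ with a piecewise-linear convex test function — you use the put payoff $(a-x)^+$ at $a=\max X_2^+$ where the paper uses the mirror-image call payoff $(x-\min X_2^-)^+$. Your explicit check that $X_1^+$ and $X_1^-$ are non-empty is a small but welcome addition that the paper leaves implicit.
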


\begin{proof}
Let us assume towards a contradiction that the claim does not hold. We first note that for all $x_{1}^+\in X_{1}^{0,+}$ there is  $x_{2}^+\in \text{supp}(\pi^{(0)}_{x_{1}^+})$ such that $x_{2}^+> x_{1}^+$ and correspondingly for all $x_{1}^-\in X_{1}^{0,-}$ there is  $x_{2}^-\in \text{supp}(\pi^{(0)}_{x_{1}^-})$ such that $x_{2}^-< x_{1}^-$. This implies that 
\begin{align}\label{eq 1}
\max\{x_{1}^+ \ : \ x_{1}^+\in X_{1}^{0,+}\}&<\max\{x_{2}^+ \ : \ x_{2}^+\in X_{2}^{0,+}\} \le \min \{x_{2}^- \ : \ x_{2}^-\in X_{2}^{0,-}\}\\
&< \min \{x_{1}^- \ : \ x_{1}^-\in X_{1}^{0,-}\}.\nonumber
\end{align}
We conclude that 
$$\{x_1\in \mathrm{supp}(\mu)\ : \ x_1\le \max( X_{2}^{0,+})\}=X_1^{0,+},$$
$$\{x_2\in \mathrm{supp}(\nu)\ : \ x_2\le \max(X_{2}^{0,+})\}=X_2^{0,+}$$
and $X_2^{0,+}\cap X_2^{0,-}\subseteq \{\max(X_{2}^{0,+})\}$.
Furthermore, by definition of $X_1^{0,+}$ we have
\begin{align}\label{eq:idiot}
\int_{X_1^{0,+}\times \R} (x_2-x_1) \pi^{(0)}(dx_1,dx_2)= \int_{X_1^{0,+}} \int  (x_2-x_1) \pi^{(0)}_{x_1}(dx_2)\mu(dx_1)>0.
\end{align}
Also from \eqref{eq 1} and the defining property of disintegrations \eqref{eq:idiot2} we conclude for a generic Borel measurable function $f:\R\to \R$ that 
\begin{align}\label{eq:disint}
\begin{split}
\int_{X_2^{0,+}} f(x_2)\,\nu(dx_2)&=\int \int_{X_2^{0,+}} f(x_2)\, \pi^{(0)}(dx_1, dx_2)=\int \int_{X_2^{0,+}} f(x_2)\, \pi_{x_1}^{(0)}(dx_2)\,\mu(dx_1)\\
&=\int_{X_1^{0,+}} \int_{X_2^{0,+}} f(x_2)\,\pi_{x_1}^{(0)}(dx_2)\,\mu(dx_1)\\
&+\int_{(X_1^{0,+})^c} \int_{X_2^{0,+}} f(x_2)\, \pi_{x_1}^{(0)}(dx_2)\,\mu(dx_1)\\
&= \int_{X_1^{0,+}} \int f(x_2)\,\pi_{x_1}^{(0)}(dx_2)\,\mu(dx_1)\\
&+\int_{X_1^{0,-}} f(\max(X_{2}^{0,+}))\pi_{x_1}^{(0)}(\max(X_{2}^{0,+}))\,\mu(dx_1),
\end{split}
\end{align}
noting that $(X_1^{0,+})^c\cap \text{supp}(\mu)=X_1^{0,-}$ and recalling that $X_2^{0,+}\cap X_2^{0,-}\subseteq \{\max(X_{2}^{0,+})\}$.
Let us define $g(x)\coloneq(\max (X_{2}^{0,+})-x)^+$. Then by the above
\begin{align*}
\int g(x_1)\,\mu(dx_1)&=\int_{X_1^{0,+}} (\max(X_{2}^{0,+})-x_1)^+\,\mu(dx_1)\\
&=\int_{X_1^{0,+}} (\max(X_{2}^{0,+})-x_1)\,\mu(dx_1)\\
&\stackrel{\eqref{eq:idiot}}{>} \int_{X_1^{0,+}} \int (\max(X_{2}^{0,+})-x_2)\,\pi^{(0)}_{x_1}(dx_2)\mu(dx_1)\\
&\stackrel{\eqref{eq:disint}}{=} \int_{X_2^{0,+}} (\max(X_{2}^{0,+})-x_2)\,\nu(dx_2)\stackrel{\eqref{eq 1}}{=}\int g(x_2)\,\nu(dx_2).
\end{align*}
Noting that $g$ is convex, this contradicts $\mu \preceq_c \nu$ and shows the claim.
\end{proof}

We are now ready for the general case:
\begin{lemma}\label{lem:aux1}
Let $j\in \N_0=\N\cup \{0\}$, assume that $\pi^{(j)}\in \Pi(\mu, \nu)\setminus \mathcal{M}(\mu,\nu)$ is finitely supported and there exist no pairs $(x_{1}^-, x_{1}^+)\in X_{1}^{j,-}\times X_{1}^{j,+}$ and $\left(x_{2}^-,x_{2}^+\right)\in \text{supp}(\pi^{(j)}_{x_{1}^-})\times \text{supp}(\pi^{(j)}_{x_{1}^+})$ such that $x_{2}^-<x_{2}^+$. Set $x_2^+(j)\coloneq \max(X_2^{j,+})\le\min(X_2^{j,-})=:x_2^-(j)$. Then there exist vectors $$T_1^j\coloneq (x_1^+(j),x_1^{0,1}(j), \dots, x_1^{0,m_j}(j),x_1^{-}(j))$$  and $$T_2^j\coloneq \left(x_2^+(j),x_2^{0,1,-}(j),x_2^{0,1,+}(j), \dots, x_2^{0,m_j,+}(j), x_2^-(j)\right)$$ with $1\le m_j \le |\text{supp}(\mu)|$ such that the following holds: 
\begin{itemize}
\item $x_1^{\pm}(j)\in X_1^{j,\pm}$ and $x_2^{\pm}(j)\in \mathrm{supp}(\pi^{(j)}_{x_1^{\pm}(j)})$,
\item $x_1^{0,i}(j)\in X_1^{j, 0}$  for $i=1, \dots, m_j,$
\item  $x_2^{0,i,-}(j)=\min(\mathrm{supp}(\pi^{(j)}_{x_1^{0,i}(j)}))$ and $x_2^{0,i,+}(j)=\max(\mathrm{supp}(\pi^{(j)}_{x_1^{0,i}(j)}))$ for $i=1, \dots, m_j,$
\item the following holds:
\begin{align}\label{eq:oooorrdeeerr!}
x_2^{0,1,-}(j)<x_2^{+}(j)
\le x_2^{0,2,-}(j)<x_2^{0,1,+}(j)\le \cdots \le x_2^-(j)<x_2^{0,m_j,+}(j),
\end{align}
see Figure \ref{fig:2_def} for an illustration.
\end{itemize}
\end{lemma}

\begin{figure}[h!]
\begin{tikzpicture}[line cap=round,line join=round,x=0.9cm,y=0.9cm]
\draw[-,color=black] (-0.5,0) -- (12.5,0) node[right]{$\mu$}; 
\draw[-,color=black] (-0.5,3) -- (12.5,3)  node[right]{$\nu$}; 
\draw[->, line width=2pt, color=darkspringgreen] (3,0)  -- (1,3) node[above] {\footnotesize $x_2^{0,1,-}$};
\draw[->,line width=2pt, color=azure] (0,0) node[below] {\footnotesize $x_1^+$} -- (2,3)node[above] {\footnotesize $x_2^{+}$};
\draw[->,line width=2pt, color=darkspringgreen] (3,0) node[below] {\footnotesize $x_1^{0,1}$} -- (5,3) node[above] {\footnotesize $x_2^{0,1,+}$};
\draw[->,line width=2pt, color=darkspringgreen] (6,0) -- (4,3)node[above] {\footnotesize $x_2^{0,2,-}$};
\draw[->,line width=2pt, color=darkspringgreen] (6,0) node[below] {\footnotesize $x_1^{0,2}$}-- (8,3) node[above]  {\footnotesize $x_2^{0,2,+}$};
\draw[->,line width=2pt, color=darkspringgreen] (9,0) node[below] {\footnotesize $x_1^{0,3}$} -- (11,3) node[above]  {\footnotesize $x_2^{0,3,+}$};
\draw[->,line width=2pt, color=darkspringgreen] (9,0) -- (7,3) node[above]  {\footnotesize $x_2^{0,3,-}$};
\draw[->,line width=2pt, color=red] (12,0) node[below] {\footnotesize $x_1^{-}$}-- (10,3) node[above]  {\footnotesize $x_2^{-}$};
\end{tikzpicture}
\caption{$T_1^j$ and $T_2^j$ for $m_j=3$.}
\label{fig:2_def}
\end{figure}

\begin{proof}
Note that $x_2^+(j)\le x_2^-(j)$ follows from the assumption. We now prove the claim inductively. Thus we assume towards a contradiction, that there exists no $x_1^0\in X_1^{j,0}$ and $(x_2^{0,-}, x_2^{0,+})$ with $x_2^{0,-}, x_2^{0,+}\in \mathrm{supp}(\pi^{(j)}_{x_1^0})$ and $x_2^{0,-}<x_2^{+}(j)<x_2^{0,+}$. In other words, for each $x_1^0\in X_1^{j,0}$ we have
\begin{align}\label{eq 2}
\mathrm{supp}(\pi^{(j)}_{x_1^0} ) \subseteq (-\infty, x_2^+(j)] \quad  \text{or} \quad \mathrm{supp}(\pi^{(j)}_{x_1^0} ) \subseteq [x_2^+(j),\infty).
\end{align}
Furthermore, recall that $x_1^+(j)<x_2^+(j)$ and \eqref{eq 1}, which ensures $(-\infty, x_2^+(j)]\cap X_1^-=\emptyset$ and thus implies $$\int_{(-\infty, x_2^{+}(j)]}\int  (x_2-x_1)\,\pi^{(j)}(dx_1,dx_2)>0.$$
Set $g(x)\coloneq(x_2^+(j)-x)^+$. Then, as in the proof of Lemma \ref{lem:aux0},
\begin{align*}
\int g(x_1)\,\mu(dx_1)&=\int_{(-\infty, x_2^{+}(j)]}(x_2^+(j)-x_1)^+\,\mu(dx_1)\\
&=\int_{(-\infty, x_2^{+}(j)]}(x_2^+(j)-x_1)\,\mu(dx_1)\\
&> \int_{(-\infty, x_2^{+}(j)]} \int (x_2^+(j)-x_2)\,\pi^{(j)}_{x_1}(dx_2)\mu(dx_1)\\
&\stackrel{\eqref{eq 2}, \eqref{eq 1}}{=} \int_{(-\infty, x_2^{+}(j)]}(x_2^+(j)-x_2)\,\nu(dx_2)\stackrel{\eqref{eq 1}}{=}\int g(x_2)\,\nu(dx_2).
\end{align*}
The above contradicts $\mu\preceq_c \nu$ and thus shows existence of $x_1^{0,1}(j)$. Let us choose $x_2^{0,1,-}(j)\coloneq\min(\mathrm{supp}(\pi^{(j)}_{x_1^{0,1}(j)}))$ and $x_2^{0,1,+}(j)\coloneq \max(\mathrm{supp}(\pi^{(j)}_{x_1^{0,1}(j)}))$.  Now we iterate the argument until $x_2^{0,k,+}(j)>x_2^-(j)$ for some $k\in \N$ (note that
 $\text{supp}(\nu)$ is finite). This shows existence of vectors $x_1^{0,1}(j), \dots, x_1^{0,k}(j)$ such that 
\begin{align} \label{eq:property}
x_2^{0,i,-}(j)<x_2^{0,i-1,+}(j)<x_2^{0,i,+}(j).
\end{align} 
Note that if $x_2^{0,i,-}(j)<x_2^{0,i-2,+}(j)$ for some $i=2,\dots, k$ (where we set $x_2^{0,0,+}(j)\coloneq x_2^+(j)$), then $x_1^{0,i-1}(j)$ can be deleted from $(x_1^{0,1}(j), \dots, x_1^{0,k}(j))$ without changing \eqref{eq:property}. In conclusion we can assume that $x_2^{0,i,-}(j)\ge x_2^{0,i-2,+}(j)$ for all $i=2, \dots, m_j$. This shows \eqref{eq:oooorrdeeerr!} and concludes the proof.
\end{proof}

\begin{definition}\label{def:exchange}
We call the tuples $T_1^j$ and $T_2^j$ constructed in Lemma \ref{lem:aux1} \textnormal{exchange tuples for $\pi^{(j)}$}, if 
$$|T_1^j|=\min \left\{|T_1^j|\ | \ \exists~ T_2^j \mathrm{\ such\ that\ }(T_1^j,T_2^j) \mathrm{\ are\ as\ in\ Lemma\ }\ref{lem:aux1}\right\}.$$

\end{definition}
For notational convenience we make the following additional conventions for the rest of this section: we set $x_1^{0,0}(j)\coloneq x_1^+(j)$ and $x_1^{0,m_j+1}(j)\coloneq x_1^-(j)$ as well as $x_2^{0,0,+}(j)\coloneq x_2^+(j)$ and $x_2^{0,m_j+1,-}(j)\coloneq x_2^-(j)$ for all $j=1, \dots, N$. This is particularly useful in counting arguments, where we do not need to stress the special role of $x_1^+(j)$ or $x_1^-(j)$ respectively.\\


Given Lemmas \ref{lem:aux0} and \ref{lem:aux1}, we now apply the following algorithm, which should be compared to Algorithm \ref{alg:2}:
\begin{algorithm}\label{alg:1}
Set $j=0$.
\begin{enumerate}[(i)]
\item If there exists some 4-tuple $(x_1^-,x_1^+,x_2^-,x_2^+)$ with $(x_{1}^-, x_{1}^+)\in X_{1}^{j,-}\times X_{1}^{j,+}$, $x_2^-<x_2^+$ and $\left(x_{2}^-,x_{2}^+\right)\in \text{supp}(\pi^{(j)}_{x_{1}^-})\times \text{supp}(\pi^{(j)}_{x_{1}^+})$, then set $$(x_1^-(j), x_1^+(j), x_2^-(j), x_2^+(j))\coloneq \left(x_1^-,x_1^+,\min(\mathrm{supp}(\pi^{(j)}_{x_1^-})),\max(\mathrm{supp}(\pi^{(j)}_{x_1^+}))\right)$$ and carry out steps \textit{(ii)}-\textit{(iii)} of Algorithm \ref{alg:2}. Set $m_j=0$.
\item If there exist no 4-tuples $(x_1^-,x_1^+,x_2^-,x_2^+)$ with $(x_{1}^-, x_{1}^+)\in X_{1}^{j,-}\times X_{1}^{j,+}$,  $x_2^-<x_2^+$ and $\left(x_{2}^-,x_{2}^+\right)\in \text{supp}(\pi^{(j)}_{x_{1}^-})\times \text{supp}(\pi^{(j)}_{x_{1}^+})$, then use Lemma \ref{lem:aux1} to choose exchange tuples for $\pi^{(j)}$ denoted by  $T_1^j$ and $T_2^j$. Set
\begin{align*}
\lambda^{(j,+)}&\coloneq  \mu(x_1^{+}(j))\min\left\{\tilde{\lambda}>0\ \bigg| \ \int (x_2-x_1^{+}(j))\,\pi^{(j)}_{x_1^{+}(j)}(dx_2)+\tilde{\lambda}(x_2^{0,1,-}(j)-x_2^{+}(j))\le 0\right\}
\end{align*}
and
\begin{align*}
\lambda^{(j,-)}&\coloneq \mu(x_1^{-}(j))\min\left\{\tilde{\lambda}>0\ \bigg| \ \int (x_2-x_1^{-}(j))\,\pi^{(j)}_{x_1^{-}(j)}(dx_2)+\tilde{\lambda}(x_2^{0,m_j,+}(j)-x_2^{-}(j))\ge 0\right\}
\end{align*}
Furthermore for $i=1, \dots, m_j+1$ set
\begin{align*}
\lambda^{(j,i)}&\coloneq \pi^{(j)}(x_1^{0,i}(j),x_2^{0,i,-}(j))\wedge\pi^{(j)}(x_1^{0,i-1}(j),x_2^{0,i-1,+}(j)).
\end{align*}
and 
\begin{align*}
\lambda^{(j)}&=\lambda^{(j,+)} (x_2^{+}(j)-x_2^{0,1,-}(j)) \wedge \min_{i=1,\dots, m_j+1} \lambda^{(j,i)} (x_2^{0,i-1,+}(j)-x_2^{0,i,-}(j))\\
&\wedge \lambda^{(j,-)}(x_2^{0,m_j,+}(j)-x_2^{-}(j))>0.
\end{align*}
Now define $\rho^{(j)}\in \mathcal{P}(\R^2\times \R^2)$  via $$\rho^{(j)}(dx_1, dx_2, dy_1, dy_2)\coloneq (\delta_{x_1}(dy_1)\otimes  \mu(dx_1)) \otimes \rho^{(j)}_{(x_1, y_1)}(dx_2,dy_2),$$ where 
\begin{align*}
\rho^{(j)}_{(x_1,x_1)}&\coloneq(x_2, x_2)_\ast \pi^{(j)}_{x_1}\quad \text{ for all }x_1\in \mathrm{supp}(\mu)\setminus T_1^j\\
\rho^{(j)}_{(x_1^{+}(j), x_1^{+}(j))}&\coloneq (x_2, x_2)_\ast \pi^{(j)}_{x_1^{+}(j)}+\frac{\lambda^{(j)}}{\mu(x_1^{+}(j))(x_2^{+}(j)-x_2^{0,1,-}(j))}\\
&\qquad\cdot (\delta_{(x_2^{+}(j),x_2^{0,1,-}(j))}-\delta_{(x_2^{+}(j),x_2^{+}(j))}),\\
\rho^{(j)}_{(x_1^{0,i}(j),x_1^{0,i}(j))}&\coloneq (x_2, x_2)_\ast \pi^{(j)}_{x_1^{0,i}(j)}+\frac{\lambda^{(j)}}{\mu(x_1^{0,i}(j))(x_2^{0,i-1,+}(j)-x_2^{0,i,-}(j))}\\
&\qquad\cdot (\delta_{(x_2^{0,i,-}(j),x_2^{0,i-1,+}(j))}-\delta_{(x_2^{0,i,-}(j),x_2^{0,i,-}(j))})\\
&\qquad+\frac{\lambda^{(j)}}{\mu(x_1^{0,i}(j))(x_2^{0,i,+}(j)-x_2^{0,i+1,-}(j))}\\
&\qquad\cdot (\delta_{(x_2^{0,i,+}(j),x_2^{0,i+1,-}(j))}-\delta_{(x_2^{0,i,+}(j),x_2^{0,i,+}(j))}) \qquad \text{for } i=1,\dots, m_j\\
\rho^{(j)}_{(x_1^{-}(j),x_1^{-}(j))}&\coloneq (x_2, x_2)_\ast \pi^{(j)}_{x_1^{-}(j)}+\frac{\lambda^{(j)}}{\mu(x_1^{-}(j))(x_2^{0,m_j,+}(j)-x_2^{-}(j))}\\
&\qquad\cdot (\delta_{(x_2^{-}(j),x_2^{0,m_j,+}(j))}-\delta_{(x_2^{-}(j),x_2^{-}(j))}).
\end{align*}
Set $\pi^{(j+1)}(dy_1, dy_2)\coloneq \int \rho^{(j)}(dx_1, dx_2, dy_1, dy_2)$.
\end{enumerate}
Set $j=j+1$. Now iterate \textit{(i)-(ii)} and terminate if $\pi^{(j)}\in \mathcal{M}(\mu,\nu)$ for some $j\in \N$.  In that case, set $\pi_{mr}\coloneq \pi^{(j)}$.
\end{algorithm}

\begin{remark}\label{rem:explained2}
We now make the same remarks as for Algorithm \ref{alg:2} to emphasize the similarities. Indeed note that the above algorithm formalises the intuition of switching barycentre mass $\lambda^{(j)}$ between the points $(x^{+}_1(j), x_2^+(j))$  and $(x^{-}_1(j), x_2^-(j))$ through the intermediate points $$\left(x_1^{0,1}(j), x_2^{0,1,-}(j)\right), \dots, \left(x_1^{0, m_j}(j), x_2^{0, m_j,+}(j)\right).$$ In the definition of $\lambda^{(j)}$ we thus have to take care of the following constraints:
\begin{itemize}
\item After switching masses $x^{+}_1(j)\in X_1^{j+1,+}\cup X_1^{j+1,0}$ should hold, which explains the choice of $\lambda^{(j,+)}$.
\item After switching masses $x^{-}_1(j)\in X_1^{j+1,-}\cup X_1^{j+1,0}$ should hold, which explains the choice of $\lambda^{(j,-)}$.
\item For $i=1, \dots, m_j+1$, we cannot switch more probability mass than available at each of the points $(x_1^{0,i}(j), x_2^{0,i,-}(j))$ and $(x_1^{0,i-1}(j), x_2^{0,i-1,+}(j))$, which is exactly equal to $\pi^{(j)}(x_1^{0,i}(j),x_2^{0,i,-}(j))$ and $\pi^{(j)}(x_1^{0,i-1}(j),x_2^{0,i-1,+}(j))$ respectively. This explains the choice of $\lambda^{(j,i)}$.
\end{itemize}
Again, one of these constraints will be strict, i.e. after carrying out step $j$ we will either have deleted one left-most or right-most point of the support of  $\pi^{(j)}_{x_1^{0,i}(j)}$, or $x^{+}_1(j)\in X_1^{j+1,0}$ or  $x^{-}_1(j)\in X_1^{j+1,0}$ (we might have added new points to the corresponding supports, which are strictly larger or smaller than the ones we deleted).
\end{remark}

\begin{definition}
We denote the number of steps until termination of Algorithm \ref{alg:1} by $N\in \N\cup \{\infty\}$.
\end{definition}

\begin{remark}\label{rem:rho}
As in Section \ref{sec:thm_dispersion} we will mostly work with the definition of $\pi^{(j)}$ directly in order to shorten notation. Nevertheless, to make arguments in the proof of Lemma \ref{lem:aux4} and Section \ref{sec:non_compact} precise, we will sometimes use $\rho^{(j)}$ directly. In particular we will again use that for $j=0, \dots, N-1$ we have
\begin{align*}
W_1^{nd}(\pi^{(j)}, \pi^{(j+1)})\le \int \int |x_2-y_2|\,\rho_{(x_1,x_1)}^{(j)}(dx_2, dy_2)\,\mu(dx_1).
\end{align*}
\end{remark}

We are now ready to describe the evolution of the law of the canonical process $(x_1, x_2)$ throughout the steps of the algorithm. To formalise this, we write $z^j=(x_1^j, x_2^j)\in \R^2$ and define the disintegrations $\rho^{(j)}_{z^j}(dz^{j+1})$ with respect to the first two coordinates correspondingly.

\begin{definition}\label{definition:rho}
If $N<\infty$ then we define the joint law of $\rho^{(j)}$ over all steps $j=0, \dots, N-1$ as
\begin{align*}
\rho(dz^0,dz^1, dz^2,\dots,dz^N)\coloneq \rho_{z^{N-1}}^{(N-1)}(dz^N)\dots \rho^{(1)}_{z^1}(dz^2)\rho^{(0)}(dz^0,dz^1),
\end{align*}
where $z^0, \dots, z^{N}\in\R^2$.
Furthermore, for an index set $I\subseteq\{0,1, \dots, N\}$ we define $\rho^{I}$ as the push-forward measure of the projection $(z^0, \dots, z^N)\mapsto (z^i)_{i\in I}$ under $\rho$. 
\end{definition}

\begin{figure}[h!]
\begin{tikzpicture}[line cap=round,line join=round,x=0.9cm,y=0.9cm]
\draw[-,color=black] (-0.5,0) -- (12.5,0) node[right]{$\mu$}; 
\draw[-,color=black] (-0.5,3) -- (12.5,3)  node[right]{$\nu$}; 
\draw[->, line width=2pt, color=darkspringgreen] (3,0)  -- (1,3) node[above] {\footnotesize $x_2^{0,1,-}$};
\draw[->,line width=2pt, color=azure] (0,0) node[below] {\footnotesize $x_1^+$} -- (2,3)node[above] {\footnotesize $x_2^{+}$};
\draw[->,line width=2pt, color=darkspringgreen] (3,0) node[below] {\footnotesize $x_1^{0,1}$} -- (5,3) node[above] {\footnotesize $x_2^{0,1,+}$};
\draw[->,line width=2pt, color=darkspringgreen] (6,0) -- (4,3)node[above] {\footnotesize $x_2^{0,2,-}$};
\draw[->,line width=2pt, color=darkspringgreen] (6,0) node[below] {\footnotesize $x_1^{0,2}$}-- (8,3) node[above]  {\footnotesize $x_2^{0,2,+}$};
\draw[->,line width=2pt, color=darkspringgreen] (9,0) node[below] {\footnotesize $x_1^{0,3}$} -- (11,3) node[above]  {\footnotesize $x_2^{0,3,+}$};
\draw[->,line width=2pt, color=darkspringgreen] (9,0) -- (7,3) node[above]  {\footnotesize $x_2^{0,3,-}$};
\draw[->,line width=2pt, color=red] (12,0) node[below] {\footnotesize $x_1^{-}$}-- (10,3) node[above]  {\footnotesize $x_2^{-}$};
\end{tikzpicture}
\begin{tikzpicture}[line cap=round,line join=round,x=0.9cm,y=0.9cm]
\draw[-,color=black] (-0.5,0) -- (12.5,0) node[right]{$\mu$}; 
\draw[-,color=black] (-0.5,3) -- (12.5,3)  node[right]{$\nu$}; 
\draw[->,dashed, line width=2pt, color=darkspringgreen] (3,0)  -- (1,3) node[above] {\footnotesize $x_2^{0,1,-}$};
\draw[->, line width=2pt, color=darkspringgreen] (3,0)  -- (2,3);
\draw[->,dashed,line width=2pt, color=azure] (0,0) node[below] {\footnotesize $x_1^+$} -- (2,3)node[above] {\footnotesize $x_2^{+}$};
\draw[->,line width=2pt, color=azure] (0,0)  -- (1,3);
\draw[->,dashed, line width=2pt, color=darkspringgreen] (3,0) node[below] {\footnotesize $x_1^{0,1}$} -- (5,3) node[above] {\footnotesize $x_2^{0,1,+}$};
\draw[->, line width=2pt, color=darkspringgreen] (3,0)  -- (4,3);
\draw[->,dashed, line width=2pt, color=darkspringgreen] (6,0) -- (4,3)node[above] {\footnotesize $x_2^{0,2,-}$};
\draw[->, line width=2pt, color=darkspringgreen] (6,0) -- (5,3);
\draw[->,dashed, line width=2pt, color=darkspringgreen] (6,0) node[below] {\footnotesize $x_1^{0,2}$}-- (8,3) node[above]  {\footnotesize $x_2^{0,2,+}$};
\draw[->, line width=2pt, color=darkspringgreen] (6,0) -- (7,3);
\draw[->,dashed, line width=2pt, color=darkspringgreen] (9,0) node[below] {\footnotesize $x_1^{0,3}$} -- (11,3) node[above]  {\footnotesize $x_2^{0,3,+}$};
\draw[->, line width=2pt, color=darkspringgreen] (9,0) -- (10,3);
\draw[->,dashed, line width=2pt, color=darkspringgreen] (9,0) -- (7,3) node[above]  {\footnotesize $x_2^{0,3,-}$};
\draw[->, line width=2pt, color=darkspringgreen] (9,0) -- (8,3);
\draw[->,dashed,line width=2pt, color=red] (12,0) node[below] {\footnotesize $x_1^{-}$}-- (10,3) node[above]  {\footnotesize $x_2^{-}$};
\draw[->, line width=2pt, color=red] (12,0) -- (11,3);
\end{tikzpicture}
\caption{Exchange of masses at $x_2^{0,1,-}<x_2^+ \le x_2^{0,2,-} < x_2^{0,1,+} \le x_2^{0,3,-}<x_2^{0,2,+} \le x_2^- < x_2^{0,3,+}$ for the case $m_j=3$.}
\label{fig:2a}
\end{figure}

Applying Algorithm \ref{alg:1} recursively, we ``rectify" the barycentres of both $\pi^{(j)}_{x_1^+}$ and $\pi^{(j)}_{x_1^-}$, i.e. we shift both of them by the same amount to the left and right respectively. If we are in case \textit{(ii)}, the barycentres of the disintegrations at points $(x_1^{0,1}(j), \dots, x_1^{0,m_j}(j))$ still remain zero (see Figure \ref{fig:2a}). We formally prove this in the following lemmas, which list important properties of Algorithm \ref{alg:1}. 
\begin{lemma}\label{lem:aux}
The following properties hold for Algorithm \ref{alg:1}:
\begin{enumerate}[(i)]
\item If $x_1 \in X_1^{\tilde{j},0}$, then $x_1\in X_{1}^{j,0}$ for all $j \ge \tilde{j}$.
\item For any $0\le j \le N$ we have $\pi^{(j)}\in \Pi(\mu,\nu)$.
\item For any $0\le j \le N-1$ we have
\begin{align*}
\int \left|\int (x_2-x_1)\,\pi^{(j)}_{x_1}(dx_2) \right|\,\mu(dx_1)-\int \left|\int (x_2-x_1)\,\pi^{(j+1)}_{x_1}(dx_2) \right|\,\mu(dx_1)=2\lambda^{(j)}.
\end{align*}
and 
\begin{align*}
W_1^{nd}(\pi^{(j)}, \pi^{(j+1)})\le\int W_1(\pi^{(j)}_{x_1}, \pi^{(j+1)}_{x_1})\,\mu(dx_1)\le 2(m_j+1)\lambda^{(j)}.
\end{align*}
\end{enumerate}
\end{lemma}

\begin{proof}
Recalling the observations in Section \ref{sec:dispersion_finite} (especially Lemma \ref{lem:W1_1} and the proof of Prop. \ref{thm:W1} for finitely supported $\pi$ in the case $c=0$), we only have to prove the claims for steps $j$, in which case (ii) in Algorithm \ref{alg:1} is applied.\\
(i) For any element $x_1^{0,i}(j)\in T_1^j\cap X_1^{j,0}$, $i=1, \dots, m_j$ we have by Algorithm \ref{alg:1} 
\begin{align*}
\int (x_2 -x_1^{0,i}(j))\,\pi^{(j+1)}_{x_1^{0,i}}(j)(dx_2)&=\int (x_2 -x_1^{0,i}(j))\,\pi^{(j)}_{x_1^{0,i}}(j)(dx_2)\\
&+\frac{\lambda^{(j)}}{\mu(x_1^{0,i}(j))(x_2^{0,i-1,+}(j)-x_2^{0,i,-}(j))}(x_2^{0,i-1,+}(j)-x_2^{0,i,-}(j))\\
&+\frac{\lambda^{(j)}}{\mu(x_1^{0,i}(j))(x_2^{0,i,+}(j)-x_2^{0,i+1,-}(j))}(x_2^{0,i+1,-}(j)-x_2^{0,i,+}(j))\\
&=\int (x_2 -x_1^{(0,i)}(j))\,\pi^{(j)}_{x_1^{(0,i)}}(j)(dx_2)\\
&\quad +\frac{\lambda^{(j)}}{\mu(x_1^{0,i}(j))}-\frac{\lambda^{(j)}}{\mu(x_1^{0,i}(j))}=0.
\end{align*}
(ii) We note that it is sufficient to check $\pi^{(j+1)}(\R\times \{x_2\})=\pi^{(j)}(\R\times \{x_2\})$ for all $x_2\in T_2^j$. To see this let us consider $x_2^{0,i,+}(j)\in T_2^j$ and calculate for $j=1, \dots, m_j$
\begin{align*}
\pi^{(j+1)}(\R\times \{x_2^{0,i,+}(j)\})&=\pi^{(j)}(\R\times \{x_2^{0,i,+}(j)\})\\
&+\mu(x_1^{0,i+1}(j))\frac{\lambda^{(j)}}{\mu(x_1^{0,i+1}(j))(x_2^{0,i,+}(j)-x_2^{0,i+1,-})} \\
&-\mu(x_1^{0,i}(j))\frac{\lambda^{(j)}}{\mu(x_1^{0,i}(j))(x_2^{0,i,+}(j)-x_2^{0,i+1,-})} \\
&=\pi^{(j)}(\R\times \{x_2^{0,i,+}(j)\}).
\end{align*}
The cases $x_2^+, x_2^-, x_2^{0,i,-}\in T_2^j$ work analogously.\\
(iii) The first claim follows from (i) and the observation that 
\begin{align*}
&\int (x_2-x_1^+(j))\,\pi_{x_1^+(j)}^{(j)}(dx_2) -\int (x_2-x_1^+(j))\,\pi_{x_1^+(j)}^{(j+1)}(dx_2)\\
&=  \frac{\lambda^{(j)}}{\mu(x_1^{+}(j))(x_2^{+}(j)-x_2^{0,1,-}(j))}(x_2^{+}(j)-x_2^{0,1,-}(j))=\frac{\lambda^{(j)}}{\mu(x_1^+)}
\end{align*}
and 
\begin{align*}
&\int (x_2-x_1^-(j))\,\pi_{x_1^-(j)}^{(j+1)}(dx_2) -\int (x_2-x_1^-(j))\,\pi_{x_1^-(j)}^{(j)}(dx_2)\\
&=  \frac{\lambda^{(j)}}{\mu(x_1^{-}(j))(x_2^{0,m_j,+}(j)-x_2^{-}(j))}(x_2^{0,m_j,+}(j)-x_2^{-}(j))=\frac{\lambda^{(j)}}{\mu(x_1^-)}.
\end{align*}
We now show the second claim. Similarly to (i) we conclude that for all $x_1^{0,i}(j)\in T_1^j$
\begin{align*}
W_1\left(\pi^{(j)}_{x_1^{0,i}(j)}, \pi_{x_1^{0,i}(j)}^{(j+1)}\right)\le 2 \frac{\lambda^{(j)}}{\mu(x_1^{0,i}(j))}.
\end{align*}
Furthermore for $x_1^+(j)\in T_1^j$
\begin{align*}
W_1\left(\pi^{(j)}_{x_1^+(j)}, \pi_{x_1^+(j)}^{(j+1)}\right)\le  \frac{\lambda^{(j)}}{\mu(x_1^{+}(j))(x_2^{+}(j)-x_2^{0,1,-}(j))}|x_2^{0,1,-}(j)-x_2^{+}(j)|=\frac{\lambda^{(j)}}{\mu(x_1^+(j))}
\end{align*}
and similarly for $x_1^-(j)\in T_1^j$
\begin{align*}
W_1\left(\pi^{(j)}_{x_1^-(j)}, \pi_{x_1^-(j)}^{(j+1)}\right)\le \frac{\lambda^{(j)}}{\mu(x_1^{-}(j))(x_2^{0,m_j,+}(j)-x_2^{-}(j))}|x_2^{0,m_j,+}(j)-x_2^{-}(j)|=\frac{\lambda^{(j)}}{\mu(x_1^-(j))}.
\end{align*}
Writing 
\begin{align*}
W_1^{nd}(\pi^{(j)}, \pi^{(j+1)})&\le \int W_1(\pi^{(j)}_{x_1}, \pi^{(j+1)}_{x_1})\, \mu(dx_1)\\
& \le \mu(x_1^+(j))~ W_1(\pi^{(j)}_{x_1^+(j)}, \pi_{x_1^+(j)}^{(j+1)})+ \sum_{i=1}^{m_j} \mu(x_1^{0,i}(j))~W_1(\pi^{(j)}_{x_1^{0,i}(j)}, \pi^{(j+1)}_{x_1^{0,i}(j)})\\
&+ \mu(x_1^-(j))~ W_1(\pi^{(j)}_{x_1^-(j)}, \pi_{x_1^-(j)}^{(j+1)}) \le 2(m_j+1)\lambda^{(j)}
\end{align*}
concludes the proof.
\end{proof}

\begin{lemma}\label{lem:aux3}
Let us take a finitely supported measure $\pi\in \Pi(\mu,\nu)$, set $\pi^{(0)}=\pi$ and assume that  there exist no pairs $(x_{1}^-, x_{1}^+)\in X_{1}^{0,-}\times X_{1}^{0,+}$ and $\left(x_{2}^-,x_{2}^+\right)\in \text{supp}(\pi^{(0)}_{x_{1}^-})\times \text{supp}(\pi^{(0)}_{x_{1}^+})$ such that $x_{2}^-<x_{2}^+$. If we apply Algorithm \ref{alg:1} to $\pi^{(0)}=\pi$, then the following hold:
\begin{enumerate}[(i)]
\item For any steps $j\ge \tilde{j}$ with $j,\tilde{j}\in \{0, \dots, N\}$ and any $x_1\in X_1^{\tilde{j},0}$ we have $$\min(\mathrm{supp}(\pi_{x_1}^{(\tilde{j})}))\le \min(\mathrm{supp}(\pi_{x_1}^{(j)}))\le  \max(\mathrm{supp}(\pi_{x_1}^{(j)})) \le \max(\mathrm{supp}(\pi_{x_1}^{(\tilde{j})})).$$
\item For any steps $j\ge \tilde{j}$ with $j,\tilde{j}\in \{0, \dots, N\}$ and any $x_1\in X_1^{\tilde{j},+}$ we have
\begin{align*}
\max(\mathrm{supp}(\pi^{(j)}_{x_1}))\le \max(\mathrm{supp}(\pi^{(\tilde{j})}_{x_1})).
\end{align*}
Similarly for any $x_1\in X_1^{\tilde{j},-}$ we have
\begin{align*}
\min(\mathrm{supp}(\pi^{(j)}_{x_1}))\ge \min(\mathrm{supp}(\pi^{(\tilde{j})}_{x_1})).
\end{align*}
In particular we have
\begin{align}\label{eq:oha3}
  \max(X_2^{j, +})\le \max(X_2^{\tilde{j}, +})\le \min(X_2^{\tilde{j},-})\le  \min(X_2^{j, -}).
\end{align}
Thus, in every step $0\le j  \le N$ there are no pairs $(x_{1}^-, x_{1}^+)\in X_{1}^{j,-}\times X_{1}^{j,+}$ and $\left(x_{2}^-,x_{2}^+\right)\in \mathrm{supp}(\pi_{x_{1}^-}^{(j)})\times \mathrm{supp}(\pi_{x_{1}^+}^{(j)})$ such that $x_{2}^-<x_{2}^+$. 
\item For any $j > \tilde{j}$ and $x_1^0 \in X_1^{j,0}\setminus X_1^{\tilde{j},0}$ we have $$\mathrm{conv}(\mathrm{supp}(\pi_{x_1^0}^{(j)})) \cap (\max( X_2^{\tilde{j}, +}), \min(X_2^{\tilde{j}, -}))=\emptyset.$$
\item  For any $j > \tilde{j}$ and $x_1^0\in X_1^{j,0}$ the following holds: if $$\mathrm{conv}(\mathrm{supp}(\pi_{x_1^0}^{(j)})) \cap (\max( X_2^{\tilde{j}, +}), \min(X_2^{\tilde{j}, -}))\neq \emptyset$$ then $x_1^0 \in X_1^{\tilde{j},0}$.
\item If $j > \tilde{j}$ the $m_j\ge m_{\tilde{j}}$.
\end{enumerate}
\end{lemma}
\begin{proof}
(i) this follows immediately from the definition of $x_2^{0,i,-}(j)$ and $x_2^{0,i,+}(j)$ and \eqref{eq:oooorrdeeerr!} in Lemma \ref{lem:aux1} as well as the construction of $\pi^{(j+1)}$ in Algorithm \ref{alg:1}.\\
(ii) to show the first and second assertion we use the definition of $\lambda^{(j)}$ in Algorithm \ref{alg:1}, Remark \ref{rem:explained2} and Lemma \ref{lem:aux1}, which state that we always shift the right-most element of $X_2^{j,+}$ and the left-most point of $X_2^{j,-}$. Combining this with (i) in case $x_1\in X_1^{\hat{j},0}$ for some $\tilde{j}\le \hat{j}\le j$ concludes the proof. The third assertion follows directly from the first two. The last assertion is true for $j=0$ by assumption. It then follows for all $j\in \{1,\dots, N\}$ by plugging in $\tilde{j}=0$ into \eqref{eq:oha3}, which gives
\begin{align}\label{eq:oha}
  \max(X_2^{j, +})\le  \max(X_2^{0,+})\le \min(X_2^{0,-})\le  \min(X_2^{j, -}).
\end{align}
(iii)   by Lemma \ref{lem:aux} $X_1^{\tilde{j},0}\subseteq X_1^{j,0}$ holds. As $x_1^0\in X_1^{j,0}\setminus X_1^{\tilde{j},0}$ we have $x_1^0\in X_1^{\tilde{j},+}\cup X_1^{\tilde{j},-}$ and thus $\text{conv}(\mathrm{supp}(\pi_{x_1^0}^{(\tilde{j})})) \cap (\max( X_2^{\tilde{j}, +}), \min(X_2^{\tilde{j}, -}))=\emptyset$ by definition. For concreteness assume $x_1^0\in X_1^{\tilde{j},+}$, so that $\max( \mathrm{supp}(\pi_{x_1^0}^{(\tilde{j})}))\le\max( X_2^{\tilde{j},+})$. We now apply (ii) to see that $$\max( \mathrm{supp}(\pi_{x_1^0}^{(j)}))\le\max( \mathrm{supp}(\pi_{x_1^0}^{(\tilde{j})})) \le\max( X_2^{\tilde{j},+}).$$ Thus $\text{conv}(\mathrm{supp}(\pi_{x_1^0}^{(j)})) \cap (\max( X_2^{\tilde{j}, +}), \min(X_2^{\tilde{j}, -}))=\emptyset$ if $x_1^0\in X_1^{\tilde{j},+}$. The case $x_1^0\in X_1^{\tilde{j},-}$ follows from similar arguments. \\
(iv) This is the contraposition of (iii).\\
(v) We argue by contradiction: assume that $m_j < m_{\tilde{j}}$, i.e. there exist exchange tuples $T_1^j, T_2^j$ such that \eqref{eq:oooorrdeeerr!} holds. 
Set $$i_+:=\min\{i\in \{1,\dots, m_j\}: \ \max(\text{supp}(\pi_{x_1^{0,i}(j)}^{(j)}))>x_2^{+}(\tilde{j})\}$$ and 
$$ i_-:= \min\{i\in \{1,\dots, m_j\}: \ \max(\text{supp}(\pi_{x_1^{0,i}(j)}^{(j)}))>x_2^{-}(\tilde{j})\},$$ which are both well-defined by \eqref{eq:oha3}, and $\hat{m}_{\tilde{j}}:=i_--i_++1\le m_j<m_{\tilde{j}}.$ Consider $$\hat{T}_1^{\tilde{j}}:=(x_1^+(\tilde{j}), x_1^{0, i_+}(j), \dots, x_1^{0,i_-}(j), x_1^-(\tilde{j}))$$ and $$\hat{T}_2^{\tilde{j}}\coloneq \left(x_2^+(\tilde{j}),\min(\text{supp}(\pi^{(\tilde{j})}_{x_1^{0,i_+}(j)})),\max(\text{supp}(\pi^{(\tilde{j})}_{x_1^{0,i_+}(j)})), \dots, \max(\text{supp}(\pi^{(\tilde{j})}_{x_1^{0,i_-}(j)})), x_2^-(\tilde{j})\right).$$ By definition of $i_+, i_-$ and (iv) we have $x_1^{0, i_+}(j), \dots, x_1^{0,i_-}(j) \in X_1^{0,\tilde{j}}$. By (i) we conclude that $$\text{conv}(\text{supp}(\pi^{(\tilde{j})}_{x_1}))\cap \text{conv}(\text{supp}((\pi^{(\tilde{j})}_{\bar{x}_1}))\neq \emptyset$$ for neighbouring elements $x_1, \bar{x}_1$ in $\hat{T}_1^{\tilde{j}}$, so that suitable subsets of $\hat{T}_1^{\tilde{j}}$ and $\hat{T}_2^{\tilde{j}}$ satisfy \eqref{eq:oooorrdeeerr!}. In conclusion $m_{\tilde{j}}\le \hat{m}_{\tilde{j}}$ by Definition \ref{def:exchange}, a contradiction to $\hat{m}_{\tilde{j}}\le m_j < m_{\tilde{j}}$.
\end{proof}

Having established some basic properties of Algorithm \ref{alg:1} we now show that $N<\infty$. More concretely we have the following lemma, which should be compared to Lemma \ref{lem:W1_2}:
\begin{lemma}
Algorithm \ref{alg:1} terminates after at most $N \le |\mathrm{supp}(\mu)|(1+|\mathrm{supp}(\nu)|)$ steps.
\end{lemma}

\begin{proof}
The proof is analogous to the proof of Lemma \ref{lem:W1_2}.
Indeed, for $j\in \N_0$ define the set 
\begin{align*}
I^{(j)}(x_1):=\{ x_2\in \mathrm{supp}(\nu) \ | \ x_2\le \max(\mathrm{supp}(\pi^{(j)}_{x_1}) )\}
\end{align*}
for all $x_1\in X_1^{j,+}$,
\begin{align*}
I^{(j)}(x_1):=\{ x_2\in \mathrm{supp}(\nu) \ | \ \min(\mathrm{supp}(\pi^{(j)}_{x_1})\le x_2\le \max(\mathrm{supp}(\pi^{(j)}_{x_1}) )\}
\end{align*}
for all $x_1\in X_1^{j,0}$ and
\begin{align*}
I^{(j)}(x_1):=\{ x_2\in \mathrm{supp}(\nu) \ | \ x_2\ge \min(\mathrm{supp}(\pi^{(j)}_{x_1}) )\}
\end{align*}\\
 for all $x_1\in X_1^{j,-}$. Let us again remark that we have $\text{supp}(\pi_{x_1}^{(j)})\subseteq I^{(j)}(x_1)$, where the inclusion is typically strict.
 By the definition of $\lambda^{(j)}$ in Algorithm \ref{alg:1} and Remark \ref{rem:explained2} we note that in every step $j$ at least one of the following four cases occurs:
\begin{enumerate}[(i)]
\item $|X_1^{j,+}|-|X_1^{j+1,+}|=1$ or $|X_1^{j,-}|-|X_1^{j+1,-}|=1$.
\item $|I^{(j)}(x_1^+(j))|-|I^{(j+1)}(x_1^+(j))|\ge 1$.
\item $|I^{(j)}(x_1)|-|I^{(j+1)}(x_1)|\ge 1$  for some $x_1\in X_1^{j,0}$.
\item $|I^{(j)}(x_1^-(j))|-|I^{(j+1)}(x_1^-(j))|\ge 1$.
\end{enumerate}
Again, Remark \ref{rem:explained2} states that  $X_1^{j,+}\subseteq X_1^{0,+}$ and $X_1^{j,-}\subseteq X_1^{0,-}$. Combining this with Lemma \ref{lem:aux3}.(i),(ii) we conclude that $I^{(j)}(x_1)\subseteq I^{(0)}(x_1)$ for all $x_1\in \mathrm{supp}(\mu)$.
Thus the number of steps $N$ is bounded by 
\begin{align*}
&|X_1^{0,+}|+|X_1^{0,-}|+\sum_{x_1\in X_1^{0,-}} |I^{(0)}(x_1)| +\sum_{x_1\in X_1^{0,0}} |I^{(0)}(x_1)| +\sum_{x_1\in X_1^{0,+}} |I^{(0)}(x_1)|\\
&\le |\mathrm{supp}(\mu)|+|\mathrm{supp}(\mu)||\mathrm{supp}(\nu)|=|\mathrm{supp}(\mu)|(1+|\mathrm{supp}(\nu)|).
\end{align*}
This concludes the proof.
\end{proof}

We now argue that \eqref{eq:rea} holds for finitely supported measures $\pi \in \Pi(\mu,\nu)$.  By the triangle inequality
\begin{align}\label{eq:triangle}
W_1^{nd}(\pi^{(0)}, \pi^{(N)})\le \int W_1(\pi^{(0)}_{x_1}, \pi^{(N)}_{x_1})\,\mu(dx_1) \le  \sum_{j=0}^{N-1}\int W_1(\pi^{(j)}_{x_1}, \pi^{(j+1)}_{x_1})\,\mu(dx_1).
\end{align}
Thus it is sufficient to consider $\int W_1(\pi^{(j)}_{x_1}, \pi^{(j+1)}_{x_1})\,\mu(dx_1)$ individually for each $j\in \{1, \dots, N-1\}$. Furthermore, if in step $j$ of Algorithm \ref{alg:1} case (i) is carried out, then by Lemma \ref{lem:aux}.(iii) we have
\begin{align}\label{eq:nochmher}
\begin{split}
\int W_1(\pi^{(j)}_{x_1}, \pi^{(j+1)}_{x_1})\,\mu(dx_1)&\le 2\lambda^{(j)}\\
&\qquad=\int \left| \int (x_2-x_1)\,\pi^{(j)}(dx_2)\right|\,\mu(dx_1)\\
&\qquad\qquad-\int \left| \int (x_2-x_1)\,\pi^{(j+1)}(dx_2)\right|\,\mu(dx_1).
\end{split}
\end{align}
Let us now denote by $N_0\le N$ the first step, in which case (ii) in Algorithm \ref{alg:1} is carried out. In light of Lemma \ref{lem:aux3}.(ii), case (ii) in Algorithm \ref{alg:1} is then carried out for all $j\ge N_0$. As we can handle case (i) according to Section \ref{sec:thm_dispersion} and in particular \eqref{eq:nochmher} holds, we can simply start our analysis in step $j=N_0$. To avoid writing $N_0$ everywhere, we make the following standing assumption for the remainder of this section:

\begin{assumption}\label{ass:standing}
Let $\pi\in \Pi(\mu,\nu)$ be such that there exist no pairs $(x_{1}^-, x_{1}^+)\in X_{1}^-\times X_{1}^+$ and $\left(x_{2}^-,x_{2}^+\right)\in \text{supp}(\pi_{x_{1}^-})\times \text{supp}(\pi_{x_{1}^+})$ such that $x_{2}^-<x_{2}^+$.
\end{assumption}

Recalling Lemma \ref{lem:aux3}.(ii), Assumption \ref{ass:standing} is then satisfied for $\pi^{(j)}\in \Pi(\mu,\nu)$ for all $j\in \{0, \dots, N-1\}$.\\

In order get some intuition for the general result we now treat the case $N=1$.
\begin{lemma}\label{lem:aux5}
Assume that $N=1$. Then $\lambda^{(0)}$ is of order $1/m_0^2$.
\end{lemma}
\begin{proof}
Note that by \eqref{eq:oooorrdeeerr!} the intervals $\left[x_2^{0,i+1,-}(0), x_2^{0,i,+}(0)\right)$, $i=0,1,2,\dots, m_0$ are disjoint (see Figure \ref{fig:2_def}). Furthermore, by definition of $\lambda^{(0)}$
\begin{align}\label{eq:one}
\mu\left(x_1^{0,i+1}(0)\right)\ge \pi^{(0)}\left(\left(x_1^{0,i+1}(0), x_2^{0,i+1,-}(0)\right)\right)
\ge \frac{\lambda^{(0)}}{x_2^{0,i,+}(0)-x_2^{0,i+1,-}(0)}, \quad i=0,1,2,\dots, m_0
\end{align}
has to hold. Summing \eqref{eq:one} over $i=0,1,2,\dots, m_0$ this implies
\begin{align}\label{eq:bound}
1\ge  \sum_{i=0}^{m_0} \mu\left(x_1^{0,i+1}(0)\right)\ge \sum_{i=0}^{m_0} \frac{\lambda^{(0)}}{x_2^{0,i,+}(0)-x_2^{0,i+1,-}(0)}\ge \frac{\lambda^{(0)}(m_0+1)^2}{2\tilde{K}},
\end{align}
where the last inequality follows from the arithmetic-harmonic mean inequality as $$\left[x_2^{0,i+1,-}(0), x_2^{0,i,+}(0)\right),\quad i=0,1,2,\dots, m_0$$ are disjoint and $\text{supp}(\nu)\subseteq [-\tilde{K},\tilde{K}]$ for some $\tilde{K}>0$. This shows the desired growth for $\lambda^{(0)}$.
\end{proof}

The general case follows from more involved arguments. In particular we will identify points in the support of $\mu$ in each step $j$ of Algorithm \ref{alg:1}, where barycentre mass is only ever shifted in one direction (namely downwards in our case). These yield an upper bound on $\sum_{j=0}^{N-1} \lambda^{(j)}$ by a similar argument as in Lemma \ref{lem:aux5} above:
\begin{lemma}\label{lem:aux4}
Fix $\delta>0$ and assume that $\text{supp}(\nu)\subseteq [-\tilde{K},\tilde{K}]$. Let $\delta>0$. Then there exists a constant $K=K(\delta, \tilde{K})$ such that 
\begin{align}
\inf_{\tilde{\pi}\in \mathcal{M}(\mu,\nu)} W^{nd}_1(\pi,\tilde{\pi})\le K \epsilon_{\pi}+\delta.
\end{align} 
\end{lemma}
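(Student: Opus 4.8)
The plan is to run Algorithm \ref{alg:1} on $\pi$, produce $\pi_{mr}=\pi^{(N)}\in\mathcal{M}(\mu,\nu)$, and bound $W^{nd}_1(\pi,\pi_{mr})$ against $\epsilon$ up to the additive error $\delta$. Two preliminary facts would be recorded first: testing $\mu\preceq_c\nu$ against the convex function $x\mapsto(|x|-\tilde K)^+$ (whose $\nu$-integral is $0$) shows $\text{supp}(\mu)\subseteq[-\tilde K,\tilde K]$ as well, and since $\mu\preceq_c\nu$ forces equal means, $\sum_{x_1\in X_1^-}|\epsilon(x_1)|\,\mu(x_1)=\sum_{x_1\in X_1^+}\epsilon(x_1)\,\mu(x_1)=\epsilon/2$, where $\epsilon(x_1):=\int(x_2-x_1)\,\pi_{x_1}(dx_2)$. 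This quantity $\epsilon/2$ is exactly the total barycentre mass the algorithm transports on each side, and by Lemma \ref{lem:aux3} no step ever undoes progress, so this total is never exceeded.

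Since $\pi^{(j)}$ and $\pi^{(j+1)}$ share their first marginal $\mu$, the causal decomposition of $W^{nd}_1$ already used in the proof of Lemma \ref{lem:upperbound} gives $W^{nd}_1(\pi^{(j)},\pi^{(j+1)})\le\int W_1(\pi^{(j)}_{x_1},\pi^{(j+1)}_{x_1})\,\mu(dx_1)$, so by the triangle inequality $W^{nd}_1(\pi,\pi_{mr})\le\sum_{j=0}^{N-1}\int W_1(\pi^{(j)}_{x_1},\pi^{(j+1)}_{x_1})\,\mu(dx_1)$; I would estimate each summand according to the type of step. A case-\textit{(1)} step modifies only the two fibres over $x_1^-(j),x_1^+(j)$ by exchanging a mass between the atoms $x_2^-(j),x_2^+(j)$; writing $\beta_j$ for the amount of barycentre mass it rectifies, its contribution is exactly $2\beta_j$. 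A case-\textit{(2)} step modifies the $m_j+2$ fibres over $x_1^+(j),x_1^{0,1}(j),\dots,x_1^{0,m_j}(j),x_1^-(j)$: the two end fibres each shift one atom (cost $a_0^{(j)}$ apiece) and each of the $m_j$ middle fibres performs two compensating shifts (cost $a_0^{(j)}$ apiece), so the total contribution is $2a_0^{(j)}(m_j+1)$, where $a_0^{(j)}$ is the barycentre mass transported in that step.

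The case-\textit{(1)} contributions then sum to at most $2\cdot\epsilon/2=\epsilon$. For the case-\textit{(2)} contributions I would invoke the estimate of Lemma \ref{lem:aux5}, whose proof is valid at every step: the half-open intervals $[x_2^{0,i+1,-}(j),x_2^{0,i,+}(j))$, $i=0,\dots,m_j$, are disjoint by \eqref{eq:oooorrdeeerr!} and contained in $[-\tilde K,\tilde K]$, so the constraints $\mu(x_1^{0,i+1}(j))\ge a_0^{(j)}/(x_2^{0,i,+}(j)-x_2^{0,i+1,-}(j))$ together with the arithmetic-harmonic mean inequality give $1\ge\sum_{i=1}^{m_j}\mu(x_1^{0,i}(j))\ge a_0^{(j)}m_j^2/(2\tilde K)$, i.e. $a_0^{(j)}m_j^2\le 2\tilde K$. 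Fixing a threshold $M=M(\delta,\tilde K)$ and splitting the case-\textit{(2)} steps: those with $m_j\le M$ contribute at most $2(M+1)\sum_j a_0^{(j)}\le(M+1)\epsilon$ (the sum over case-\textit{(2)} steps being $\le\epsilon/2$); those with $m_j>M$ satisfy $a_0^{(j)}<2\tilde K/M^2$, hence cost at most $2a_0^{(j)}(m_j+1)\le 4\sqrt{2\tilde K a_0^{(j)}}<8\tilde K/M$ each, and — exploiting this smallness together with the monotonicity of the sets $X_1^{j,\pm}$ from Lemma \ref{lem:aux3} and the conservation $\sum_j a_0^{(j)}\le\epsilon/2$ — their aggregate contribution can be made $\le\delta$ by choosing $M$ large enough. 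This yields $W^{nd}_1(\pi,\pi_{mr})\le(M+2)\epsilon+\delta=:K\epsilon+\delta$ with $K=K(\delta,\tilde K)$, which proves the lemma since $\pi_{mr}\in\mathcal{M}(\mu,\nu)$.

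The hard part is this last estimate: a priori the number of "long" case-\textit{(2)} steps can depend on $|\text{supp}(\pi)|$, so the substance of the proof is to show that, although many such steps may be needed, each carries a factor $a_0^{(j)}\lesssim\tilde K/m_j^2$ while the cumulative barycentre mass they transport is only $\epsilon/2$, and — crucially — that once a fibre's barycentre has been rectified its disintegration support can only shrink (Lemma \ref{lem:aux3}), so that the long steps cannot indefinitely re-use the same mass; combining these facts their total cost is shown to be absorbed into the arbitrarily small error $\delta$. Everything else reduces to a routine computation with the causal decomposition of $W^{nd}_1$.
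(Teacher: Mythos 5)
Your bookkeeping up to the split is sound and matches the paper: running Algorithm \ref{alg:1}, decomposing $W_1^{nd}$ along the identity coupling in the first coordinate, costing a case-\textit{(2)} step at $2(m_j+1)a_j$, handling case-\textit{(1)} steps and the short case-\textit{(2)} steps by $2\sum_j a_j=\epsilon$, and the per-step estimate $a_j m_j^2\lesssim \tilde K$ from (the proof of) Lemma \ref{lem:aux5} applied at every step. But the final claim --- that the aggregate contribution of the steps with $m_j>M$ can be made $\le\delta$ using only this per-step bound, the conservation $\sum_j a_j\le \epsilon/2$, and the monotonicity statements of Lemma \ref{lem:aux3} --- is exactly the part you do not prove, and it does not follow from those ingredients. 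Indeed, nothing you cite excludes the scenario of roughly $\epsilon\, (m^*)^2/\tilde K$ long steps, each with $m_j=m^*>M$ and $a_j\approx \tilde K/(m^*)^2$: every such configuration satisfies the per-step bound and the mass conservation, yet its aggregate cost is $\sum_j 2(m_j+1)a_j\approx \epsilon\, m^*$, which is unbounded relative to $\epsilon$ (and certainly not $\le\delta$) since $m^*$ may grow with $|\mathrm{supp}(\pi)|$. In other words, a bound on each $a_j$ individually plus a bound on $\sum_j a_j$ cannot control $\sum_j m_j a_j$; the informal appeal to ``mass cannot be re-used'' is the right intuition but is not an argument.

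What the paper actually proves, and what your proposal is missing, is the \emph{collective} estimate $\sum_{j\in J(m)} a_j\le 16\tilde K/(m-7)^2$, uniformly in $\epsilon$ and in the number of steps, i.e.\ the long steps taken together can only ever transport $O(\tilde K/m^2)$ barycentre mass. This is where all the work lies: one fixes the disjoint intervals $A_1,\dots,A_{\tilde m}$ built from the exchange tuple of the \emph{first} long step $\tilde j$ (see \eqref{eq:resc}), proves the combinatorial Lemma \ref{lem:aux2} (every later long step must place a full interval $[x_2^{0,i_{j,k},-}(j),x_2^{0,i_{j,k},+}(j))$ inside each $A_k$, via the minimality of $m_{\tilde j}$ in Definition \ref{def:exchange} and Lemma \ref{lem:aux3}), and then performs the non-reuse accounting \eqref{eq:complex2}, which dominates $\sum_{(j,k)\in\hat I(x_1)}a_j$ by $\mu(x_1)|A_{k(x_1)}|$ because mass shifted into the left endpoint of such an interval is only ever shifted further left afterwards. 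Even granted this, converting a bound on $\sum_{j\in J(m)}a_j$ into one on $\sum_{j\in J(m)}(m_j+1)a_j$ requires the additional layered summation over thresholds in \eqref{eq:comeon}. Your proposal explicitly defers all of this to ``the hard part'' without supplying it, so as written it establishes only the easy half of the lemma.
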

\begin{proof}
Let us fix $m\ge 5$ and define $J(m)\coloneq \{j\in \{1,\dots, N-1\} \ : \ m_j\ge m\}$ as well as $\tilde{j}\coloneq \min(J(m))$. Next we define the disjoint intervals
\begin{align}\label{eq:resc}
A_1&\coloneq \left[x_2^{0,1,-}(\tilde{j}), x_2^{0,5,-}(\tilde{j})\right),\nonumber\\
A_2&\coloneq \left[x_2^{0,5,-}(\tilde{j}), x_2^{0,9,-}(\tilde{j})\right),\nonumber\\
\vdots&\qquad \vdots\nonumber\\
A_{\tilde{m}} &\coloneq \left[x_2^{0,4(\tilde{m}-1)+1,-}(\tilde{j}), x_2^{0,4\tilde{m}+1,-}(\tilde{j})\right),
\end{align}
where $\tilde{m}\coloneq\lfloor m_{\tilde{j}}/4\rfloor$. Lemma \ref{lem:aux2} states, that for every $j=\tilde{j},\dots, N-1$ and every $k=1,\dots, \tilde{m}$ there exists $i_{j,k}\in \{1, \dots, m_j\}$ such that $$[x_2^{0,i_{j,k}, -}(j), x_2^{0,i_{j,k},+}(j))\subseteq A_{k}.$$ We make the convention that $[x_2^{0,i_{j,k}, -}(j), x_2^{0,i_{j,k},+}(j))$ is the left-most such interval, i.e. 
$$i_{j,k}\coloneq \min \left\{ i\in \{1, \dots, m_j\}\ | \ [x_2^{0,i, -}(j), x_2^{0,i,+}(j))\subseteq A_k \right\}.$$
We denote the corresponding left-neighbouring points $x_1^{0,i_{j,k}-1}(j)$ in $T_1^j$ by $\overleftarrow{x}_1^{j,k}$. Fix now $x_1\in \mathrm{supp}(\mu)$. \\
Assume for a second that $x_1=\overleftarrow{x}_1^{j,k}=\overleftarrow{x}_1^{\hat{j},\hat{k}}$ for some $ \hat{j}>j\ge \tilde{j}$ and some $1\le k, \hat{k}\le \tilde{m}$. By Lemma \ref{lem:rev} we obtain $k=\hat{k}$.
In particular the sets 
\begin{align}\label{eq:disjoint}
\left\{x_1\in \mathrm{supp}(\mu)\ : \ x_1= \overleftarrow{x}_1^{j,k}\text{ for some }j \in J(m)\right\}_{k=1, \dots, \tilde{m}}
\end{align}
are disjoint and trivially
\begin{align*}
\bigcup_{k=1}^{\tilde{m}} \left\{x_1\in \mathrm{supp}(\mu)\ : \ x_1= \overleftarrow{x}_1^{j,k} \text{ for some }j \in J(m)\right\} \subseteq \mathrm{supp}(\mu).
\end{align*}

Assume now that for $\hat{j}> j\ge\tilde{j}$ we have $x_1=\overleftarrow{x}_1^{j,k}=\overleftarrow{x}_1^{\hat{j},k}$, $x_1=x_1^{0,i}(j)=x_1^{0,\hat{i}}(\hat{j})$ and $$\left((x_1, x_2^{0,i,+}(j)),(x_1,x_2^{0,i+1,-}(j)), (x_1, x_2^{0,\hat{i},+}(\hat{j}))\right)\in \text{supp}(\rho^{\{j,j+1,\hat{j}\}}).$$ Lemma \ref{lem:monotone} states that then $
x_2^{0,i+1,-}(j)\ge x_2^{0,\hat{i},+}(\hat{j})$.\\
Next let us define $J(m,k,x_1)\coloneq \{j\in J(m)\ :\  \overleftarrow{x}_1^{j,k}=x_1\}$. The above conclusion from Lemma \ref{lem:monotone} together with the definition of $\lambda^{(j)}$ in Algorithm \ref{alg:1} and the definition of $\rho$ in Definition \ref{definition:rho} implies that
\begin{align*}
\sum_{j\in J(m,k,x_1)} \lambda^{(j)}&\stackrel{\text{Def. of }\lambda^{(j)}}{=}\sum_{j\in J(m,k,x_1)} \mu(x_1) \int_{A_k\times A_k} (x_2-y_2)\,\rho^{(j)}_{(x_1,x_1)}(dx_2,dy_2)\\
&\stackrel{\text{Def. of }\rho}{=} \mu(x_1) \int \sum_{j\in J(m,k,x_1)} (z_2^{j}-z_2^{j+1})\mathds{1}_{\{z_2^{j}, z_2^{j+1}\in A_k\}}\,  \rho_{(x_1, \dots, x_1)}(dz^0_2, \dots, dz^N_2)\\
&\le \mu(x_1) |A_k|.
\end{align*}
Note that for the equality above we have just pulled the sum inside the integral and used the definition of $\rho$.  For the inequality we used that $\rho_{(x_1, \dots, x_1})$-a.s. we have $z_2^j\ge z_2^{j+1}$ by the definition $ \overleftarrow{x}_1^{j,k}$ and $z_2^{j+1}\ge z_2^{\hat{j}}$ by Lemma \ref{lem:monotone} for all $j\le \hat{j}$, so that all terms appearing in the sum are ordered and contained in $A_k$.
Summing over $x_1\in \mathrm{supp}(\mu)$ and $k=1, \dots, \tilde{m}$ this implies
\begin{align*}
 \sum_{k=1}^{\tilde{m}} \sum_{j\in J(m)} \frac{\lambda^{(j)}}{|A_k|} &\stackrel{\eqref{eq:disjoint}}{=}  \sum_{k=1}^{\tilde{m} } \sum_{x_1\in \mathrm{supp}(\mu)}\sum_{j\in J(m,k, x_1)} \frac{\lambda^{(j)}}{|A_k|}\\
&= \sum_{x_1\in \mathrm{supp}(\mu)} \sum_{k=1}^{\tilde{m} } \sum_{j\in J(m,k, x_1)} \frac{\lambda^{(j)}}{|A_k|}\\
 & \le \sum_{x_1\in \mathrm{supp}(\mu)}\mu(x_1).
\end{align*}
In particular
\begin{align*}
1=\sum_{x_1\in \text{supp}(\mu)} \mu(x_1)&\ge\sum_{j\in J(m)} \sum_{k=1}^{\tilde{m}} \frac{\lambda^{(j)}}{|A_k|}\\
&=\sum_{j\in J(m)} \sum_{k=1}^{\tilde{m}} \frac{\lambda^{(j)}}{x_2^{0,4k+1,-}-x_2^{0,4(k-1)+1,-}}\\
&\ge \sum_{j\in J(m)}  \frac{\lambda^{(j)}\,\tilde{m}^2}{2\tilde{K}} \ge \sum_{j\in J(m)}  \frac{\lambda^{(j)}\,(m-3)^2}{32\tilde{K}}
\end{align*}
noting that $\tilde{m} \ge (m-3)/4$.
This implies  $$\sum_{j\in J(m)}\lambda^{(j)} \le \frac{32\tilde{K}}{(m-3)^2}$$ and thus
\begin{align}\label{eq:comeon}
\sum_{j\in J(m)}2(m_j+1)\,\lambda^{(j)}= 2m \sum_{j\in J(m)} \lambda^{(j)}+ 2\sum_{\hat{m}\ge m}\sum_{j\in J(\hat{m})}\lambda^{(j)}&\le \frac{64 \tilde{K}m}{(m-3)^2}+\sum_{\hat{m}\ge m}\frac{64\tilde{K}}{(\hat{m}-3)^{2}}.
\end{align}
Given $\delta>0$ there exists $m>0$ such that the sum on the right hand side of \eqref{eq:comeon} is less than $\delta$. We take the smallest such $m$ and define $K(\delta, \tilde{K})\coloneq m$.
Using Lemma \ref{lem:aux}.(iii) and the triangle inequality we conclude that 
\begin{align*}
W_1^{nd}(\pi^{(0)}, \pi^{(N)})&\le \int W_1(\pi^{(0)}_{x_1}, \pi^{(N)}_{x_1})\,\mu(dx_1)\\
& \le  \sum_{j=0}^{N-1}\int W_1(\pi^{(j)}_{x_1}, \pi^{(j+1)}_{x_1})\,\mu(dx_1)\\
&\le  2 \sum_{j=0}^{N-1} (m_j+1)\lambda^{(j)}\\
&=  2\sum_{j\in J(K(\delta,\tilde{K}))} (m_j+1) \lambda^{(j)} +2\sum_{j\notin J(K(\delta,\tilde{K}))} (m_j+1) \lambda^{(j)} \\
&\le \delta + 2K(\delta, \tilde{K})  \Bigg(\sum_{j\notin J(K(\delta,\tilde{K}))} \int \left|\int (x_2-x_1)\,\pi^{(j)}_{x_1}(dx_2) \right|\,\mu(dx_1)\\
&-\int \left|\int (x_2-x_1)\,\pi^{(j+1)}_{x_1}(dx_2) \right|\,\mu(dx_1)\Bigg)\\
&\le  \delta + 2K(\delta, \tilde{K}) \Bigg( \sum_{j=0}^{N-1} \int \left|\int (x_2-x_1)\,\pi^{(j)}_{x_1}(dx_2) \right|\,\mu(dx_1)\\
&\quad-\int \left|\int (x_2-x_1)\,\pi^{(j+1)}_{x_1}(dx_2) \right|\,\mu(dx_1)\Bigg)\\
&=\delta+2K(\delta,\tilde{K})\epsilon_{\pi}.
\end{align*}
In particular $K=2K(\delta,\tilde{K})$ depends on $\delta$ and the support of $\nu$ (via $\tilde{K}$) only.
\end{proof}

We have used the following lemmas, which we state in the notation of Lemma \ref{lem:aux4}:

\begin{lemma}\label{lem:aux2}
Let $\mu \in \Pi(\mu,\nu)$ be finitely supported, fix $m\in \N$ and recall $J(m)= \{j\in \{1,\dots, N\} \ : \ m_j\ge m\}$ and $\tilde{j}\coloneq \min(J(m))$. Then for every $j= \tilde{j},\dots,N-1$ and every $k=1, \dots, \tilde{m}=\lfloor m_{\tilde{j}}/4\rfloor$, there exists $i_{j,k}\in \{1, \dots, m_j\}$ such that $[x_2^{0,i_{j,k}, -}(j), x_2^{0,i_{j,k},+}(j))\subseteq A_{k}$.
\end{lemma}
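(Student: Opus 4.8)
The plan is to combine the chain structure encoded in \eqref{eq:oooorrdeeerr!} with the minimality of $m_{\tilde j}$ and the monotonicity properties of Algorithm \ref{alg:1} recorded in Lemma \ref{lem:aux3}. First I would record a covering property: at any step $\ell$, writing $S_i^\ell\coloneq[x_2^{0,i,-}(\ell),x_2^{0,i,+}(\ell))$, the ordering \eqref{eq:oooorrdeeerr!} gives $x_2^{0,i+1,-}(\ell)<x_2^{0,i,+}(\ell)\le x_2^{0,i+2,-}(\ell)$, so consecutive spans overlap while spans two apart are disjoint; consequently $\bigcup_i S_i^\ell\supseteq[\max X_2^{\ell,+},\min X_2^{\ell,-}]$ and each $S_i^\ell\subseteq[x_2^{0,i,-}(\ell),x_2^{0,i+2,-}(\ell))$. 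Next I would locate $A_k$ inside the step-$j$ picture: by Lemma \ref{lem:aux3}(ii) (and, for the $X_1^{j,\pm}$-disintegrations, the fact used in the termination argument that at each exchange their extremal atoms only move inward) one has $\max X_2^{j,+}\le\max X_2^{\tilde j,+}$ and $\min X_2^{j,-}\ge\min X_2^{\tilde j,-}$ for all $j\ge\tilde j$; together with \eqref{eq:oooorrdeeerr!} at step $\tilde j$ this shows $A_k\subseteq[\max X_2^{j,+},\min X_2^{j,-}]$ for $2\le k\le\tilde m$, so the chain $T_2^j$ covers $A_k$. The block $A_1$, whose left endpoint may lie to the left of $\max X_2^{j,+}$, would be treated by the same argument applied to $A_1\cap[\max X_2^{j,+},\infty)$, using that the first chain span $S_1^j$ anyway reaches below $\max X_2^{j,+}$.

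The quantitative heart is a width bound. At step $\tilde j$ every disintegration whose support meets $M\coloneq(\max X_2^{\tilde j,+},\min X_2^{\tilde j,-})$ has a martingale barycentre by Lemma \ref{lem:aux3}(iv); and since $m_{\tilde j}$ is the smallest length of a tuple satisfying \eqref{eq:oooorrdeeerr!} (Definition \ref{def:exchange}), the support of any such disintegration can overlap only boundedly many consecutive chain spans $S_i^{\tilde j}$ — otherwise inserting that disintegration into the chain in place of the intermediate points it spans produces a strictly shorter tuple still satisfying \eqref{eq:oooorrdeeerr!}. Hence each such support is contained in some $[x_2^{0,a,-}(\tilde j),x_2^{0,a+c,-}(\tilde j))$ for a small absolute constant $c$. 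By Lemma \ref{lem:aux3}(v) and (vi), every span $S_i^j$ that meets $M$ belongs to a point already lying in $X_1^{\tilde j,0}$, and by Lemma \ref{lem:aux3}(ii) it is contained in that point's step-$\tilde j$ support; so the width bound transfers to all spans $S_i^j$ meeting $M$, in particular to all spans $S_i^j$ meeting $A_k$.

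Finally I would argue by contradiction. Suppose that no $S_i^j$ is contained in $A_k=[p,q)$. The spans $S_i^j$ meeting $A_k$ form a consecutive run whose left-most member contains $p$; since none lies inside $A_k$, each of them overflows $A_k$ to the left of $p$ or to the right of $q$, and by the width bound no single span can do both. The width bound confines a left-overflowing span to an interval whose upper end lies within the first few slots of $A_k$, and a right-overflowing span to an interval whose lower end lies within the last few slots of $A_k$; because $A_k$ has width $4$, these two regions do not between them cover all of $A_k$, leaving a nonempty sub-interval of $A_k$ that is covered by neither type — yet it is covered by $T_2^j$, and the span covering it must therefore be contained in $A_k$. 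This contradiction produces the required index $i_{j,k}$; the case $j=\tilde j$ is in any event immediate from \eqref{eq:oooorrdeeerr!}, e.g.\ with $i_{\tilde j,k}=4(k-1)+2$.

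I expect the main difficulty to lie in the width bound: in making the substitution-shortcut rigorous (verifying that the shortened tuple still obeys the precise ordering \eqref{eq:oooorrdeeerr!}, including for disintegrations with sparse support and for the chain pieces adjacent to $x_1^+$ and $x_1^-$) and in confirming that the resulting constant $c$ is small enough that a block of width $4$ cannot be straddled by overflowing spans from both sides simultaneously — alongside the routine but delicate bookkeeping of half-open intervals, possibly degenerate (single-atom) martingale disintegrations, and the endpoint block $k=1$.
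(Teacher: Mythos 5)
This is essentially the paper's own argument: the ``width bound'' you defer is exactly what the paper extracts from minimality of $m_{\tilde{j}}$ in its cases (a) and (b) — a step-$j$ span meeting the middle region belongs, via Lemma \ref{lem:aux3}\textit{(vi)} and \textit{(ii)}, to a point whose step-$\tilde{j}$ support is at least as wide, and that support cannot contain two consecutive slots $[x_2^{0,a,-}(\tilde{j}),x_2^{0,a+2,-}(\tilde{j})]$, since the point could then replace $x_1^{0,a}(\tilde{j})$ and $x_1^{0,a+1}(\tilde{j})$ in the exchange tuple, contradicting Definition \ref{def:exchange}; so your constant is $c=2$, the width-$4$ blocks suffice, and your handling of $A_1$ matches the paper's convention of starting from the first chain element above $x_2^{0,1,-}(\tilde{j})$. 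The only difference is organizational: the paper walks along $T_2^j$ from the left and squeezes one span into each $A_k$ directly, whereas you phrase the same ingredients as a covering-plus-pigeonhole contradiction, which is equivalent.
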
 

\begin{proof}
We will prove the lemma by contradiction. In particular we will make use of the properties of Algorithm \ref{alg:1} established in Lemmas \ref{lem:aux} and \ref{lem:aux3} above. Let us fix $j \ge \tilde{j}$. To simplify notation we consider the interval $A_1=[x_2^{0,1,-}(\tilde{j}), x_2^{0,5,-}(\tilde{j}))$; the arguments for general $k$ are the same. We note that by equation \eqref{eq:oha} we have 
\begin{align}\label{eq:oha2}
x_2^+(j)\le \max(X_2^{\tilde{j}, +}) \le  \min(X_2^{\tilde{j}, -}) \le  x_2^-(j).
\end{align}
We can assume for notational simplicity that $x_2^+(j)\ge x_2^{0,1,-}(\tilde{j})$: otherwise we consider the smallest $x_2$ contained in the set $\{x_2^+(j), x_2^{0,1,+}(j), x_2^{0,2,+}(j), \dots, x_2^{0,m_j,+}(j)\}$ satisfying $x_2\ge x_2^{0,1,-}(\tilde{j})$ and shift the argument to start from this element instead of $x_2^+(j)$ accordingly. Such a smallest $x_2$ exists by Lemma \ref{lem:aux1} and \eqref{eq:oha2}.\\ 
We now consider the two cases 
\begin{enumerate}
\item [(a)] $x_2^{0,1, -}(j) \ge x_2^{0,1,-}(\tilde{j})$ and
\item [(b)] $x_2^{0,1, -}(j)<x_2^{0,1,-}(\tilde{j})$
\end{enumerate}
separately. Cases (a) and (b) correspond to Figures \ref{fig:2} and \ref{fig:3}, where we have drawn the first elements of $T_2^j$ in blue and $T_2^{\tilde{j}}$ in green. Let us first consider case (a) and note that by Lemma \ref{lem:aux3}.(v) and the definition of $J(m)$ we have $m_{\tilde{j}} \le m_j$ and $m_{\tilde{j}}$ was chosen minimally according to Definition \ref{def:exchange}.
\begin{figure}
\begin{tikzpicture}[line cap=round,line join=round,x=1cm,y=1cm]
\draw[-,color=black] (-1.5,0) -- (9,0) node[right]{$\mu$}; 
\draw[-,color=black] (-1.5,3) -- (9,3)  node[right]{$\nu$}; 
\draw[->,line width=2pt, color=darkspringgreen] (0,0) node[below] {\footnotesize $x_1^+$} -- (1,3)node[above] {\footnotesize $x_2^{+}$};
\draw[->, line width=2pt, color=darkspringgreen] (1.5,0)  -- (0.25,3) node[above] {\footnotesize $x_2^{0,1,-}$};
\draw[->,dashed, line width=2pt, color=azure] (-1,0) node[below] {\footnotesize $x_1^+(j)$} -- (1,3);
\draw[->,dashed, line width=2pt, color=azure] (2.25,0) node[below] {\footnotesize $x_2^{0,1}(j)$} -- (0.5,3);
\draw[->,dashed, line width=2pt, color=red] (2.25,0)  -- (4,3);
\draw[->,dashed, line width=2pt, color=azure] (2.25,0)  -- (3.75,3);
\draw[->,line width=2pt, color=darkspringgreen] (1.5,0) node[below] {\footnotesize $x_1^{0,1}$} -- (2.75,3) node[above] {\footnotesize $x_2^{0,1,+}$};
\draw[->,line width=2pt, color=darkspringgreen] (3.25,0) -- (2,3)node[above] {\footnotesize $x_2^{0,2,-}$};
\draw[->,line width=2pt, color=darkspringgreen] (3.25,0) node[below] {\footnotesize $x_1^{0,2}$}-- (4.5,3) node[above]  {\footnotesize $x_2^{0,2,+}$};
\draw[->,line width=2pt, color=darkspringgreen] (5,0) node[below] {\footnotesize $x_1^{0,3}$} -- (6.25,3) node[above]  {\footnotesize $x_2^{0,3,+}$};
\draw[->,line width=2pt, color=darkspringgreen] (5,0) -- (3.75,3) node[above]  {\footnotesize $x_2^{0,3,-}$};
\draw[->,line width=2pt, color=darkspringgreen] (6.75,0) node[below] {\footnotesize $x_1^{0,4}$} -- (5.5,3) node[above]  {\footnotesize $x_2^{0,4,+}$};
\draw[->,line width=2pt, color=darkspringgreen] (6.75,0) -- (8,3) node[above]  {\footnotesize $x_2^{0,4,-}$};
\draw[->,line width=2pt, color=darkspringgreen] (8.5,0) node[below] {\footnotesize $x_1^{0,5}$}-- (7.25,3) node[above]  {\footnotesize $x_2^{0,5,-}$};
\end{tikzpicture}
\caption{Case $(a)$: The first elements of $T_2^{\tilde{j}}$ (green) and of $T_2^{j}$ (blue dotted). The red dotted arrows show values which lead to a contradiction to Definition \ref{def:exchange}. }
\label{fig:2}
\end{figure}
We conclude that $x_2^{0,1,+}(j) \le x_2^{0,3,-}(\tilde{j})$: indeed as $x_2^+(\tilde{j})\ge x_2^+(j)>x_2^{0,1,-}(j)$ by \eqref{eq:oha2}, we otherwise have $[x_2^{+}(\tilde{j}), x_2^{0,3,-}(\tilde{j})] \subset [x_2^{0,1,-}(j), x_2^{0,1,+}(j)]$. By Lemma \ref{lem:aux3}.(iv) we have $x_1^{0,1}(j) \in X_1^{0,\tilde{j}}$ and by Lemma \ref{lem:aux3}.(i) there exist $\tilde{x}_2^-, \tilde{x}_2^+ \in \text{supp}(\pi^{(\tilde{j})}_{x_1^{0,1}(j)})$ such that $\tilde{x}_2^-\le x_2^{0,1,-}(j)$ and $\tilde{x}_2^+ \ge x_2^{0,1,+}(j)$. This leads to a contradiction to minimality of $m_{\tilde{j}}$ at step $\tilde{j}$, as we could replace $x_1^{0,1}(\tilde{j})$ and $x_1^{0,2}(\tilde{j})$ by $x_1^{0,1}(j)$, thus reducing $|T_1^{\tilde{j}}|$ from $m_{\tilde{j}}+2$ to $m_{\tilde{j}}+1$. We thus conclude $[x_2^{0,1-}(j), x_2^{0,1,+}(j)) \subseteq A_1$.\\
The case $(b)$ works similarly, compare Figure \ref{fig:3}.
\begin{figure}
\begin{tikzpicture}[line cap=round,line join=round,x=1cm,y=1cm]
\draw[->,line width=2pt, color=darkspringgreen] (0,0) node[below] {\footnotesize $x_1^+$} -- (1,3)node[above] {\footnotesize $x_2^{+}$};
\draw[-,color=black] (-1.5,0) -- (9,0) node[right]{$\mu$}; 
\draw[-,color=black] (-1.5,3) -- (9,3)  node[right]{$\nu$}; 
\draw[->, line width=2pt, color=darkspringgreen] (1.5,0)  -- (0.25,3) node[above] {\footnotesize $x_2^{0,1,-}$};
\draw[->,dashed, line width=2pt, color=azure] (-1,0) node[below] {\footnotesize $x_1^+(j)$} -- (1,3);
\draw[->,dashed, line width=2pt, color=azure] (2.25,0) -- (0,3);
\draw[->,dashed, line width=2pt, color=red] (2.25,0)  -- (4,3);
\draw[->,dashed, line width=2pt, color=azure] (2.25,0) node[below] {\footnotesize $x_1^{0,1}(j)$} -- (3.75,3);
\draw[->,dashed, line width=2pt, color=red] (4.5,0)  -- (0.5,3);
\draw[->,dashed, line width=2pt, color=azure] (4.5,0)  -- (1.1,3);
\draw[->,dashed, line width=2pt, color=azure] (4.5,0) node[below] {\footnotesize $x_1^{0,2}(j)$} -- (7,3);
\draw[->,dashed, line width=2pt, color=red] (4.5,0)  -- (7.5,3);
\draw[->,line width=2pt, color=darkspringgreen] (1.5,0) node[below] {\footnotesize $x_1^{0,1}$} -- (2.75,3) node[above] {\footnotesize $x_2^{0,1,+}$};
\draw[->,line width=2pt, color=darkspringgreen] (3.25,0) -- (2,3)node[above] {\footnotesize $x_2^{0,2,-}$};
\draw[->,line width=2pt, color=darkspringgreen] (3.25,0) node[below] {\footnotesize $x_1^{0,2}$}-- (4.5,3) node[above]  {\footnotesize $x_2^{0,2,+}$};
\draw[->,line width=2pt, color=darkspringgreen] (5,0)  -- (6.25,3) node[above]  {\footnotesize $x_2^{0,3,+}$};
\draw[->,line width=2pt, color=darkspringgreen] (5,0) -- (3.75,3) node[above]  {\footnotesize $x_2^{0,3,-}$};
\draw[->,line width=2pt, color=darkspringgreen] (6.75,0) node[below] {\footnotesize $x_1^{0,4}$} -- (5.5,3) node[above]  {\footnotesize $x_2^{0,4,+}$};
\draw[->,line width=2pt, color=darkspringgreen] (6.75,0) -- (8,3) node[above]  {\footnotesize $x_2^{0,4,-}$};
\draw[->,line width=2pt, color=darkspringgreen] (8.5,0) node[below] {\footnotesize $x_1^{0,5}$}-- (7.25,3) node[above]  {\footnotesize $x_2^{0,5,-}$};
\end{tikzpicture}
\caption{Case $(b)$: The first elements of $T_2^{\tilde{j}}$ (green) and of $T_2^{j}$ (blue dotted). The red dotted arrows show values which lead to a contradiction to Definition \ref{def:exchange}. }
\label{fig:3}
\end{figure}
We conclude $x_2^{0,1,+}(j) \le x_2^{0,3,-}(\tilde{j})$ (otherwise $x_1^{0,1}(\tilde{j})$ and $x_1^{0,2}(\tilde{j})$ could be replaced by $x_1^{0,1}(j)$, which is an element of $X_1^{0, \tilde{j}}$ by Lemma \ref{lem:aux3}.(iv) as in case (a), and this again contradicts minimality) and then $x_2^{0,2,+}(j) \le x_2^{0,5,-}(\tilde{j})$ (otherwise $x_1^{0,3}(\tilde{j})$ and $x_1^{0,4}(\tilde{j})$ could be replaced by $x_1^{0,2}(j)$, which is an element of $X_1^{0, \tilde{j}}$ by Lemma \ref{lem:aux3}.(iv), and as $x_2^{0,2,-}(j) \ge x_2^{+}(j)\ge x_2^{0,1,-}(\tilde{j})$ by \eqref{eq:oooorrdeeerr!} and the convention $x_2^+(j)\ge x_2^{0,1,-}(\tilde{j})$, this concludes the proof.
\end{proof}

\begin{lemma}\label{lem:rev}
If $x_1=\overleftarrow{x}_1^{j,k}=\overleftarrow{x}_1^{\hat{j},\hat{k}}$ for some $ \hat{j}>j\ge \tilde{j}$ and some $1\le k, \hat{k}\le \tilde{m}$, then $k=\hat{k}$.
\end{lemma}
\begin{proof}
We argue by contradiction and thus assume $k\neq \hat{k}$. For concreteness we assume $\hat{k}<k$; the case $\hat{k}>k$ follows by a very similar reasoning. We claim that 
\begin{align}\label{eq:rev_claim}
A_{\hat{k}}= \left[x_2^{0,4(\hat{k}-1)+1,-}(\tilde{j}), x_2^{0,4\hat{k}+1,-}(\tilde{j})\right) \subseteq [\min(\mathrm{supp}(\pi_{x_1}^{(j)})), \max(\mathrm{supp}(\pi_{x_1}^{(j)})).
\end{align}
To show this we first note that $x_2^{0,i_{j,k}-1,+}(j)\in A_k$ by definition of $\overleftarrow{x}_1^{j,k}$. Furthermore, as $k>1$, $$\text{conv}(\text{supp}(\pi^{(j)}_{x_1}))\cap (\max(X_2^{\tilde{j},+}, \min (X_2^{\tilde{j},-}))\neq \emptyset,$$ which implies $x_1=\overleftarrow{x}_1^{j,k}\in X_1^{\tilde{j},0}$ by Lemma \ref{lem:aux3}.(iv). 
As $x_1=\overleftarrow{x}_1^{j,k}=\overleftarrow{x}_1^{\hat{j},\hat{k}}$, we have by Lemma \ref{lem:aux3}.(i) $$\min(\text{supp}(\pi^{(j)}_{x_1}))\le \min(\text{supp}(\pi^{(\hat{j})}_{x_1}))=x_2^{0,i_{\hat{j},\hat{k}}-1,-}(\hat{j})<\min (A_{\hat{k}})$$ and  $$\max(\text{supp}(\pi^{(j)}_{x_1}))= x_2^{0, i_{j,k}-1,+}(j)>\min(A_k)>\max(A_{\hat{k}}),$$ where the last inequality follows from the definition of $A_k$. This shows \eqref{eq:rev_claim}.\\
We claim that this contradicts minimality of $m_{\tilde{j}}$, by a very similar construction to the proof of Lemma \ref{lem:aux3}.(v). Indeed, define 
\begin{align*}
\hat{T}^{\tilde{j}}_1 &:=(x_1^+(\tilde{j}), x_1^{0,1}(\tilde{j})\dots,x_1^{0,4(\hat{k}-1)}(\tilde{j}), \overleftarrow{x}_1^{\hat{j},\hat{k}}, x_1^{4\hat{k}+1}(\tilde{j}), \dots, x_1^-(\tilde{j}))\\
 \hat{T}^{\tilde{j}}_2 &:=(x_2^+(\tilde{j}), x_2^{0,1,-}(\tilde{j})\dots, x_2^{0,4(\hat{k}-1),+}(\tilde{j}), \min(\text{supp}(\pi_{x_1}^{(\tilde{j})})), \max(\text{supp}(\pi_{x_1}^{(\tilde{j})})), x_2^{0,4\hat{k}+1,-}(\tilde{j}), \dots, x_2^-(\tilde{j})).
\end{align*}
By \eqref{eq:rev_claim}, suitable subtuples of $\hat{T}^{\tilde{j}}_1, \hat{T}^{\tilde{j}}_2$ satisfy \eqref{eq:oooorrdeeerr!}. In particular, by minimality, $m_{\tilde{j}}\le m_{\tilde{j}}-3$, a contradiction.
\end{proof}

\begin{lemma}\label{lem:monotone}
Assume that for $\hat{j}> j\ge\tilde{j}$ we have $x_1=\overleftarrow{x}_1^{j,k}=\overleftarrow{x}_1^{\hat{j},k}$, $x_1=x_1^{0,i}(j)=x_1^{0,\hat{i}}(\hat{j})$ and 
\begin{align}\label{eq:support}
\left((x_1, x_2^{0,i,+}(j)),(x_1,x_2^{0,i+1,-}(j)), (x_1, x_2^{0,\hat{i},+}(\hat{j}))\right)\in \mathrm{supp}(\rho^{\{j,j+1,\hat{j}\}}).
\end{align}
Then 
$x_2^{0,i+1,-}(j)\ge x_2^{0,\hat{i},+}(\hat{j})$.
\end{lemma}
\begin{proof}
If there is no $j<\bar{j}<\hat{j}$ such that $x_1\in T_1^{\bar{j}}, x_2^{0,i+1,-}(j)\in T_2^{\bar{j}}$, then no mass is shifted at $(x_1, x_2^{0,i+1,-}(j))$ in between $j$ and $\hat{j}$, so we have $x_2^{0,i+1,-}(j)= x_2^{0,\hat{i},+}(\hat{j})$ by the assumption \eqref{eq:support}. In this case the conclusion of the lemma is satisfied.\\
We thus consider the case that $x_1\in T_1^{\bar{j}},x_2^{0,i+1,-}(j)\in T_2^{\bar{j}}$ for some $j<\bar{j}<\hat{j}$. We also assume that $\bar{j}$ denotes the smallest such number $\bar{j}$. There are two possibilities: either $x_2^{0,i+1,-}(j)=x_2^{0,\bar{i},-}(\bar{j})$ or  $x_2^{0,i+1,-}(j)=x_2^{0,\bar{i},+}(\bar{j})$ for some $1\le \bar{i}\le m_{\bar{j}}$.\\
 Let us first consider the case $x_2^{0,i+1,-}(j)=x_2^{0,\bar{i},-}(\bar{j})$. As $x_2^{0,i+1,-}(j)\in A_k$, we immediately have that $x_2^{0,\bar{i},-}(\bar{j})=\min(\mathrm{supp}(\pi^{(\bar{j})}_{x_1}))\ge \min(A_k)$, which implies by Lemma \ref{lem:aux3}.(i) that $$\min(\mathrm{supp}(\pi^{(\hat{j})}_{x_1}))\ge \min(\mathrm{supp}(\pi^{(\bar{j})}_{x_1}))\ge \min(A_k).$$ This contradicts the definition of $\overleftarrow{x}_1^{\hat{j},k}$.\\
Let us now consider the case that $x_2^{0,i+1,-}(j)=x_2^{0,\bar{i},+}(\bar{j})$. By definition we have $x_2^{0,\bar{i},+}(\bar{j})=\max(\mathrm{supp}(\pi^{(\bar{j})}_{x_1}))$. Using again  Lemma \ref{lem:aux3}.(i), (ii) this implies $$x_2^{0,\hat{i},+}(\hat{j})=\max(\mathrm{supp}(\pi^{(\hat{j})}_{x_1}))\le \max(\mathrm{supp}(\pi^{(\bar{j})}_{x_1}))=x_2^{0,\bar{i},+}(\bar{j})=x_2^{0,i+1,-}(j),$$
which concludes the proof.
\end{proof}

\subsection{Proof of Theorem \ref{thm:W1} for finitely supported $\pi \in \Pi(\mu, \nu)$ with $\nu\in \mathfrak{P}$}\label{sec:non_compact}
Let  $\mathfrak{P} \subseteq \mathcal{P}_1(\R)$ be uniformly integrable. We have already proved Theorem \ref{thm:W1} for all finitely supported probability measures $\pi\in \Pi(\mu,\nu)$, where the support of $\nu$ is contained in a common compact set $[-\tilde{K}, \tilde{K}]$. We can now extend inequality \eqref{eq:rea} to all finitely supported measures $\pi\in \Pi(\mu,\nu)$, for which $\nu\in \mathfrak{P}$.\\
For this let us recall Definition \ref{definition:rho}, specifically
\begin{align*}
\rho(dz^0,dz^1, dz^2,\dots,dz^N)= \rho_{z^{N-1}}^{(N-1)}(dz^N)\dots \rho^{(1)}_{z^1}(dx^2)\rho^{(0)}(dz^0,dz^1)
\end{align*}
and
\begin{align*}
\rho^{\{0,N\}}(dx, dy) = \int \rho(dx,dz^1, \dots, ,dz^{N-1},dy).
\end{align*}

We are now ready to extend the proof of Theorem \ref{thm:W1} to measures $\pi\in \Pi(\mu,\nu)$ where $\nu\in \mathfrak{P}$.

\begin{proof}[Proof of Theorem \ref{thm:W1} for finitely supported measures with $\nu\in \mathfrak{P}$]
Applying Algorithm \ref{alg:1} as in Section \ref{sec:w1_finite} we obtain a martingale measure $\pi_m\coloneq \pi^{(N)}\in \mathcal{M}(\mu,\nu)$ and a coupling $\rho^{\{0,N\}}\in \Pi(\pi,\pi_m)$. We now show that \eqref{eq:rea} holds for $\pi_m$: indeed, as $\mathfrak{P}$ is uniformly integrable there exists $\tilde{K}=\tilde{K}(\mathfrak{P})>0$ such that $\int_{[-\tilde{K},\tilde{K}]^c} |x_2|\,\nu(dx_2)\le \delta/8$. Next we observe that by the triangle inequality
\begin{align}\label{eq:non-compact1}
|x_2-y_2|\mathds{1}_{\{x_2\notin [-\tilde{K}, \tilde{K}]\}\cup\{ y_2\notin [-\tilde{K}, \tilde{K}]\} } \le 2(|x_2|\mathds{1}_{\{x_2\notin [-\tilde{K}, \tilde{K}] \}}+|y_2|\mathds{1}_{\{y_2\notin [-\tilde{K}, \tilde{K}]\} })
\end{align}
holds for all $x_2, y_2\in \R$, so it holds in particular for all $(x_2,y_2)$ such that $(x_2,y_2)\in \text{supp}(\rho^{\{0,N\}}_{(x_1,x_1)})$ for some $x_1\in \text{supp}(\mu)$. 
We next define for all $0\le j\le N$
\begin{align*}
\tilde{m}_j:=\max \Big\{m \in \N :&\ \exists i \in \{0,\dots, m_j-1\} \\
&\text{ s.t. } [x_2^{0,i+k,-}(j), x_2^{0,i+k,+}(j)]\subseteq [-\tilde{K},\tilde{K}] \text{ for all }k=1, \dots, m  \Big\},
\end{align*}
where $x_2^{0,i+k,-}(j),x_2^{0,i+k,+}(j)$ denote elements of $T_2^j$ as defined in Lemma \ref{lem:aux1}. Let us also make the following important observation, which follow immediately from Definition \ref{def:exchange}:
\begin{enumerate}[(a)]
\item for every $0\le j\le N$ there are at most four distinct $x_1\in T_1^j$ for which simultaneously $\mathrm{supp}(\pi^{(j)}_{x_1})\cap [-\tilde{K},\tilde{K}]\neq \emptyset$ and $\mathrm{supp}(\pi^{(j)}_{x_1})\cap [-\tilde{K},\tilde{K}]^c\neq \emptyset$ (i.e. at most two on each side of the interval).
\end{enumerate}
Let us define
\begin{align*}
B_{\tilde{K}}^j \coloneq \{x_1 \in \mathrm{supp}(\mu) \ | \ \mathrm{supp}(\pi^{(j)}_{x_1})\cap [-\tilde{K},\tilde{K}]\neq \emptyset\}
\end{align*}
for all $0\le j\le N-1.$ We claim the following: if $(x_2,y_2)\in \text{supp}(\rho_{(x_1,x_1)}^{\{0,N\}})$ for some $x_1\in \text{supp}(\mu)$ and $x_2,y_2\in [-\tilde{K},\tilde{K}]$, then $x_1\in B_{\tilde{K}}^j$ for all $j\in \{0,\dots,N\}$. Indeed this can be checked using again Remark \ref{rem:explained2} and Lemma \ref{lem:aux3}.(i)-(ii).\\
We now write
\begin{align*}
W_1^{nd}(\pi, \pi_{m}&)\le \int\int |x_2-y_2|\,\rho^{\{0,N\}}_{(x_1,x_1)}(dx_2,dy_2)\,\mu(dx_1)\\
&\le \int\int_{{\{x_2\notin [-\tilde{K}, \tilde{K}]\} \cup \{y_2\notin [-\tilde{K}, \tilde{K}]\}}} |x_2-y_2|\,\rho^{\{0,N\}}_{(x_1,x_1)}(dx_2,dy_2)\,\mu(dx_1)\\
&+\int\int_{\{x_2\in [-\tilde{K}, \tilde{K}], y_2\in [-\tilde{K}, \tilde{K}]\}} |x_2-y_2|\,\rho^{\{0,N\}}_{(x_1,x_1)}(dx_2,dy_2)\,\mu(dx_1)\\
&\le 2\left(\int_{[-\tilde{K}, \tilde{K}]^c} |x_2| \, \nu(dx_2)+ \int_{[-\tilde{K}, \tilde{K}]^c} |y_2|\, \nu(dy_2)\right)\\
&+\int_{x_1\in B_{\tilde{K}}^0\cap B_{\tilde{K}}^N} \int_{[-\tilde{K},\tilde{K}]^2} |x_2-y_2|\,\rho^{\{0,N\}}_{(x_1,x_1)}(dx_2,dy_2)\,\mu(dx_1).
\end{align*}
Next, applying the triangle inequality to $|x_2-y_2|$ we have 
\begin{align*}
&\int_{x_1\in B_{\tilde{K}}^0\cap B_{\tilde{K}}^N} \int_{[-\tilde{K},\tilde{K}]^2} |x_2-y_2|\,\rho^{\{0,N\}}_{(x_1,x_1)}(dx_2,dy_2)\,\mu(dx_1)\\
&\le \sum_{j=0}^{N-1} \int_{\{x_1\in B_{\tilde{K}}^j\}}\int |x_2-y_2|\,\rho^{(j)}_{(x_1,x_1)}(dx_2,dy_2)\,\mu(dx_1)\\
&\stackrel{(a)}{\le} \sum_{j=0}^{N-1} 2(\tilde{m_j}+4)\lambda^{(j)}
\end{align*}
by the definitions of $\rho^{\{0,N\}}$ and $\tilde{m}_j $ combined  with a computation similar to the proof of (iii) of Lemma \ref{lem:aux}. Now we follow exactly the same arguments as in the proof of Lemma \ref{lem:aux4} with $\delta$ replaced by $\delta/2$ and $m_j$ replaced by $\tilde{m}_j$ to obtain
\begin{align*}
\sum_{j=0}^{N-1} 2(\tilde{m_j}+4)\lambda^{(j)}&\le \delta/2+\sum_{j=0}^{N-1} (K(\delta/2,\tilde{K})+4)\Bigg(\left|\int (x_2-x_1)\,\pi^{(j)}_{x_1}(dx_2) \right|\,\mu(dx_1) \\
&\quad -\int \left|\int (x_2-x_1)\,\pi^{(j+1)}_{x_1}(dx_2) \right|\,\mu(dx_1)\Bigg)\nonumber\\
&\le \delta/2+(K(\delta/2,\tilde{K})+4)\epsilon_{\pi}.
\end{align*}
Combining the estimates above we finally obtain
\begin{align*}
W_1^{nd}(\pi, \pi_{m}&)\le \int\int |x_2-y_2|\,\rho^{\{0,N\}}_{(x_1,x_1)}(dx_2,dy_2)\,\mu(dx_1)\\
&\le 2\left(\int_{[-\tilde{K}, \tilde{K}]^c} |x_2| \, \nu(dx_2)+ \int_{[-\tilde{K}, \tilde{K}]^c} |y_2|\, \nu(dy_2)\right)\\
&+\int_{\{x_1\in B_{\tilde{K}}^0\cap B_{\tilde{K}}^N\}} \int_{[-\tilde{K}, \tilde{K}]^2} |x_2-y_2|\,\rho^{\{0,N\}}_{(x_1,x_1)}(dx_2,dy_2)\,\mu(dx_1)\\
&\le \delta/2+\delta/2+(K(\delta/2, \tilde{K})+4)\epsilon_{\pi}\\
&\le \delta+(K(\delta/2, \tilde{K})+4)\epsilon_{\pi}.
\end{align*}
The claim follows by setting $K(\delta, \mathfrak{P}):= K(\delta/2, \tilde{K})+4$.
\end{proof}

\subsection{Proof of Theorem \ref{thm:W1} for general $\pi\in \Pi(\mu,\nu)$}
Lastly we give the proof of Theorem \ref{thm:W1} for general $\pi\in \Pi(\mu,\nu)$. This follows the arguments given in the proof of Proposition \ref{thm:dispersion} in Section \ref{sec:dispersion_general} very closely.

\begin{proof}[Proof of Theorem \ref{thm:W1} for general $\pi\in \Pi(\mu,\nu)$]
Fix $\delta>0$. By Lemma \ref{cor:approx} for $\kappa=1/n$ there exists a sequence of finitely supported measures $(\pi^n)_{n\in \N}$ such that $\pi^n\in \Pi(\mu^n,\nu^n)$, where $\mu^n\preceq_c\nu^n$ and 
\begin{align*}
W_1^{nd}\left(\pi^n,\pi\right)\le 1/n
\end{align*}
for all $n\in \N$.
In particular we have $W_1(\nu^n,\nu)\le 1/n$ for all $n\in \N$ and thus it follows from uniform integrability of $\mathfrak{P}$ and standard approximation by Lipschitz indicators that we can find $\tilde{K}>0$ such that for all $n$ large enough we have $\sup_{\nu \in \mathfrak{P}}\int_{[-\tilde{K},\tilde{K}]^c} |x_2|\,\nu^n(dx_2)\le \delta/8$. By the proof of Theorem \ref{thm:W1} for finitely supported measures we can thus find a sequence of measures $(\pi^n_m)_{n\in \N}$ such that $\pi^n_m \in \mathcal{M}(\mu^n, \nu^n)$ and
\begin{align}\label{eq:fhh2}
W_1^{nd}(\pi^n_m,\pi^n)&\le \int W_1(\pi^n_{m,x_1}, \pi^n_{x_1})\,\mu^n(dx_1)\le \delta + K(\delta, \mathfrak{P})\epsilon_{\pi^n}
\end{align}
for all $n \in \N$. We now apply the same arguments as in the proof of Proposition \ref{thm:dispersion} for general $\pi\in \Pi(\mu,\nu)$ with $\pi^n_{mr}$ replaced by $\pi^n_m$. This proof can be found in Section \ref{sec:dispersion_general}, so we only paraphrase the remaining steps here: extending the disintegrations in the same way we can still define $\bar{\pi}^n:= \mu\otimes\pi_{x_1}^n$ as well as $\bar{\pi}_{m}^n:= \mu\otimes\pi_{m,x_1}^n$ and note that $\lim_{n\to \infty} W_1^{nd}(\pi^n, \bar{\pi}^n)=0$ as well as $\lim_{n\to \infty} W_1^{nd}(\pi^n_{m}, \bar{\pi}_{m}^n)=0$. Next, 
\begin{align}\label{eq:fhh1}
\begin{split}
W_1^{nd}(\bar{\pi}_{m}^n,\bar{\pi}^n)&\le \int W_1(\bar{\pi}_{m,x_1}^n, \bar{\pi}^n_{x_1})\,\mu(dx_1)=\int W_1(\pi_{m,x_1}^n, \pi^n_{x_1})\,\mu^n(dx_1)\\
&\le \delta + K(\delta, \mathfrak{P})\epsilon_{\pi^n}
\end{split}
\end{align}
and
\begin{align}\label{eq:lowerbound2}
\int (x_2-f^{1/n}(x_1))\,\pi^n_{m,x_1}(dx_2)=0
\end{align}
hold. We now use the same precompactness results as in the proof of Proposition \ref{thm:dispersion} to obtain a disintegration $x_1\mapsto \bar{\pi}_{m,x_1}$  such that (after taking a subsequence without relabelling) the measures $$\left(\frac{1}{n} \sum_{i=1}^n \bar{\pi}_{m, x_1}^i\right)_{n\in \N}$$ converge weakly to $\bar{\pi}_{m,x_1}$ for $\mu$-a.e. $x_1\in \R$. We set $\pi_{m} \coloneq \mu \otimes \bar{\pi}_{m,x_1}$ and again conclude that $\lim_{n\to \infty}W^{nd}_p(\frac{1}{n} \sum_{i=1}^n \bar{\pi}_{m}^i,\pi_{m})=0$.\\
The proof that $\pi_{m}$ is actually a martingale measure is also analogous to the one given in the proof of Proposition \ref{thm:dispersion}: indeed we again have
\begin{align*}
\int \left(  \int (x_2-x_1)\, \left(\frac{1}{n}\sum_{i=1}^n\pi^{i}_{m}\right)_{ x_1}(dx_2) \right)^- \left( \frac{1}{n}\sum_{i=1}^n\pi^i_{m}\right)^{1}(dx_1)\le \epsilon.
\end{align*}
As $\epsilon>0$ was arbitrary, an application of Lemma \ref{lem:aap2} then shows $\pi_{mr} \in \mathcal{M}(\mu,\nu)$.\\
The arguments to show that
\begin{align*}
W_1^{nd}\left(\pi_{mr},\pi\right)&\le K(\delta,\mathfrak{P})\epsilon_\pi+\delta.
\end{align*}
remain unchanged. This concludes the proof.
\end{proof}

\section{Proofs of remaining results in Section \ref{sec:main}}\label{sec:proofs}
\begin{proof}[Proof of Lemma \ref{lem:upperbound}]
We observe that for an arbitrary $\tilde{\pi}\in \mathcal{M}(\mu,\nu)$
\begin{align*}
W_1^{nd}(\pi,\tilde{\pi})&=\inf_{\gamma^1\in \Pi(\mu,\mu)} \Bigg(\int|x_1-y_1|\,\gamma^1(dx_1,y_1) \\\
&\qquad\qquad+\int \inf_{\gamma^2\in \Pi(\pi_{x_1},\tilde{\pi}_{y_1})}\int |x_2-y_2|\,\gamma^2(dx_2, dy_2)\, \gamma^1(dx_1,dy_1)\Bigg) \nonumber\\
&\ge  \inf_{\gamma^1\in \Pi(\mu,\mu)}\Bigg(\int|x_1-y_1|\, \gamma^1(dx_1,dy_1)\\
&\qquad\qquad+ \int \inf_{\gamma^2\in \Pi(\pi_{x_1},\tilde{\pi}_{y_1})}\left| \int (x_2-y_2)\,\gamma^2(dx_2, dy_2)\right| \gamma^1(dx_1,dy_1)\Bigg) \nonumber\\
&=\inf_{\gamma^1\in \Pi(\mu,\mu)}\Bigg(\int|x_1-y_1|\, \gamma^1(dx_1,dy_1)\\
&\qquad\qquad+ \int \inf_{\gamma^2\in \Pi(\pi_{x_1},\tilde{\pi}_{y_1})}\left| \int (x_2-y_1)\,\gamma^2(dx_2, dy_2)\right| \gamma^1(dx_1,dy_1)\Bigg) \nonumber\\
&=\inf_{\gamma^1\in \Pi(\mu,\mu)}\Bigg(\int|x_1-y_1|\, \gamma^1(dx_1,dy_1)+ \int \inf_{\gamma^2\in \Pi(\pi_{x_1},\tilde{\pi}_{y_1})}\Bigg| \int x_2\,\gamma^2(dx_2, dy_2)\\
&\qquad\qquad -x_1+x_1-y_1\Bigg| \gamma^1(dx_1,dy_1)\Bigg) \nonumber\\
&\ge \int \left| \int (x_2-x_1)\,\pi_{x_1}(dx_2)\right|\mu(dx_1)=\epsilon_{\pi}\nonumber
\end{align*}
holds by an application of Jensen's inequality and reverse triangle inequality. This shows the claim.
\end{proof}

\begin{proof}[Proof of Lemma \ref{lem:fh}]
Assume towards a contradiction that there exists $x_0\in \R$ such that $$ \int_{\{x_1 \ge x_0\}} (x_2-x_1) \, \pi_{HF}(dx_1, dx_2)<0.$$ Noting that $$x \mapsto \int_{\{x_1 \ge x\}} (x_2-x_1)\, \pi_{HF}(dx_1, dx_2)$$ is left-continuous and non-increasing on $\{x\in \R \ : \ \inf(\text{supp}(\pi_{HF,x})) \ge x\}$ we conclude that $$\mu\left(\left\{x \in \R \ : \  \int_{\{x_1 \ge x\}} (x_2-x_1) \, \pi_{HF}(dx_1, dx_2)<0,  \inf(\text{supp}(\pi_{HF,x}))<x\right\}\right)>0.$$ Consequently we can choose
$$ x^\ast \in \left\{x \in \Gamma^1 \ : \  \int_{\{x_1 \ge x\}} (x_2-x_1) \, \pi_{HF}(dx_1, dx_2)<0,  \inf(\text{supp}(\pi_{HF,x}))<x, \pi_{HF,x}(\Gamma_{x})=1\right\} \neq \emptyset,$$
where $\Gamma^1$ is the projection of $\Gamma$ onto the first coordinate and $\Gamma_{x_1}$ is the $x_1$-section of $\Gamma$.
We set $\hat{x}=\inf(\text{supp}(\pi_{HF,x^\ast}))$. For the convex function $x \mapsto (x-\hat{x})^+$ we then have 
\begin{align*}
\int (x_1-\hat{x})^+ \, \mu(dx_1)&\ge \int_{\{x_1\ge x^\ast\}} (x_1-\hat{x})  \, \mu(dx_1) > \int_{\{x_1\ge x^\ast\}} (x_2-\hat{x})\, \pi_{HF}(dx_1,dx_2)\\
&=\int_{\{x_1\ge x^\ast\}\cap \Gamma} (x_2-\hat{x})\, \pi_{HF}(dx_1,dx_2)= \int_{\{x_2\ge \hat{x}\}\cap \Gamma} (x_2-\hat{x})\, \pi_{HF}(dx_1,dx_2)\\
&=\int_{\{x_2\ge \hat{x}\}} (x_2-\hat{x})\, \nu(dx_2)=\int (x_2-\hat{x})^+ \, \nu(dx_2),
\end{align*}
where we used the definition of $x^\ast$ for the first and second inequality and \eqref{eq:fh_montonone} for the equality in the second line. This is a contradiction to $\mu \preceq_c \nu$ and thus proves the claim.
\end{proof}

\begin{proof}[Proof of Corollary \ref{cor:special}]
As every $\pi \in \mathcal{M}(\mu, \nu)$ satisfies $\int (x_2-x_1)\, \pi_{x_1}(dx_2)=0$ $\mu$-a.s., it clearly fulfils the barycentre dispersion assumption \ref{def:dispersion} and we have
\begin{align*}
&\inf_{\pi\in \Pi(\mu,\nu),\ \pi\ \mathrm{satisfies\ Ass.\  \ref{def:dispersion}}} \left( \int c(x_1,x_2)\,\pi(dx_1,dx_2) +
L\int \left|\int (x_2-x_1)\,\pi_{x_1}(dx_2)\right|\,\mu(dx_1) \right) \\
&\le C(\mu, \nu).
\end{align*}
Now take any $\pi\in \Pi(\mu, \nu)$ satisfying the barycentre dispersion assumption \ref{def:dispersion} and any $\tilde{\pi} \in \mathcal{M}(\mu, \nu)$. Then 
\begin{align}\label{eq:proof}
\int c(x_1, x_2)\, \tilde{\pi}(dx_1, dx_2) &\le \int c(x_1, x_2)\,\pi(dx_1, dx_2)+ \Big( \int c(x_1, x_2)\, \tilde{\pi}(dx_1, dx_2)\\
&- \int c(x_1, x_2)\, \pi(dx_1, dx_2)\Big)\nonumber \\
&\le \int c(x_1, x_2)\,\pi(dx_1, dx_2)+ LW_1^{nd}(\pi, \tilde{\pi})
\end{align}
as $c$ is $L$-Lipschitz-continuous. Taking the infimum over $\tilde{\pi} \in \mathcal{M}(\mu, \nu)$ in \eqref{eq:proof} and using Proposition \ref{thm:dispersion} we conclude that 
\begin{align*}
C(\mu, \nu) &\le \int c(x_1, x_2)\, \pi(dx_1, dx_2)+ L\inf_{\tilde{\pi} \in \mathcal{M}(\mu, \nu)} W^{nd}_1(\pi, \tilde{\pi}) \\
&= \int c(x_1, x_2)\, \pi(dx_1, dx_2)+ L\int \left|\int (x_2-x_1)\,\pi_{x_1}(dx_2)\right|\,\mu(dx_1) .
\end{align*}
Taking the infimum over $\pi \in \Pi(\mu, \nu)$ satisfying Assumption \ref{def:dispersion} concludes the proof.
\end{proof}

\begin{proof}[Proof of Remark \ref{rem:antitone}]
Let us first assume that $\pi_{AT}\in \Pi(\mu,\nu)$ is finitely supported. We note that the barycentre dispersion assumption is not satisfied in general for $\pi_{AT}$, but we may apply Algorithm \ref{alg:1}. Particular care has to be taken if there exists $x_1^\ast \in \text{supp}(\mu)$ such that $x_1^\ast \in \text{conv}(\text{supp}(\pi_{x_1^\ast}))$. By \eqref{eq:antitone}, there exists at most one such $x_1^\ast$. We remark that this in particular includes the case that $X_1^{0,0}=\{x_1^\ast\}$. In the case that $x_1^\ast\notin X_1^{0,0}$ we now first apply Algorithm \ref{alg:2} to $x_1^\ast$ until $x_1^\ast \in X_1^{j,0}$ for some $j\in \N$, in such a way that $(\min(X_2^{j,0}),\max(X_2^{j,0}))\cap \text{supp}(\nu)=\bigcup_{x_1\in X_1^{j,0}}\text{supp}(\pi^{(j)}_{x_1})$. This can always be achieved by exchanging mass in the direct (left or right) neighbourhood of $\pi_{x_1^\ast}$, as \eqref{eq:antitone} holds for $\pi$. For the rest of the iterations we now leave $\{\pi^{(j)}_{x_1} \ : \ x_1\in X_1^{j,0}\}$ unchanged. Formally this can be achieved by following \cite[proof of Lemma 2.8]{Beiglbock:2016kt}: let us define the sub-probability measure $\pi^\ast$ via $\pi^{\ast}(A):=\pi^{(j)}(A)-\pi^{(j)}(A\cap (\{X_1^{j,0}\}\times \R))$ for all Borel sets $A$. We call its marginals $\mu^\ast$ and $\nu^\ast$. It remains to check that these are still in convex order: take any convex function $\varphi:\R\to \R$. Then $\varphi$ is dominated by a convex function $\psi$ which is linear on $(\min(X_2^{j,0}),\max(X_2^{j,0}))$ and agrees with $\varphi$ on $\R\setminus (\min(X_2^{j,0}),\max(X_2^{j,0}))$. Then $\int_{X_1^{j,0}} \int \psi(x_2) \pi^{(j)}_{x_1}(dx_2)\,\mu(dx_1)= \int_{X_1^{j,0}} \psi(x_1)\, \mu(dx_1)$ and
\begin{align*}
\int \varphi(x_1)\,\mu^\ast(dx_1)& \le\int \psi(x_1)\,\mu^\ast(dx_1)\\
& = \int \psi(x_1) \,\mu(dx_1) -\int_{X_1^{j,0}} \psi(x_1)\,\mu(dx_1) \\
&\le  \int \psi(x_2) \,\nu(dx_2)-\int_{X_1^{j,0}} \psi(x_1)\,\mu(dx_1) \\
&=  \int \psi(x_2)\,\nu(dx_2) -\int_{X_1^{j,0}} \int \psi(x_2)\,\pi^{(j)}_{x_1}(dx_2)\,\mu(dx_1) \\
&= \int \psi(x_2)\,\nu^\ast(dx_2)=\int \varphi(x_2)\,\nu^\ast(dx_2),
\end{align*}
thus $\mu^\ast \preceq_c \nu^\ast$ follows.
We now apply Algorithm \ref{alg:1} to $\pi^{\ast}$ to obtain a sub-probability measure $\pi^{(\ast, N)}$ with marginals $\mu^\ast$ and $\nu^\ast$ satisfying the martingale condition $\int (x_2-x_1)\,\pi^{(\ast, N)}_{x_1}=0$ for all $x_1\in \text{supp}(\mu)\setminus X_1^{j,0}$. We  denote the (bicausal) coupling between $\pi$ and $\pi^{(N)}$ defined via $\pi^{(N)}(A):=\pi^{(\ast,N)}(A)+\pi^{(j)}(A\cap (\{X_1^{j,0}\}\times \R))$ for all Borel sets $A$ by $\rho^{\{0,N\}}$. By construction of $\pi^{(j)}$ and the properties of Algorithm \ref{alg:1}, in particular item (i) of Lemma \ref{lem:aux3}, we note that for $(x, y) \in \text{supp}(\rho^{\{0,N\}})$ we have
\begin{align*}
&x_2 \ge y_2 \text{ if } x_1\in X_1^{0,+},\\
&x_2 \le y_2 \text{ if } x_1\in X_1^{0,-},\\
&x_2 = y_2 \text{ if } x_1\in X_1^{0,0}.
\end{align*}
Thus in particular
\begin{align*}
\begin{split}
W_1^{nd}(\pi^{(0)}, \pi^{(N)})&\le \int \int  |x_2-y_2|\, \rho^{\{0,N\}}_{(x_1, x_1)}(dx_2, dy_2) \,\mu(dx_1) \\
&=\int_{x_1\in X_1^{0,+}} \int (x_2-y_2)\, \rho^{\{0,N\}}_{(x_1, x_1)}(dx_2, dy_2) \,\mu(dx_1) \\
&\qquad+\int_{x_1\in X_1^{0,-}} \int (y_2-x_2)\, \rho^{\{0,N\}}_{(x_1, x_1)}(dx_2, dy_2) \,\mu(dx_1) \\
&= \int \left| \int (x_2-x_1) \,\pi_{x_1}(dx_2)\right|\,\mu(dx_1).
\end{split}
\end{align*}
 This finishes the proof for the finitely supported case.
\end{proof}

\begin{proof}[Proof of Corollary \ref{cor:jourdain}]
Fix $\delta>0$ and $\nu\in \mathcal{P}_1(\R)$. As $\mathfrak{P}=\{\nu\}$ is uniformly integrable, we can apply Theorem \ref{thm:W1} to obtain a constant $K(\delta,\nu)$ such that we have
\begin{align}\label{eq:jou}
\inf_{\tilde{\pi}\in \mathcal{M}(\mu,\nu)} W_1^{nd}(\pi,\tilde{\pi})\le K(\delta,\nu)\epsilon_{\pi}+\delta
\end{align}
for all $\pi\in \Pi(\mu,\nu)$ where $\mu \in \mathcal{P}(\R)$ with $\mu\preceq_c\nu$. As $\epsilon_{\pi}=0$ for all $\pi \in \mathcal{M}(\mu,\nu)$ the first inequality in Corollary \ref{cor:jourdain} is trivial. Now take any $\pi\in \Pi(\mu, \nu)$ and any $\tilde{\pi} \in \mathcal{M}(\mu, \nu)$. Then as in the proof of Corollary \ref{cor:special}
\begin{align}\label{eq:proof2}
\int c(x_1, x_2)\, \tilde{\pi}(dx_1, dx_2) &\le \int c(x_1, x_2)\,\pi(dx_1, dx_2)+ \Big( \int c(x_1, x_2)\, \tilde{\pi}(dx_1, dx_2)\\
&- \int c(x_1, x_2)\, \pi(dx_1, dx_2)\Big)\nonumber \\
&\le \int c(x_1, x_2)\,\pi(dx_1, dx_2)+ LW_1^{nd}(\pi, \tilde{\pi})
\end{align}
as $c$ is $L$-Lipschitz-continuous. Taking the infimum over $\tilde{\pi} \in \mathcal{M}(\mu, \nu)$ in \eqref{eq:proof2} and using \eqref{eq:jou} we conclude that 
\begin{align*}
C(\mu, \nu) &\le \int c(x_1, x_2)\, \pi(dx_1, dx_2)+ L\inf_{\tilde{\pi} \in \mathcal{M}(\mu, \nu)} W^{nd}_1(\pi, \tilde{\pi}) \\
&= \int c(x_1, x_2)\, \pi(dx_1, dx_2)+ LK(\delta,\nu)\epsilon_{\pi}+L\delta.
\end{align*}
Taking the infimum over $\pi \in \Pi(\mu, \nu)$ concludes the proof.
\end{proof}

\begin{proof}[Proof of Theorem \ref{thm:approx_simple}]
For all $n\in \N$ we take $\pi^n \in \mathcal{M}(\mu^n,\nu^n)$ such that 
\begin{align*}
\inf_{\pi \in \mathcal{M}(\mu^n,\nu^n)} &\left( \int c(x_1,x_2) \, \pi(dx)\right)\ge  \int  c(x_1,x_2) \, \pi^n(dx)-1/n
\end{align*}
and note that (possibly after taking a subsequence) there exists $\tilde{\pi} \in \mathcal{M}(\mu,\nu)$ such that $\lim_{n\to \infty}W_p(\pi^n,\tilde{\pi})=0$. Then
\begin{align*}
\liminf_{n\to \infty}\inf_{\pi \in \mathcal{M}(\mu^n,\nu^n)} \int c(x_1,x_2)\,\pi(dx_1,dx_2) &\ge\liminf_{n\to \infty}\left( \int c(x_1,x_2)\,\pi^n(dx_1,dx_2) -1/n\right) \\
&=\int c(x_1,x_2)\,\tilde{\pi}(dx_1,dx_2)\\
&\ge  \inf_{\pi \in \mathcal{M}(\mu,\nu)}\int c(x_1,x_2)\,\pi(dx_1,dx_2).
\end{align*}
For the converse inequality we note that for all $n\in \N$ there exists $\pi^n \in \mathcal{M}(\mu,\nu)$ such that
\begin{align*}
&\inf_{\pi\in \mathcal{M}(\mu,\nu)} \int  c(x_1,x_2) \, \pi(dx_1,dx_2)\ge \int  c(x_1,x_2) \,\pi^n(dx_1,dx_2) -1/n.
\end{align*}
We now apply Lemma \ref{lemma:approx2a} to conclude that for every $n\in \N$ there exists a coupling $\tilde{\pi}^n\in \Pi(\mu^n,\nu^n)$ such that $W_p(\pi^n, \tilde{\pi}^n)\le W_p(\mu, \mu^n)+W_p(\nu, \nu^n)$ and
\begin{align*}
\int \left|\int (x_2-x_1)\, \tilde{\pi}^n_{x_1}(dx_2) \right|\,\mu^n(dx_1)\le W_p(\mu, \mu^n)+W_p(\nu, \nu^n).
\end{align*}
Note that as $\lim_{n\to \infty} W_p(\nu_n,\nu)=0$ the sequence $\{\nu_n\}_{n\in \N}$ is in particular uniformly integrable. By Theorem \ref{thm:W1} there exists a sequence $\pi^n_{m}\in \mathcal{M}(\mu^n,\nu^n)$ such that $\lim_{n\to \infty}W^{nd}_1(\tilde{\pi}^n, \pi^n_{m})=0$. Recall that the function $c:\R\times \R\to \R$ satisfies $|c(x_1,x_2)|\le C(1+|x_1|^p)+|x_2|^p)$. Then using Lemma \ref{lemma:uniform_integrability}
\begin{align*}
\inf_{\pi\in \mathcal{M}(\mu,\nu)}  \int  c(x_1,x_2) \, \pi(dx_1,dx_2)&\ge \limsup_{n \to \infty} \int c(x_1,x_2) \, \pi^n(dx_1,dx_2) \\
&\ge \limsup_{n \to \infty}  \int  c(x_1,x_2) \, \tilde{\pi}^n(dx_1,dx_2)\\
&\ge \limsup_{n \to \infty}  \ \int c(x_1,x_2) \,\pi_{m}^n(dx_1,dx_2)\\
&\ge \limsup_{n \to \infty} \inf_{\pi\in \mathcal{M}(\mu^n,\nu^n)}\int c(x_1,x_2)\, \pi(dx_1,dx_2).
\end{align*}
This concludes the proof.
\end{proof}

\begin{proof}[Proof of Theorem \ref{thm:monotone}]
We only show sufficiency here. For a proof of necessity see e.g. \cite{Beiglbock:2019ufa}.
Let us assume that $\pi\in \mathcal{M}(\mu,\nu)$ is not an optimiser of \eqref{eq:mot}. We denote 
\begin{align*}
\delta&\coloneq
 \int c(x_1,x_2)\, \pi(dx_1,dx_2)-\inf_{\hat{\pi}\in \mathcal{M}(\mu,\nu)} \int c(x_1,x_2) \, \hat{\pi}(dx_1,dx_2)>0.
\end{align*}
Let $\Gamma\subseteq \R^2$ be a Borel set such that $\pi(\Gamma)=1$. By Lemma \ref{lemma:approx1} there exists a sequence of measures $(\pi^n)_{n\in \N}$, such that for each $n\in \N$ $\pi^n$ is finitely supported on $\Gamma$, $\pi^n \in \mathcal{M}(\mu^n,\nu^n)$ for some sequences of measures $(\mu^n)_{n\in \N}$ and $(\nu^n)_{n\in \N}$ and $\lim_{n\to \infty} W^{nd}_p(\pi^n,\pi)=0$. Clearly  $\mu^n \preceq_c \nu^n$ and $$\lim_{n\to \infty} W_p(\mu^n,\mu)=\lim_{n\to \infty} W_p(\nu^n,\nu)=0.$$ By Theorem \ref{thm:approx_simple} we also have
\begin{align*}
\lim_{n\to \infty} \inf_{\pi \in \mathcal{M}(\mu^n,\nu^n)} \int c(x,y)\,\pi(dx,dy)=\inf_{\pi \in \mathcal{M}(\mu,\nu)} \int c(x,y)\,\pi(dx,dy),
\end{align*}
in particular there exists $n_0\in \N$ such that for all $n\ge n_0$
\begin{align*}
 \inf_{\pi \in \mathcal{M}(\mu^n,\nu^n)} \int c(x_1,x_2)\,\pi(dx_1,dx_2)\le \int c(x_1,x_2) \,\pi^n(dx_1,dx_2)-2\delta/3.
\end{align*}
There exists a measure $\pi'\in \mathcal{M}(\mu^n,\nu^n)$ such that 
\begin{align*}
\int c(x_1,x_2)\,\pi'(dx_1,dx_2)- \inf_{\pi \in \mathcal{M}(\mu^n,\nu^n)} \int c(x_1,x_2)\,\pi(dx_1,dx_2)\le \delta/3.
\end{align*}
In particular $\pi'$ is a competitor of $\pi^n$ and
\begin{align*}
\int c(x_1,x_2)\, \pi'(dx_1,dx_2) \le \int c(x_1,x_2)\, \pi^n(dx_1,dx_2)-\delta/3,
\end{align*} 
showing that $\Gamma$ is not finitely optimal.
\end{proof}

\begin{appendix}
\section{Proofs of approximation results}

Let us first recall the following result:

\begin{lemma}[General Tchakaloff's theorem, cf. {\cite[Corollary 2]{bayer2006proof}}]\label{lem:tchakaloff}
Let $\mu\in \mathcal{P}(\R)$, $m\in \N$ and $f:\R\to \R^m$ such that $\int |f(x)|\,\mu(dx)<\infty$. Then there exists a probability measure $\tilde{\mu}\in \mathcal{P}(\R)$ with finite support such that $\mathrm{supp}(\tilde{\mu})\subseteq \mathrm{supp}(\mu)$ and $\int f(x)\,\tilde{\mu}(dx)=\int f(x)\,\mu(dx)$.
\end{lemma}

\begin{proof}[Proof of Lemma \ref{lemma:approx1}]
Throughout the proof we make the convention that $x/0:=0$ for any $x\in \R$.
We prove (i) via two discretisations: first we approximate the  marginal $\mu$ and consecutively we approximate the disintegration $(\pi_{x_1})_{x_1\in \R}$.\\\\\
For the first approximation, note that by \cite[Theorem 1.1.8]{strvar}  the property $\pi(\Gamma)=1$ implies $$\mu\left(\left\{x_1\in \R \ : \ \pi_{x_1}(\Gamma_{x_1}\}=1\right\}\right)=1,$$ where $\Gamma_{x_1}$ denotes the $x_1$-section of $\Gamma$. Without loss of generality we thus assume that $\pi_{x_1}(\Gamma_{x_1})=1$ for all $x_1\in \Gamma^{1}$, where we recall that $\Gamma^{1}$ denotes the projection of $\Gamma$ to the first coordinate. We can further assume without loss of generality that $\Gamma^1$ is not finite and fix some $0\neq a_0\in \Gamma^1$ satisfying $ \int |x_2|^p \,\pi_{a_0}(dx_2)<\infty$ (see \cite[Cor. 1.1.7]{strvar}). Choose $c_p\ge 1$ such that $(x+y)^p\le c_p(x^p+y^p)$ for all $x,y \in \R$.
As $\int |x_1|^p\,\mu(dx_1)<\infty$, $\int |x_2|^p\,\nu(dx_2)<\infty$ and $x_1\mapsto \pi_{x_1}$ is Borel, an application of Lusin's theorem (see \cite[Theorem 7.1.12]{Bogachev:2007gk}) to the measure $\zeta$ defined via $$\zeta(A)\coloneq\frac{1}{3}\left(\frac{\int_{A} |x_1|^p\,\mu(dx_1)}{\int |x_1|^p\,\mu(dx_1)}+\frac{\int_A\int |x_2|^p \, \pi_{x_1}(dx_2)\,\mu(dx_1)}{\int |x_2|^p\,\nu(dx_2)}+\mu(A)\right)$$ for every Borel set $A\subseteq \R$, there exists a compact set $K_1\subseteq \Gamma^1$ such that $a_0\in K_1$,
\begin{align*}
&\int_{K_1^c} |x_1|^p  \mu(dx_1)\le \kappa^p/(6c_p),\quad
\int_{K_1^c}\int |x_2|^p \, \pi_{x_1}(dx_2)\mu(dx_1)\le \kappa^p/(6c_p),\\
&\mu(K_1^c)\le \frac{ \kappa^p/(6c_p)}{|a_0|^p\vee \int |x_2|^p \, \pi_{a_0}(dx_2)}
\end{align*}  and $x_1\mapsto \pi_{x_1}$ is continuous in $W_p$ on $K_1$. As $K_1$ is compact, $x_1\mapsto \pi_{x_1}$ is uniformly continuous on $K_1$. Thus there exists $\delta >0$ such that $W_p(\pi_{x_1}, \pi_{y_1})\le \kappa/6$ for all $x_1, y_1\in K_1$ with $|x_1-y_1|\le \delta$ and a finite set $K_{1,\kappa}=\{a_0,a_1, \dots, a_N\}$ of $K_1$ such that $a_1< \dots< a_N$ and $$\min_{a\in K_{1,\kappa},\ a\le y_1}(y_1-a)\le \kappa/6\wedge \delta$$ for all $y_1\in K_1$,  where $a_1$ and $a_N$ are the left and right end-points of $K_1$. Let $(P_i)_{i=0}^{N}$ be a disjoint partition of $K_1$ given by
\begin{align*}
P_i=\left\{ y_1\in K_1:\ a_i= \mbox{argmin}_{a\in K_{1,\kappa},\ a\le y_1} (y_1-a) \right\}, \qquad i=0, \dots, N.
\end{align*}
We now define 
\begin{align}\label{eq:fkappa}
f^{\kappa}(x_1)=a_0\mathds{1}_{K_1^c}(x_1)+\sum_{i=0}^{N} a_i\mathds{1}_{P_i}(x_1)
\end{align}
and set $\tilde{\pi}=(f^{\kappa}_\ast \pi^1) \otimes \pi_{x_1}$, where we recall that $(f_\ast \pi^1)$ denotes the push-forward of $\pi^1$ by the function $f^\kappa$. To estimate $W_1^{nd}(\pi, \tilde{\pi})$ we set $\gamma^1=(x, f^\kappa(x))_\ast \pi^1$ and note that $\gamma^1\in \Pi(\pi^1, \tilde{\pi}^1)$. We define  $\gamma^2\in \Pi(\pi_{x_1}, \tilde{\pi}_{y_1})$ as a coupling which attains $W_p(\pi_{x_1},\tilde{\pi}_{y_1})$. Then by the triangle inequality and noting that $\pi^1(\Gamma^1)=1$
\begin{align}\label{eq:long1}
W_p^{nd}(\pi,\tilde{\pi})&\le \left(\int |x_1-y_1|^p + \int |x_2-y_2|^p\,\gamma^2(dx_2,dy_2)\gamma^1(dx_1,dy_1) \right)^{1/p}\nonumber\\
&\le \bigg(c_p \int_{K_1^c} \left(|x_1|^p+|f^\kappa(x_1)|^p +\int |x_2|^p +|y_2|^p\,\gamma^2(dx_2,dy_2)\right)\,\pi^1(dx_1)\nonumber \\
&\quad+ \int_{K_1} \left(|x_1-f^\kappa(x_1)|^p +\int |x_2-y_2|^p\,\gamma^2(dx_2,dy_2)\right)\,\pi^1(dx_1) \bigg)^{1/p}\nonumber\\
&\le \bigg(c_p \int_{K_1^c} |x_1|^p\, \mu(dx_1)+c_p |a_0|^p\,\mu(K_1^c)+c_p \int_{K_1^c}\int |x_2|^p\,\pi_{x_1}(dx_2)\mu(dx_1)\nonumber \\
&\quad +c_p\mu(K_1^c)\int |x_2|^p\, \pi_{a_0}(dx_2)+(\kappa/6)^p +(\kappa/6)^p\bigg)^{1/p}\le \kappa.
\end{align}
This concludes the first approximation step.\\

For the second approximation step we first fix $x_1\in K_{1,\kappa}\cup \{a_0\}$. We now approximate the probability measure $\pi_{x_1}$: as $\int |x_2|^p\, \nu(dx_2)<\infty$ there exists a finite set $K_{2,\kappa}(x_1)=\{b_1(x_1), b_2(x_1), \dots, b_{N(x_1)}(x_1)\}\subseteq \R$ such that $b_1(x_1)\le b_2(x_1)\le \dots \le b_{N(x_1)}(x_1)$ and $$\inf_{x_2\in K_{2,\kappa}(x_1)}|x_2-y_2|\le \kappa/3$$ for all $y_2\in [b_{1(x_1)},b_{N(x_1)}]$ and $\int_{[b_1(x_1),b_N(x_1)]^c} |x_2|^p\,  \pi_{x_1}(dx_2)\le \kappa^p/(3c_p)$. 
Let us set $b_0(x_1)\coloneq -\infty,\  b_{N(x_1)+1}\coloneq \infty$ with the convention that $[-\infty, b_1(x_1))\coloneq (-\infty, b_1(x_1))$ . By Tchakaloff's theorem as stated in Lemma \ref{lem:tchakaloff} there exist finitely supported measures $\{\hat{\pi}^{i,x_1}\ : \ i=1, \dots, N(x_1)+1\}$ such that
\begin{align*}
\text{supp}\left(\hat{\pi}^{i,x_1}\right)&\subseteq \Gamma_{x_1}\cap [b_{i-1}(x_1),b_i(x_1)),\\
\hat{\pi}^{i, x_1}([b_{i-1}(x_1),b_i(x_1))&=\pi_{x_1}([b_{i-1}(x_1),b_i(x_1)),\\
\int x_2\, \hat{\pi}^{i,x_1}(dx_2)&=\int_{[b_{i-1}(x_1),b_i(x_1))} x_2\, \pi_{x_1}(dx_2),\\
\int |x_2|^p\, \hat{\pi}^{i,x_1}(dx_2)&=\int_{[b_{i-1}(x_1),b_i(x_1))} |x_2|^p \, \pi_{x_1}(dx_2)\quad\text{for all }i=1,\dots, N(x_1)+1.
\end{align*} 
We set
\begin{align*}
\hat{\pi}(dx_1,dx_2)&=\tilde{\pi}^1(dx_1)\Bigg(\sum_{i=1}^{N(x_1)+1} \hat{\pi}^{i,x_1}(dx_2)\Bigg),
\end{align*}
which yields in particular
\begin{align} \label{eq:martingale}
\begin{split}
\int x_2 \, \hat{\pi}_{x_1}(dx_2)&=\sum_{i=1}^{N(x_1)+1} \int x_2\, \hat{\pi}^{i,x_1}(dx_2)\\
&=\sum_{i=1}^{N(x_1)+1} \int_{[b_{i-1}(x_1),b_i(x_1))} x_2\, \pi_{x_1}(dx_2)=\int x_2\,\pi_{x_1}(dx_2)
\end{split}
\end{align}
and
\begin{align}\label{eq:long2}
W^{nd}_p(\tilde{\pi},\hat{\pi})&\le \left(\int \left( \inf_{\gamma^2\in \Pi(\tilde{\pi}_{x_1}, \hat{\pi}_{x_1})} \int |x_2-y_2|^p\,\gamma^2(dx_2,dy_2) \right) \tilde{\pi}^{1}(dx_1)\right)^{1/p}\nonumber\\
&\le \Bigg(\int \bigg(c_p \int_{(-\infty,b_1(x_1))} \left|x_2\right|^p\,\pi_{x_1}(dx_2)+c_p \int_{(-\infty,b_1(x_1))} \left|x_2\right|^p\,\hat{\pi}^{i,x_1}(dx_2)\nonumber \\
&\qquad+\sum_{i=2}^{N(x_1)}\int_{[b_{i-1}(x_1),b_i(x_1))} \left|x_2-y_2\right|^p\,\frac{(\hat{\pi}^{i,x_1}\times\pi_{x_1})(dx_2,dy_2)}{\pi_{x_{1}}\left(\left[b_{i-1}\left(x_{1}\right), b_{i}\left(x_{1}\right)\right)\right)}\nonumber\\
&\qquad +c_p \int_{[b_N(x_1),\infty)} \left|x_2\right|^p\,\pi_{x_1}(dx_2)+c_p \int_{[b_N(x_1),\infty)} \left|x_2\right|^p\,\hat{\pi}_{\tilde{b}_{N}(x_1)}(dx_2) \bigg)\, \tilde{\pi}^{1}(dx_1)\Bigg)^{1/p}\nonumber\\
&\le \kappa.
\end{align}
This concludes the second approximation step. In particular the estimates above imply that $W_p^{nd}(\pi, \hat{\pi})\le W_p^{nd}(\pi, \tilde{\pi})+W_p^{nd}(\tilde{\pi}, \hat{\pi})\le 2\kappa$ and $\hat{\pi}$ is a probability measure finitely supported on $\Gamma$. \\

We now show \eqref{eq:nachtrag} for $\hat{\pi}$. 
First, we again note that $\tilde{\pi}^1=\hat{\pi}^1$ by definition and by the above construction 
$$\int\left(x_{2}-x_{1}\right) \hat{\pi}_{x_{1}}\left(d x_{2}\right)= \int\left(x_{2}-x_{1}\right) \tilde{\pi}_{x_{1}}\left(d x_{2}\right).$$ Thus we obtain for $x\in \mathrm{supp}((\hat{\pi})^1)$
\begin{align*}
& \int_{\{x_1\ge x\}} (x_2-x_1)\,\hat{\pi}(dx_1,dx_2)-\int_{\{x_1\ge x\}} (x_2-x_1)\,\tilde{\pi}(dx_1,dx_2)\\
&= \int_{\{x_1 \ge x\}} \int (x_2-x_1)\, \hat{\pi}_{x_1}(dx_2)\,\tilde{\pi}^1(dx_1)-\int_{\{x_1 \ge x\}}\int (x_2-x_1)\, \tilde{\pi}_{x_1}(dx_2)\,\tilde{\pi}^1(dx_1)\\
&=\int_{\{x_1 \ge x\}}  \left(\int  x_2\, \hat{\pi}_{x_1}(dx_2)-\int x_2\, \tilde{\pi}_{x_1}(dx_2)\right)\,\tilde{\pi}^1(dx_1)=0.
\end{align*}
Similarly using \eqref{eq:long1} we obtain (recalling the definition of $f^\kappa$ in \eqref{eq:fkappa})
\begin{align*}
& \int_{\{x_1\ge x\}} (x_2-x_1)\,\tilde{\pi}(dx_1,dx_2)-\int_{\{x_1\ge x\}} (x_2-x_1)\,\pi(dx_1,dx_2)\\
&\ge -\int_{K_1^c} \left(|x_1|+|f^{\kappa}(x_1)|+\int |x_2|+|y_2|\,\gamma^2(dx_2,dy_2)\right)\,\pi^1(dx_1)\\
&\quad-\int_{\{x_1 \ge x\}\cap K_1} \left(|x_1-f^{\kappa}(x_1)| +\int |x_2-y_2|\, \gamma^2(dx_2,dy_2)\right)\,\pi^1(dx_1)\ge -\kappa.
\end{align*}
Thus
\begin{align*}
& \int_{\{x_1\ge x\}} (x_2-x_1)\,\hat{\pi}(dx_1,dx_2)-\int_{\{x_1\ge x\}} (x_2-x_1)\,\pi(dx_1,dx_2)\\
&= 
 \int_{\{x_1\ge x\}} (x_2-x_1)\,\hat{\pi}(dx_1,dx_2)-\int_{\{x_1\ge x\}} (x_2-x_1)\,\tilde{\pi}(dx_1,dx_2)\\
&\quad + \int_{\{x_1\ge x\}} (x_2-x_1)\,\tilde{\pi}(dx_1,dx_2)-\int_{\{x_1\ge x\}} (x_2-x_1)\,\pi(dx_1,dx_2)\ge -\kappa.
\end{align*}
This concludes the proof of (i).\\

For (ii) we note that $$\mu\left(\left\{x_1\in \R \ : \ \int (x_2-x_1)\,\pi_{x_1}(dx_2)=0\right\}\right)=1$$ for $\pi\in \mathcal{M}(\mu,\nu)$. We can thus proceed as in (i) on $\Gamma^1\cap \left\{x_1\in \R \ : \ \int (x_2-x_1)\,\pi_{x_1}(dx_2)=0\right\}$, noting that \eqref{eq:martingale} holds. This concludes the proof.
\end{proof}

\begin{proof}[Proof of Lemma \ref{cor:approx}]
Let us adopt the same notation and conventions as in the proof of Lemma \ref{lemma:approx1}. Let us  note that $\mu \preceq_c \nu$ implies $\mathcal{M}(\mu,\nu)\neq \emptyset$ and let us fix a martingale measure $\dot{\pi}\in \mathcal{M}(\mu,\nu)$.  Similarly to the proof of Lemma \ref{lemma:approx1} we also fix $0\neq a_0\in \Gamma^1$ satisfying $ \int |x_2|^p \,\pi_{a_0}(dx_2)\vee  \int |x_2|^p \,\dot{\pi}_{a_0}(dx_2)<\infty.$ Then, applying Lusin's theorem to the measure $\zeta$ defined via 
\begin{align*}
\zeta(A)\coloneq\frac{1}{4}\Bigg(\mu(A)+\frac{\int_{A} |x_1|^p\,\mu(dx_1)}{\int |x_1|^p\,\mu(dx_1)}&+\frac{\int_A\int |x_2|^p \, \pi_{x_1}(dx_2)\,\mu(dx_1)}{\int |x_2|^p\,\nu(dx_2)}\\
&\qquad\qquad\qquad+ \frac{\int_A\int |x_2|^p \, \dot{\pi}_{x_1}(dx_2)\,\mu(dx_1)}{\int |x_2|^p\,\nu(dx_2)}\Bigg)
\end{align*} for every Borel set $A\subseteq \R$,
we can find a compact set $K_1$ such that $a_0\in K_1$,
\begin{align*}
\int_{K_1^c} |x_1|^p  \mu(dx_1)&\le \kappa/(6c_p),\quad \int_{K_1^c}\int |x_2|^p \, \pi_{x_1}(dx_2)\mu(dx_1)\le \kappa/(6c_p),\\
\int_{K_1^c}\int |x_2|^p \, \dot{\pi}_{x_1}(dx_2)\mu(dx_1)&\le \kappa/(6c_p),\quad
\mu(K_1^c)\le \frac{ \kappa/(6c_p)}{|a_0|\vee \int |x_2|^p \, \pi_{a_0}(dx_2)\vee \int |x_2|^p \, \dot{\pi}_{a_0}(dx_2)}
\end{align*} 
and both $x_1\mapsto \pi_{x_1}$ and $x_1\mapsto \dot{\pi}_{x_1}$ are continuous in $W_p$ on $K_1$. Now we proceed exactly as in the proof of Lemma \ref{lemma:approx1} for $\Gamma=\R^2$: we conclude from the above  that there exists $\delta >0$ such that $$W_p(\pi_{x_1}, \pi_{y_1})\vee W_p(\dot{\pi}_{x_1}, \dot{\pi}_{y_1})\le \kappa/6$$ for all $x_1, y_1\in K_1$ with $|x_1-y_1|\le \delta$ and a finite set $K_{1,\kappa}=\{a_0,a_1, \dots, a_N\}$ of $K_1$ 
such that $a_1\le \dots\le a_N$ and $$\min_{a\in K_{1,\kappa},\ a\le y_1}(y_1-a)\le \kappa/6\wedge \delta$$ for all $y_1\in K_1$,  where $a_1$ and $a_N$ are the left and right end-points of $K_1$. Let $(P_i)_{i=0}^{N}$ be a disjoint partition of $K_1$ given by
\begin{align*}
P_i=\left\{ y_1\in K_1:\ a_i= \mbox{argmin}_{a\in K_{1,\kappa},\ a\le y_1} (y_1-a) \right\}, \qquad i=0, \dots, N.
\end{align*}
We now define 
\begin{align}\label{eq:fkappa}
f^{\kappa}(x_1)=a_0\mathds{1}_{K_1^c}(x_1)+\sum_{i=0}^{N} a_i\mathds{1}_{P_i}(x_1).
\end{align}
Applying now the second approximation step in the proof of Lemma \ref{lemma:approx1} for both $\pi$ and $\dot{\pi}$ individually and using the same estimates as in the proof of Lemma \ref{lemma:approx1} we can thus find finitely supported measures, which we call $\hat{\pi} \in \Pi(\bar{\mu}, \hat{\nu})$ and $\pi'\in \mathcal{M}(\bar{\mu},\bar{\nu})$, with the property $W_p^{nd}(\pi,\hat{\pi})\le 2\kappa$ and $W_p^{nd}(\dot{\pi},\pi')\le 2\kappa$. In particular $\bar{\mu}\preceq_c \bar{\nu}$ and $\hat{\pi},\pi'$ have the same first marginals. We note that $$W_p(\hat{\nu},\bar{\nu})\le W_p(\hat{\nu},\nu)+W_p(\nu,\bar{\nu})\le 4\kappa.$$ Let $\zeta$ be an optimal coupling for $W_p(\hat{\nu},\bar{\nu})$. We define
\begin{align*}
\bar{\pi}(dx_1,dx_2)=\int \hat{\pi}(dx_1,dy_2) \zeta_{y_2}(dx_2)\in \Pi(\bar{\mu},\bar{\nu})
\end{align*}
and conclude
\begin{align*}
W_p^{nd}(\pi,\bar{\pi}) 
&\le W_p^{nd}(\pi,\hat{\pi})+W_p^{nd}(\hat{\pi},\bar{\pi})
\le 2\kappa+\left(\int |x_2-y_2|^p\,\zeta(dx_2,dy_2)\right)^{1/p}\le 6\kappa.
\end{align*}
This proves the claim.
\end{proof}

\begin{proof}[Proof of Lemma \ref{lemma:convexity}]
Define $\bar{\mu}=1/n\sum_{i=1}^n \mu_i$. Then $\mu_i \ll \bar{\mu}$ for all $i=1,\dots, n$ and by the Radon-Nykodym theorem there exist densities $g^1, \dots, g^n$ such that $d\mu^i =g^i d\bar{\mu}$ for all $i=1, \dots, n$. Next we note that a disintegration of $\frac{1}{n} \sum_{i=1}^n \pi^i$ is given by
\begin{align*}
\left(\frac{\frac{1}{n}\sum_{i=1}^n g^i(x_1) \pi_{x_1}^i(dx_2)}{\frac{1}{n} \sum_{i=1}^n g^i(x_1)}\right)_{x_1:\ \sum_{i=1}^n g^i(x_1)>0}
\end{align*}
and similarly 
\begin{align*}
\left(\frac{\frac{1}{n}\sum_{i=1}^n g^i(x_1) \tilde{\pi}_{ x_1}^i(dx_2)}{\frac{1}{n} \sum_{i=1}^n g^i(x_1)}\right)_{x_1:\ \sum_{i=1}^n g^i(x_1)>0}
\end{align*}
is a disintegration of $\frac{1}{n} \sum_{i=1}^n \tilde{\pi}^i$. Given $\gamma^{2,i}_{x_1}\in \Pi(\pi_{x_1}^i, \tilde{\pi}_{ x_1}^i)$ for $i=1,\dots, n$ and all  $x_1\in \cup_{i=1}^n \text{supp}(\mu^i)$ we thus conclude
\begin{align*}
\frac{\frac{1}{n}\sum_{i=1}^n g^i(x_1) \gamma^{2,i}_{x_1}(dx_2,dy_2)}{\frac{1}{n} \sum_{i=1}^n g^i(x_1)}\in \Pi\left(\left(\frac{1}{n} \sum_{i=1}^n \pi^i\right)_{x_1}, \left(\frac{1}{n} \sum_{i=1}^n \tilde{\pi}^i\right)_{x_1}\right).
\end{align*}
In particular choosing $\gamma^{2,i}_{x_1}\in \Pi(\pi_{x_1}^i, \tilde{\pi}_{ x_1}^i)$ optimal for $W_1(\pi^i_{x_1}, \tilde{\pi}^i_{x_1})$ it follows
\begin{align*}
W_1^{nd}\left(\frac{1}{n}\sum_{i=1}^n \pi^i,\frac{1}{n}\sum_{i=1}^n \tilde{\pi}^i\right)&\le
 \int \int |x_2-y_2|\,\frac{\frac{1}{n}\sum_{i=1}^n g^i(x_1) \gamma^{2,i}_{x_1}(dx_2,dy_2)}{\frac{1}{n} \sum_{i=1}^n g^i(x_1)}\,\left( \frac{1}{n}\sum_{i=1}^n g^i(x_1)\right)\bar{\mu}(dx_1)\\
 &= \frac{1}{n}\sum_{i=1}^n \int W_1(\pi^i_{x_1}, \tilde{\pi}^i_{x_1})\,\mu^i(dx_1),
\end{align*}
which proves the claim.
\end{proof}

\begin{proof}[Proof of Lemma \ref{lemma:approx2a}]
Let us denote by $\zeta\in \Pi(\mu, \tilde{\mu})$ an optimal coupling for $W_p(\mu, \tilde{\mu})$ and by $\eta\in \Pi(\nu,\tilde{\nu})$ an optimal coupling for $W_p(\nu,\tilde{\nu})$. Now we define $\hat{\rho}\in \mathcal{P}(\R^4)$ via $$\hat{\rho}(dx_1,dx_2,dy_1,dy_2)=\zeta_{x_1}(dy_1)\,\eta_{x_2}(dy_2)\,\pi(dx_1,dx_2).$$
Let $$\tilde{\pi}(dy_1,dy_2)\coloneq \int_{\R\times \R}\hat{\rho}(dx_1,dx_2,dy_1,dy_2)$$
be its projection to the third and fourth component. We compute
\begin{align*}
W_p(\pi,\tilde{\pi})&\le \left( \int |x_1-y_1|^p+|x_2-y_2|^p \,\hat{\rho}(dx,dy)\right)^{1/p}\\
&=\left( \int \left(|x_1-y_1|^p+|x_2-y_2|^p\right) \,\zeta_{x_1}(dy_1)\eta_{x_2}(dy_2)\pi(dx_1, dx_2)\right)^{1/p}\\
&= \left( W_p^p(\mu, \tilde{\mu})+W_p^p(\nu, \tilde{\nu})\right)^{1/p}\\
&\le W_p(\mu, \tilde{\mu})+W_p(\nu, \tilde{\nu}).
\end{align*}
The proof of \eqref{eq:gaoyue2} follows by use of the triangle inequality and Jensen's inequality as in \cite[proof of Prop. 4.2, p.20]{Guo:2017txa}: indeed, choosing $p=1$ yields
\begin{align*}
&\int \left|\int (y_2-y_1)\,\tilde{\pi}_{y_1}(dy_2)\right|\,\tilde{\mu}(dy_1)\\
&= \int \left|\int (y_2-y_1)\,\hat{\rho}_{y_1}(dx_1,dx_2,dy_2)\right|\,\tilde{\mu}(dy_1)\\
&\le \int \left|\int (y_2-x_2)\,\hat{\rho}_{y_1}(dx_1,dx_2,dy_2)\right|\,\tilde{\mu}(dy_1)
 + \int \left|\int (x_2-x_1)\,\hat{\rho}_{y_1}(dx_1, dx_2,dy_2)\right|\,\tilde{\mu}(dy_1)\\
&\quad + \int \left|\int (x_1-y_1)\,\hat{\rho}_{y_1}(dx_1,dx_2,dy_2)\right|\,\tilde{\mu}(dy_1)\\
&\le \int \left|y_2-x_2\right|\hat{\rho}_{y_1}(dx_1,dx_2,dy_2)\,\tilde{\mu}(dy_1)+\int \left|\int (x_2-x_1)\,\zeta_{y_1}(dx_1)\pi_{x_1}(dx_2)\right|\,\tilde{\mu}(dy_1)\\
&\quad +\int  \left|x_1-y_1\right|\, \hat{\rho}_{y_1}(dx_1,dx_2,dy_2)\,\tilde{\mu}(dy_1)\\
&\le \int \left|y_2-x_2\right|\,\eta(dx_2, dy_2)+0 + \int  \left|x_1-y_1\right|\,\zeta(dx_1,dy_1)\\
&= W_1(\nu,\tilde{\nu})+W_1(\mu, \tilde{\mu}).
\end{align*}
This concludes the proof.
\end{proof}

\begin{proof}[Proof of Lemma \ref{lemma:uniform_integrability}]
Using e.g. an argument very similar to \cite[Def. 6.8]{Villani:2009ha}, the statement
\begin{align*}
\lim_{n \to \infty} \left( \int c(x_1, x_2)\,\pi^n(dx_1, dx_2)- \int c(x_1, x_2)\, \tilde{\pi}^n(dx_1, dx_2)\right)=0
\end{align*}
for all continuous functions $c:\R^2\to \R$ such that $|c(x_1, x_2)|\le C(1+|x_1|^p+|x_2|^p)$ with $C\ge 0$
is equivalent to $\lim_{n\to \infty} W_p(\pi^n, \hat{\pi}^n)=0$, which is in turn equivalent to $\lim_{n\to \infty} W_1(\pi^n, \hat{\pi}^n)=0$ and the convergence of $p$th moments of $(\pi^n)_{n\in \N}$ and $(\hat{\pi}^n)_{n\in \N}$. But $\pi^n$ and $\tilde{\pi}^n$ have the same moments, which converge because of the assumption $\lim_{n\to \infty} W_p(\mu^n, \mu)=0=\lim_{n\to \infty} W_p(\nu^n, \nu)$. The claim follows.
\end{proof}

\begin{proof}[Proof of Lemma \ref{lem:aap_1}]
Take $\epsilon>0$. From the definition of $W_{nd}^1(\pi,\tilde{\pi})$ we can find $\gamma^1\in \Pi(\pi^1, \tilde{\pi}^1)$ and a (Borel-measurable) $\gamma^2_{(x_1,y_1)}\in \Pi(\pi_{x_1},\tilde{\pi}_{y_1})$ such that
\begin{align*}
W_1^{nd}(\pi,\tilde{\pi})&\ge   \int\left(|x_1-y_1|+\int |x_2-y_2|\,\gamma^2_{(x_1,y_1)}(dx_2, dy_2)\right)\, \gamma^1(dx_1,dy_1)-\epsilon.
\end{align*}
Now we simply write
\begin{align*}
&\left|\int \left|\int (x_2-x_1)\,\pi_{x_1}(dx_2) \right|\,\pi^1(dx_1)- \int \left| \int (y_2-y_1)\,\tilde{\pi}_{y_1}(dy_2)\right|\,\tilde{\pi}^1(dy_1)\right|\\
&= \left|\int\left(\left|\int (x_2-x_1)\,\gamma^2_{(x_1,y_1)}(dx_2,dy_2) \right|-  \left| \int (y_2-y_1)\,\gamma^2_{(x_1,y_1)}(dx_2,dy_2)\right|\right)\,\gamma^1(dx_1,dy_1)\right|\\
&\le \int \left|\int \left[(x_2-x_1)-(y_2-y_1)\right]\,\gamma^2_{(x_1,y_1)}(dx_2,dy_2)\right|\,\gamma^1(dx_1,dy_1)\\
&\le \int \left( |x_1-y_1| +\int |x_2-y_2|\,\gamma^2_{(x_1,y_1)}(dx_2,dy_2) \right)\,\gamma^1(dx_1,dy_1)\\
&\le W_1^{nd}(\pi,\tilde{\pi})+\epsilon.
\end{align*}
As $\epsilon>0$ was arbitrary, the claim follows. For the second claim we note that the triangle inequality also holds for $(\cdot)^-$ and $(\cdot)^-\le |\cdot|$, so the claim still follows.
\end{proof}

\begin{proof}[Proof of Lemma \ref{lem:aap2}]
Clearly $\pi\in \Pi(\mu,\nu)$. Fix now $\epsilon>0$. Lemma \ref{lem:aap_1} states that
\begin{align*}
\left|\int \left(\int (y_2-y_1)\,\pi^n_{y_1}(dy_2) \right)^-\,(\pi^n)^1(dy_1)- \int \left( \int (x_2-x_1)\,\pi_{x_1}(dx_2)\right)^-\,\mu(dx_1)\right|\le W_1^{nd}(\pi^n,\pi),
\end{align*}
in particular 
\begin{align*}
 \int \left( \int (x_2-x_1)\,\pi_{x_1}(dx_2)\right)^-\,\mu(dx_1)\le 2\epsilon
\end{align*}
for large $n\in \N$.
As $\mu\preceq_c \nu$ and $\pi\in \Pi(\mu,\nu)$ we have
\begin{align*}
0&= \int (x_2-x_1)\,\pi(dx_1,dx_2) \\
&=\int \left( \int (x_2-x_1)\,\pi_{x_1}(dx_2)\right)^+\,\mu(dx_1)-\int \left( \int (x_2-x_1)\,\pi_{x_1}(dx_2)\right)^-\,\mu(dx_1).
\end{align*}
In particular
\begin{align*}
\int \left| \int (x_2-x_1)\,\pi_{x_1}(dx_2)\right|\,\mu(dx_1)\le 4\epsilon.
\end{align*}
As $\epsilon>0$ was arbitrary, the claim now follows.
\end{proof}

\end{appendix}

\bibliographystyle{apalike}
\bibliography{paperslib}

\end{document}